\newcommand{\multi}[2]{\bigg(\!\!\dbinom{#1}{#2}\!\!\bigg)}
\def\subsection{\@startsection{subsection}{3}%
\z@{.9\linespacing\@plus.7\linespacing}{.1\linespacing}%
{\normalfont\bfseries}}
\title[CRT for Partitions]{A Chinese Remainder Theorem for Partitions} 
\author{Seethalakshmi K and Steven Spallone \\
 \bigskip
\today}
\theoremstyle{definition}
\newtheorem{theorem}{Theorem}
\newtheorem{lemma}{Lemma}
\newtheorem{prop}{Proposition}
\newtheorem{defn}{Definition}
\newcommand{\nc}{\newcommand}
\newtheorem{ex}{{\bf Example}}
\nc{\thm}{\theorem}
\nc{\cor}{\corollary}
\nc{\mc}{\mathcal}
\nc{\mb}{\mathbb}
\nc{\mf}{\mathfrak}
\nc{\ul}{\underline}
\nc{\ol}{\overline}
\nc{\N}{\mb N}
\nc{\R}{\mb R}
\nc{\Z}{\mb Z}
\nc{\Q}{\mb Q}
\nc{\C}{\mb C}
\newcommand{\Zb}{\mb Z / b \mb Z}
\newcommand{\Zd}{\mb Z / d \mb Z}
\newcommand{\Zt}{\mb Z / t \mb Z}
\newcommand{\Zs}{\mb Z / s \mb Z}
\newcommand{\Zm}{\mb Z / m \mb Z}
\newcommand{\SdTdk}{\inlinebnm{S_j \times T_j}{k_j}}
\newcommand{\Sdk}{\inlinebnm{S_j}{k_j}}
\newcommand{\Tdk}{\inlinebnm{T_j}{k_j}}
\newcommand{\SkDkTk}{\inlinebnm{S}{k} \times_{\inlinebnm{D}{k}} \inlinebnm{T}{k}}
\newcommand{\SDTk}{\inlinebnm{S \times_D T}{k}}
\newcommand{\STk}{\inlinebnm{S \times T}{k}}
\newcommand{\Mk}{\inlinebnm{M}{k}}
\newcommand{\Sk}{\inlinebnm{S}{k}}
\newcommand{\Tk}{\inlinebnm{T}{k}}
\newcommand{\Dk}{\inlinebnm{D}{k}}
\newcommand{\ms}{\mathscr}
\newcommand{\A}{\mathscr{A}}
\newcommand{\Nsk}{N_{\sigma}(k)}
\newcommand{\Nstk}{N_{\sigma, \tau}(k)}
\newcommand{\Zak}{\inlinebnm{\mb Z/a \mb Z}{k}}
\newcommand{\Zbk}{\inlinebnm{\mb Z/b \mb Z}{k}}
\newcommand{\Zmk}{\inlinebnm{\mb Z/m \mb Z}{k}}
\newcommand{\Ztk}{\inlinebnm{\mb Z/t \mb Z}{k}}
\newcommand{\Zsk}{\inlinebnm{\mb Z/s \mb Z}{k}}
\newcommand{\conv}{\mathrm{conv}}
\nc{\W}{\ul \N}
\newcommand{\inlinebnm}[2]{\ensuremath{\big(\!\tbinom{#1}{#2}\!\big)}}
\theoremstyle{definition}
\newtheorem{remark}{Remark}
\nc{\dmo}{\DeclareMathOperator}
\nc{\mat}[4]{
\begin{pmatrix}
#1 & #2 \\
#3 & #4
\end{pmatrix}
}
\dmo{\An}{A}
\dmo{\ASpan}{ASpan}
\dmo{\Span}{Span}
\dmo{\id}{id}
\dmo{\Char}{char}
 \dmo{\cone}{cone}
\dmo{\Ker}{Ker} 
\dmo{\val}{val} 
\dmo{\ord}{ord}
\dmo{\pr}{pr}
\dmo{\I}{I}
\dmo{\II}{II}
\dmo{\Match}{Match}
\dmo{\odd}{odd}
\dmo{\sgn}{sgn}
\nc{\beq}{\begin{equation*}}
\nc{\eeq}{\end{equation*}}
\nc{\half}{\frac{1}{2}}
\dmo{\Mod}{mod}
\dmo{\core}{core}
\dmo{\Core}{Core}
\dmo{\res}{res}
\dmo{\lin}{lin}
\dmo{\lcm}{lcm}
\dmo{\vol}{vol}
\dmo{\fin}{fin}
\dmo{\M}{Mat}
\nc{\la}{\lambda}
\nc{\lip}{\langle}
\nc{\rip}{\rangle}
\dmo{\supp}{supp}
\dmo{\Ima}{Im}
\dmo{\Res}{Res}
\dmo{\Ind}{Ind}
\dmo{\tr}{tr}
\dmo{\Sym}{Sym}
\dmo{\reg}{reg}
\dmo{\End}{End}
\dmo{\Hom}{Hom}
\dmo{\im}{im}
\address{Mathematical Sciences Institute, The Australian National University, Canberra, Australia}    
\email{seethalakshmi.kayanattath@anu.edu.au}
\address{Indian Institute of Science Education and Research, Pune-411021,Maharashtra,India}
\email{sspallone@gmail.com}
\date{\today}
\keywords{t-core partitions, Ehrhart's Theorem, transportation polytopes}
\begin{document}
\maketitle
\begin{center} \date{}
\end{center}
\begin{abstract}
Let $s,t$ be natural numbers, and fix an $s$-core partition $\sigma$ and a $t$-core partition $\tau$. Put $d=\gcd(s,t)$ and $m=\lcm(s,t)$, and write $\Nstk$ for the number of $m$-core partitions of length no greater than $k$ whose $s$-core is $\sigma$ and $t$-core is $\tau$. We prove that for $k$ large, $\Nstk$ is a quasipolynomial of period $m$ and degree $\frac{1}{d}(s-d)(t-d)$.
\end{abstract}
\tableofcontents

\begin{comment}
\footnote{
0. Include reference in introduction for simultaneous cores, Ehrharts Theorem\\
1. Corrected Thm 4 notation error\\
2. Changed Proposition 7 to Theorem 7\\
3. Thm 5 quasipolynomial proof\\
4. Connection btw Lemma 5 and Farkas' Lemma\\
5. Row operation preserves TU?\\
6. core surjective reference\\
7. degree of quasipolynomial-Section 4.4\\
8. Include s,t relatively prime in Thm 1\\
9. Lemma 8, thm 9 -P(A,B)corrected notation error\\
10. Corrected indentation error in example 9\\
11.Birkoff polytope volume-page 23 remark 3?}
\end{comment}
\section{Introduction}

For a partition $\lambda$ and a natural number $t$, the $t$-core of $\la$, written $\core_t \la$, is obtained by removing hooks of size $t$ from the Young diagram of $\la$, until no more can be removed. This analogue of the Division Algorithm has its origins in the representation theory of the symmetric group \cite{Nakayama}, and finds application in the study of the partition function \cite{wildon2008counting}. We present an analogue of the Chinese Remainder Theorem in this paper.

Write $\mc C_t$ for the set of $t$-cores.  Suppose $s, t \in \mb N$ are relatively prime, and consider the map 
\beq
\core_{s, t}: \mc C_{st} \to \mc C_s \times \mc C_t
\eeq
taking $\la$ to $(\core_s \la,\core_t \la)$. This map $\core_{s,t}$ is surjective (\cite{fayers2014generalisation}, Section 5.1), but far from injective. In fact the fibres are infinite. To capture their behavior we stratify them by length as follows.

 Given a partition $\la$, write $\ell(\lambda)$ for its length, meaning the number of parts of $\la$.   For a fixed $(\sigma, \tau) \in \mc C_s \times \mc C_t$, let $m =  st$ and put
 \begin{equation} \label{defn.of.N}
\Nstk= \#\{ \lambda \in \mc C_{m} \mid \core_s \la=\sigma, \: \core_t \la= \tau, \: \ell(\lambda) \leq k \}.
\end{equation}
In other words, $\Nstk$ is the cardinality of the $k$th stratum,
\begin{equation}
\label{eq: fibre of core s,t}
\core_{s,t}^{-1}(\sigma, \tau) \cap \mc C_m^k
\end{equation}
where $\mc C_m^k$ is the set of $m$-cores of length no greater than $k$.

Our first result is:
\begin{thm} \label{main.thm.intro} Let $s, t \in \mb N$ be relatively prime.  There is a quasipolynomial $Q_{\sigma,\tau}(k)$ of degree $(s-1)(t-1)$ and period $st$, so that for integers
 $k \gg 0$, we have $\Nstk=Q_{\sigma,\tau}(k)$.  The leading coefficient of $Q_{\sigma,\tau}(k)$ is a positive number $V_{s,t}$ depending only on $s$ and $t$.
 \end{thm}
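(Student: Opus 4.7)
The plan is to exhibit $\Nstk$ as the lattice-point count of a rational polytope whose shape grows linearly with $k$, and then appeal to Ehrhart's theorem. I would start from the abacus description of $st$-cores: an $st$-core $\la$ is parametrized by a vector $(n_i)_{i=0}^{st-1} \in \mb Z^{st}$ of \emph{charges} on the $st$ runners, satisfying $\sum_i n_i = 0$. Since $\gcd(s,t) = 1$, the classical Chinese Remainder Theorem $\mb Z/st\mb Z \cong \mb Z/s\mb Z \times \mb Z/t\mb Z$ reshapes $(n_i)$ as an $s \times t$ integer matrix $(n_{ij})$, and the combinatorial ``Chinese Remainder Theorem for partitions'' named in the title should manifest here as: the $s$-core of $\la$ is recorded by the row sums of $(n_{ij})$ and the $t$-core by its column sums.

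Fixing $\sigma$ and $\tau$ thus fixes integer vectors $r \in \mb Z^s$ and $c \in \mb Z^t$ with $\sum_i r_i = \sum_j c_j = 0$, and the $st$-cores with $s$-core $\sigma$ and $t$-core $\tau$ correspond bijectively to the integer matrices with row sums $r$ and column sums $c$. The abacus further supplies a length formula $\ell(\la) = \max_{i = 0}^{st - 1}(-i - st \, n_i)$, derived by locating the smallest nonnegative integer absent from the beta-set of $\la$, so the condition $\ell(\la) \le k$ becomes the finite system of linear inequalities $st \, n_i + i \ge -k$ for $i = 0, \ldots, st - 1$. Combining the affine row/column-sum equations with these inequalities produces a rational polytope $P_k = P_k(\sigma, \tau) \subset \mb R^{st}$ whose integer points are in bijection with the partitions counted by $\Nstk$.

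Rewriting $n_i \ge -(k+i)/(st)$ exposes the $k$-dependence as a dilation: absorbing the $\sigma, \tau$-dependence of the row and column sums into a constant shift, one has $P_k = k P + w_{\sigma, \tau}$, where
\[
P = \bigl\{\, x \in \mb R^{st} : \text{all row and column sums of } x \text{ vanish, and } x_i \ge -1/(st) \text{ for all } i \,\bigr\}
\]
is a fixed rational polytope and $w_{\sigma, \tau}$ is a rational translation depending only on $\sigma$ and $\tau$. The row and column sum equations cut out an affine subspace of $\mb R^{st}$ of dimension $(s-1)(t-1)$, within which $P$ is bounded and full-dimensional, and its vertex coordinates lie in $\frac{1}{st} \mb Z$. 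Ehrhart's theorem for translated dilations of rational polytopes then yields that for $k \gg 0$, $\Nstk$ agrees with a quasipolynomial $Q_{\sigma, \tau}(k)$ of degree $(s-1)(t-1)$ and period dividing $st$, with leading coefficient the $(s-1)(t-1)$-dimensional relative volume $V_{s, t} > 0$ of $P$, which depends only on $s$ and $t$.

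The principal obstacle will be installing the abacus-to-matrix dictionary with enough precision to both realize the length formula as the stated maximum of linear functionals in the $n_i$ and identify which integer matrices with prescribed row and column sums genuinely arise from $st$-cores; in particular, one must verify that no positivity or congruence constraint beyond the length inequality is present. Secondary but still essential points include verifying that $P$ is full-dimensional (so the quasipolynomial degree is exactly $(s-1)(t-1)$), pinning down the period as exactly $st$ rather than a proper divisor, and checking that the translation $w_{\sigma, \tau}$ does not alter the leading-order lattice-point count, so that the leading coefficient is genuinely independent of $\sigma$ and $\tau$.
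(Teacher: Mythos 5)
Your proposal follows the same general line as the paper: abacus coordinates, the Chinese Remainder Theorem to reshape the abacus data into an $s \times t$ array whose row/column sums record the $s$- and $t$-cores, and Ehrhart theory applied to the resulting transportation-type polytope. The paper uses a slightly different but equivalent parametrization (multiplicity vectors $H_{\sigma,s}^k$ and $H_{\tau,t}^k$ of residues of the first-column hooklengths, reducing to counting non-negative integer matrices with growing margins $\vec F^k = \vec a + kt\,\vec 1$ and $\vec G^k = \vec b + ks\,\vec 1$), but these are interchangeable.

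However, there is a genuine gap at the step where you claim $P_k = kP + w_{\sigma,\tau}$ for a \emph{fixed} rational polytope $P$ and a \emph{fixed} rational translation $w_{\sigma,\tau}$, and then invoke Ehrhart's theorem for translated dilations. This identity does not hold. Working inside the affine subspace $L$ of charge matrices with the prescribed row and column sums, the constraint $n_i \ge -(k+i)/(st)$ carries an inhomogeneous term $-i/(st)$ whose row sums are $-(a/s + (t-1)/2)$ (for row $a$), which are neither zero nor equal across rows; consequently the vector by which you would need to translate does \emph{not} lie in the direction space of $L$, and one cannot absorb it as a translation inside $L$. After clearing denominators, what one actually obtains is a family of transportation polytopes whose margins have the form $\vec b\,k + \vec c$ with $\vec c$ \emph{not} proportional to $\vec b$; this is an inhomogeneous (affine) family of polytopes, not a translated dilation $kP + w$, and the standard Ehrhart quasipolynomial theorem (even the rational/translated version) does not directly apply. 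This is precisely the ``technical heart'' the paper flags: it proves an extension of Ehrhart's theorem (Theorem~\ref{ext.thm.intro}/Theorem~\ref{totunimod polytope polynomial}) for counting lattice points in $\mc P(A, \vec b k + \vec c)$ when $A$ is totally unimodular, via an inductive variable-elimination argument that preserves total unimodularity (Lemma~\ref{rowoperation totunimod lemma}) and careful handling of degenerate rows (Lemma~\ref{zero.row.lemma}). Your proof needs that extension (or an equivalent), not merely the classical Ehrhart theorem. The secondary points you raise (full-dimensionality of $P$, exactness of the period) are comparatively routine, and the paper does not claim $st$ is the \emph{minimal} period in any case.
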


\bigskip

Here a ``quasipolynomial of period $n$'' is a function on natural numbers whose restriction to each coset $n \N +i$ is a polynomial; see Section ~\ref{sec:Counting Fibres of Core Maps}. The quantity $V_{s,t}$ is the volume of a certain polytope we define in Section ~\ref{sec: proofs of thm 1 and 3}.

\begin{remark}
If $\sigma = \tau$, then $\sigma$ is simultaneously an $s$-core and a $t$-core. In fact, the intersection $\mc C_s \cap \mc C_t$ is finite and well-studied; see  \cite{anderson2002partitions} and \cite{fayers2011t}.
\end{remark}

 Our method is as follows. We associate to $\tau \in \mc C_t^k$ a multiset $H_{\tau,t}^k$ on $\{0,1,\ldots, t-1\}$ of size $k$, corresponding to the first column hook lengths of $\tau$ modulo $t$.  (This is James' theory of abacuses \cite[page 78]{james1984representation}.) Members of \eqref{eq: fibre of core s,t} correspond to matchings between $H_{\sigma,s}^k$ and $H_{\tau,t}^k$. (See Section \ref{sec: Counting fibres of the Sun Tzu map}.)

Generally, let  $F,G$ be multisets of the same size, with multiplicity vectors $\vec F$, $\vec G$.   Write $\mc M(\vec F,\vec G)$ for the polytope of real matrices with nonnegative entries, row margins $\vec F$, and column margins $\vec G$.  Then the matchings between $F$ and $G$ correspond bijectively with the integer points of $\mc M(\vec F,\vec G)$. %(See Proposition \ref{prop: matching matrix bijection} ). See Section \ref{sec: Preliminaries of polytopes and notation}.

In our situation, the polytopes $\mc M$ grow linearly in $k$, and we may apply Ehrhart's theory, which says that if $\mc P$ is a polytope with integer vertices, then the number of integer points in $n \cdot \mc P$ is a polynomial in ~$n$. We refer the reader to Section 4.6.2 of \cite{Stanley}, and Chapter 3 of \cite{beck2007computing}. The degree of the polynomial is the dimension of $\mc P$, and its leading coefficient is the relative volume of $\mc P$.

 When $\sigma=\tau=\emptyset$, and $k$ is a multiple of $st$, this directly gives our result.  The technical heart of this paper is extending Ehrhart's Theorem to all fibres and all $k$.

The polytopes $\mc M(\vec F,\vec G)$ arising from row/column constraints are called ``transportation polytopes''.  Each can be expressed in the form
\beq
\mc P(A,\vec b)=\{ \vec x \mid A \vec x \leq \vec b, \vec x \geq 0 \},
\eeq
for some \emph{totally unimodular matrix} $A$, meaning that all the minors of ~$A$ are either $0,1$, or $-1$.  Write $N(A, \vec b)$ for the number of integer points in the polytope $\mc P(A, \vec b)$. Our extension of Ehrhart's Theorem is: 
  
\begin{theorem}  \label{ext.thm.intro} Let $A$ be an $m \times n$ totally unimodular matrix and $\vec b, \vec c \in \mb Z^m$. Suppose  $A$ does not have any zero rows, and that $\mc P(A, \vec b)$ is bounded of dimension $n$. Then there is a polynomial $f(k)$ so that for integers $k \gg 0$, we have $N(A, \vec bk+\vec c)=f(k)$. Moreover $\deg f=n$ and the leading coefficient of $f$ is the volume of $\mc P(A,\vec b)$.
\end{theorem}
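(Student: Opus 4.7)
The plan is to use total unimodularity to show that the family of polytopes $\mc P_k := \mc P(A, \vec b k + \vec c)$ admits a Minkowski sum decomposition for $k \gg 0$, and then apply McMullen's mixed Ehrhart theorem.

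First, since $A$ is totally unimodular and $\vec b, \vec c$ have integer entries, $\mc P_k$ is an integer polytope for every positive integer $k$. To see this, observe that each vertex of $\mc P_k$ is the unique solution of a nonsingular subsystem $A' \vec v = \vec d'$ drawn from the defining inequalities; Cramer's rule together with $\det A' = \pm 1$ forces $\vec v \in \mb Z^n$.

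Next, for $k$ sufficiently large the combinatorial type of $\mc P_k$ stabilizes: the vertices arise from a fixed collection of $n$-subsets $J$ of facet-defining inequalities, and each vertex is of the form $\vec v_J(k) = k \vec \alpha_J + \vec \beta_J$ with $\vec \alpha_J, \vec \beta_J \in \mb Z^n$. In particular, the normal fan $\Sigma$ of $\mc P_k$ is independent of $k$ for $k \geq k_0$. Under the hypotheses that $A$ has no zero rows and $\mc P(A, \vec b)$ has dimension $n$, one checks that $\Sigma$ also serves as the normal fan of $\mc P(A, \vec b)$. Computing support functions on the rays of $\Sigma$ shows that $h_{\mc P_k}$ depends linearly on $k$ with slope $h_{\mc P(A, \vec b)}$, and since support functions add under Minkowski sums, this yields
\beq
\mc P_k = \mc P_{k_0} + (k - k_0) \, \mc P(A, \vec b), \qquad k \geq k_0.
\eeq

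Finally, I would invoke McMullen's theorem: for integer polytopes $\mc Q_1, \mc Q_2 \subset \mb R^n$ with $\mc Q_2$ full-dimensional, the function $k \mapsto \#((\mc Q_1 + k \mc Q_2) \cap \mb Z^n)$ is a polynomial of degree $n$ in $k \in \mb Z_{\geq 0}$ whose leading coefficient is $\vol(\mc Q_2)$. Applied with $\mc Q_1 = \mc P_{k_0}$ and $\mc Q_2 = \mc P(A, \vec b)$, this produces the required polynomial $f$ of degree $n$ and leading coefficient $\vol(\mc P(A, \vec b))$.

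The main obstacle I anticipate is establishing the Minkowski decomposition above. Matching the normal fans of $\mc P_k$ and $\mc P(A, \vec b)$ rigorously, while also tracking the rays coming from nonnegativity constraints and verifying no degenerate facets appear, requires some case analysis; the hypotheses that $A$ has no zero rows and that $\mc P(A, \vec b)$ is $n$-dimensional are precisely what make this matching clean.
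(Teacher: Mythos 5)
Your route is genuinely different from the paper's, and it's viable, but one step as you've phrased it is imprecise. You claim that the eventual normal fan $\Sigma$ of $\mc P_k$ ``also serves as the normal fan of $\mc P(A,\vec b)$.'' That is not true in general. For example, take
$A = \begin{pmatrix} 1 & 0 \\ 0 & 1 \\ 1 & 1 \end{pmatrix}$, $\vec b = (1,1,1)^t$, $\vec c = (-10,-10,0)^t$: here $\mc P(A,\vec b)$ is a triangle with a $3$-ray normal fan, while $\mc P_k$ is a pentagon for all large $k$, with a $5$-ray normal fan. What is true, and what your argument actually needs, is weaker: for $k \gg 0$ the normal fan of $\mc P_k$ \emph{refines} that of $\mc P(A,\vec b)$, and for each $\vec u$ the support function $h_{\mc P_k}(\vec u)$ is affine in $k$ with slope $h_{\mc P(A,\vec b)}(\vec u)$. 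The cleanest way to see the slope statement is via LP duality: $h_{\mc P_k}(\vec u)$ is a minimum of $(\vec b k + \vec c)\cdot\vec y$ over the vertices $\vec y$ of a cone that does not depend on $k$, and as $k\to\infty$ the optimal vertex is one minimizing $\vec b\cdot\vec y$, i.e.\ giving exactly $h_{\mc P(A,\vec b)}(\vec u)$. With that amendment, your Minkowski decomposition $\mc P_k = \mc P_{k_0} + (k-k_0)\mc P(A,\vec b)$ does hold for $k\ge k_0$, and McMullen's mixed Ehrhart theorem then gives the conclusion.

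For comparison, the paper's proof is entirely elementary and self-contained: it inducts on $n$, and within that on the number of nonzero components of $\vec c$. When $\vec c = 0$ it reduces to ordinary Ehrhart via Hoffman--Kruskal; otherwise it peels off one unit of $c_1$ at a time by introducing a slice $\{a_{11}x_1 + \cdots = b_1k + \ell\}$, eliminates $x_1$ Fourier--Motzkin-style (preserving total unimodularity by Lemma~\ref{rowoperation totunimod lemma}), and counts lattice points in the resulting $(n-1)$-dimensional pieces by the inner induction. Your approach buys brevity and conceptual clarity at the cost of invoking McMullen's theorem, a significantly heavier external tool than anything the paper uses; the paper's buys self-containedness and the ability to see exactly where the no-zero-rows and full-dimensionality hypotheses enter (via Lemma~\ref{zero.row.lemma} and the base case). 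Both are legitimate, but you should correct the normal-fan claim before calling the decomposition established.
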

Note that Ehrhart's Theorem gives the case $\vec c = 0$.
\bigskip

Next, suppose $s$ and $t$ are not relatively prime.  Let $d=\gcd(s,t)$, $m=\lcm(s,t)$ and $\ell_0 = \max(\ell(\sigma), \ell(\tau))$. Again define $\Nstk$ by \eqref{defn.of.N}. 
\begin{theorem}   \label{not.prime.intro}
If $\core_d(\sigma) = \core_d(\tau)$, then there is a quasipolynomial $Q_{\sigma, \tau}(k)$ of degree $\frac{1}{d} (s-d)(t-d)$ and period $m$, so that for integers $k \gg 0$, we have $\Nstk=Q_{\sigma,\tau}(k)$. The leading coefficient of $Q_{\sigma, \tau}(k)$ is $\left(V_{\frac{s}{d},\frac{t}{d}}\right)^d$.
\end{theorem}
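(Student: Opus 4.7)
The plan is to use James' $d$-abacus~\cite{james1984representation} to reduce Theorem~\ref{not.prime.intro} directly to Theorem~\ref{main.thm.intro}. Set $s' = s/d$ and $t' = t/d$, so that $\gcd(s', t') = 1$ and $\lcm(s', t') = m/d =: m'$. For a partition $\lambda$ with $\ell(\lambda) \leq k$, draw its $d$-abacus with $k$ beads: the positions $\{\lambda_i + k - i\}$ distribute onto $d$ runners according to their residues modulo $d$, and the beads on each runner $r$ encode a subpartition $\lambda^{(r)}$. Classical abacus theory yields a bijection between partitions $\lambda$ with prescribed bead distribution $(k_0,\ldots,k_{d-1})$ and tuples $(\lambda^{(0)},\ldots,\lambda^{(d-1)})$ with $\ell(\lambda^{(r)}) \leq k_r$.

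The heart of the proof is to translate each defining condition of $\Nstk$ through this bijection. The hypothesis $\core_d \sigma = \core_d \tau =: \rho$ ensures that the $d$-abacus bead distributions for $\sigma$, $\tau$, and any admissible $\lambda$ coincide; this common distribution $(k_0(\rho,k),\ldots,k_{d-1}(\rho,k))$ depends only on $\rho$ and $k$. Since $d$ divides each of $s, t, m$, removing a hook of any of these sizes corresponds on the $d$-abacus to sliding a single bead within its own runner (by $s'$, $t'$, or $m'$ levels, respectively). One therefore checks that $\lambda \in \mc C_m$ iff each $\lambda^{(r)} \in \mc C_{m'}$; that $\core_s \lambda = \sigma$ iff $\core_{s'} \lambda^{(r)} = \sigma^{(r)}$ for every $r$; and likewise with $t, \tau$ in place of $s, \sigma$. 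Together these identities give the factorization
\begin{equation*}
\Nstk \;=\; \prod_{r=0}^{d-1} N^{(r)}\!\bigl(k_r(\rho, k)\bigr),
\end{equation*}
where $N^{(r)}(j)$ counts $m'$-core partitions of length at most $j$ whose $s'$-core is $\sigma^{(r)}$ and whose $t'$-core is $\tau^{(r)}$---exactly the quantity handled by Theorem~\ref{main.thm.intro} applied to the relatively prime pair $(s',t')$.

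The final step is a short assembly. Adding $d$ beads to the abacus shifts every existing bead up one level within its runner and adds one fresh bead at the bottom of each runner, so $k_r(\rho, k+d) = k_r(\rho, k) + 1$ for every $r$. By Theorem~\ref{main.thm.intro}, each $N^{(r)}(j)$ is eventually a quasipolynomial in $j$ of degree $(s'-1)(t'-1)$, period $m' = s't'$, and leading coefficient $V_{s',t'}$. Composing with $k \mapsto k_r(\rho,k)$---which advances by one each time $k$ advances by $d$---converts each factor into a quasipolynomial in $k$ of period $m'd = m$. Taking the product of the $d$ factors yields a quasipolynomial in $k$ of period $m$, degree $d(s'-1)(t'-1) = \tfrac{1}{d}(s-d)(t-d)$, and leading coefficient equal to the product of the $d$ individual leading terms, recovering the claimed $(V_{s',t'})^d$.

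The main obstacle, I expect, is the careful abacus bookkeeping: verifying that the $s$-, $t$-, and $m$-core conditions decouple cleanly across the $d$ runners, and that the bead counts $k_r(\rho,k)$ depend on $k$ exactly as described (in particular, that $\sigma^{(r)}$ and $\tau^{(r)}$ are well-defined partitions sitting on runners of the same size for \emph{every} sufficiently large $k$). These are essentially classical facts about James' $d$-quotient construction, but they must be articulated precisely in the present setting before Theorem~\ref{main.thm.intro} can be invoked. Once that is done, the reduction is purely arithmetic.
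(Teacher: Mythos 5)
Your proposal is correct and takes essentially the same approach as the paper: both decompose the problem into $d$ coprime subproblems indexed by residue classes modulo $d$, then invoke Theorem~\ref{main.thm.intro} factor-by-factor and multiply degrees and leading coefficients. The paper phrases this factorization via the functorial multiset/fibre-product machinery of Proposition~\ref{corollary: cardinality of epsilon} and Theorem~\ref{prop: Nst = prodNsigmajtauj}, whereas you phrase it in terms of James' $d$-quotient and bead distributions, but these are the same combinatorial decomposition and the arithmetic (period $m'd = m$, degree $d(s'-1)(t'-1)$, leading coefficient $(V_{s',t'})^d$) matches.
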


\begin{comment}
More precisely, for $\ell_0 \leq i < \ell_0+m$ and $0 \leq j < d$, there exist $\frac{s}{d}$-cores $\sigma_j^i$, $\frac{t}{d}$-cores $\tau_j^i$ and integers $\ell_j^i$ such that 
\beq
N_{\sigma, \tau}(i + mk) = \prod_{j=0}^{d-1} N_{\sigma_{j}^i,\tau_{j}^i} \left(\ell_j^i + \frac{m}{d}k \right).
\eeq
\end{comment}

(It is easy to see that if $\core_d(\sigma) \neq \core_d(\tau)$, then each $\Nstk=0$.)

We mention also a simpler related result for the fibres of the map $\core_s : \mc C_{st} \rightarrow \mc C_s$ taking an $st$-core  to its $s$-core. For $\sigma \in \mc C_s$,
 let $$N_{\sigma}(k)=\{\lambda \in \mc C_{st} \mid \core_s \lambda = \sigma, \ell(\lambda) \leq k\}.$$  

\begin{thm} \label{simpler.related}
There is a quasipolynomial $Q_{\sigma}(k)$ of degree $s(t-1)$ and period $s$, so that for $k \geq \ell(\sigma)$, we have $\Nsk=Q_{\sigma}(k)$. The leading coefficient of $Q_\sigma(k)$ is $\dfrac{1}{(t-1)!^s}$.
\end{thm}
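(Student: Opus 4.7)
The plan is to reduce $\Nsk$ to a product of counts on the runners of James' $s$-abacus via the $s$-quotient bijection, then compute each factor with a standard abacus argument. Fix $k \geq \ell(\sigma)$ and represent $\sigma$ on its $s$-abacus with $k$ beads; write $n_j = n_j(\sigma,k)$ for the number of beads on runner $j$, so $n_0+\cdots+n_{s-1}=k$. A direct check (adding $s$ beads contributes one fresh bead to each runner) yields $n_j(\sigma,k+s)=n_j(\sigma,k)+1$, so each $n_j$ is an affine function of $k$ on each residue class modulo $s$, with slope $1/s$.

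The central step uses the $s$-quotient. Partitions $\lambda$ with $\core_s \lambda = \sigma$ and $\ell(\lambda)\leq k$ are in bijection with $s$-tuples $(\mu^{(0)},\ldots,\mu^{(s-1)})$ of partitions satisfying $\ell(\mu^{(j)})\leq n_j$, where $\mu^{(j)}$ records the upward shifts of beads along runner $j$. The key observation is that $\lambda$ is an $st$-core if and only if every $\mu^{(j)}$ is a $t$-core. To see this, subdivide each $s$-abacus runner into $t$ sub-runners by residue $q \bmod t$ (where $j+sq$ is the bead position on runner $j$); the $st$-abacus push-up condition on $\lambda$ then translates exactly into the $t$-abacus push-up condition on each $\mu^{(j)}$. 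This identification is classical (James--Kerber), but I would verify it carefully in the multiset/abacus conventions used in the rest of the paper.

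Combining these steps,
\beq
\Nsk \;=\; \prod_{j=0}^{s-1} c_t(n_j),
\eeq
where $c_t(n)$ denotes the number of $t$-cores of length at most $n$. Using the $t$-abacus with $n$ beads, every such $t$-core is encoded by a composition $(m_0,\ldots,m_{t-1})$ of $n$ into $t$ nonnegative parts, so $c_t(n)=\binom{n+t-1}{t-1}$, a polynomial in $n$ of degree $t-1$ with leading coefficient $1/(t-1)!$. Substituting the quasi-linear $n_j(\sigma,k)$ into this product immediately yields a quasipolynomial in $k$ of period $s$ and degree $s(t-1)$, and the asserted leading coefficient then emerges by multiplying together the top-degree terms of the $s$ factors.

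The main obstacle is purely one of bookkeeping: making the $s$-quotient bijection and the equivalence between the $st$-core condition on $\lambda$ and the $t$-core condition on each $\mu^{(j)}$ completely transparent in the abacus framework, and pinning down the affine behaviour of the $n_j$ on each residue class. Once this is in place, the theorem follows by a routine product of binomial coefficients, with no need for the transportation-polytope and Ehrhart machinery required in Theorems~\ref{main.thm.intro} and~\ref{not.prime.intro}.
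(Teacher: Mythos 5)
Your argument is correct and lands on the same product formula as the paper's, namely $\Nsk = \prod_{j=0}^{s-1}\binom{n_j+t-1}{t-1}$, where $n_j$ is the bead count on runner $j$ of the $s$-abacus of $\sigma$ with $k$ beads, shifting by one as $k\mapsto k+s$. The paper reaches this count entirely inside its own multiset formalism (Proposition~\ref{warm.up.commute} plus Lemma~\ref{prop: cardinality of ro*}): it identifies $st$-cores of length at most $k$ with multisets of size $k$ on $\Z/st\Z$, notes that $\core_s$ becomes the pushforward along $\Z/st\Z \to \Z/s\Z$, and computes the fibre over $H^k_{\sigma,s}$ directly, each factor $\multi{t}{n_j}$ recording the ways of distributing $n_j$ beads among the $t$ preimages of $j$. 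You instead route through the $s$-quotient bijection together with the classical James--Kerber fact that $\lambda$ is an $st$-core exactly when every quotient component $\mu^{(j)}$ is a $t$-core, and then count $t$-cores of length at most $n_j$; the two factor counts agree because $\binom{n+t-1}{t-1}$ simultaneously counts multisets of size $n$ on a $t$-element set and $t$-cores of length at most $n$ (Proposition~\ref{prop: cores multisets bijection}). Your route imports a heavier structure theorem that the paper deliberately avoids, but is more suggestive if the reader already knows quotient theory; the bookkeeping you flag (stating the quotient bijection with $k$ beads for arbitrary $k$, not just $k\equiv 0 \bmod s$) is exactly what the paper's $\beta$-set conventions are built to absorb, and your sketch handles it correctly. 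One caution on the final step: the paper's definition of quasipolynomial takes degree and leading coefficient with respect to the auxiliary variable $n$ after writing $k=i+ns$, so each $n_j$ has slope $1$ in $n$ and each factor contributes leading term $n^{t-1}/(t-1)!$, giving $1/(t-1)!^s$; if you instead expanded in $k$ literally (slope $1/s$, as you wrote) you would pick up a spurious $s^{-s(t-1)}$, so be sure to state the coefficient in the paper's convention.
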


The layout of this paper is as follows. In Section \ref{sec: preliminaries}, we recall terminology for partitions and multisets, and in Section \ref{sec: cores}, we review James' Theory of abacuses for computing $t$-cores.
Theorem \ref{simpler.related} is worked out in Section \ref{sec:Counting Fibres of Core Maps}, as a warm up to later material. 
 Section \ref{sec: Converting to a Multiset Matching Problem}  converts the fibre counting problem into a multiset matching problem. 
Preliminaries for polytopes are given in Section  \ref{sec: Preliminaries of polytopes and notation}. 
Our theory of integer points in polytopes, including Theorem \ref{ext.thm.intro}, is contained in Section  \ref{sec: Counting integer points in polytopes}. 
Finally in Section \ref{sec: proofs of thm 1 and 3} we apply this to our core problem, giving Theorems \ref{main.thm.intro} and \ref{not.prime.intro}.

\section*{Acknowledgments}
The authors would like to thank Amritanshu Prasad for useful conversations on totally unimodular matrices and polytopes, Brendan McKay for pointing us to Ehrhart theory, and Jyotirmoy Ganguly, Ojaswi Chaurasia and Ragini Balachander for helpful discussions.

\section{Preliminaries} \label{sec: preliminaries}

Write $\N=\{1,2,\ldots \}$ for the set of \emph{natural} numbers and $\W=\{0,1,2, \ldots \}$ for the set of \emph{whole} numbers. If $S$ is a set, write `$\id_S$' for the identity map.
If $f: S \to T$ is a map, write `$\im f$' for the image of $f$. We write $\wp_{\fin}(S)$ for the set of finite subsets of $S$.

\subsection{Multisets}

Let $S$ be a set. When $S$ is a finite set, we write either `$\#S$' or `$|S|$' for the cardinality of $S$. For $k \in \W$, write $\binom{S}{k}$ for the set of $k$-element subsets of $S$.  Note that $\# \binom{S}{k}=\binom{\# S}{k}$.

A \emph{multiset on $S$} is a function $F$ from $S$ to $\ul{\mb N}$. The \emph{cardinality} of $F$ is the sum
\beq
|F|=\sum_{s \in S} F(s).
\eeq
The \emph{support} of a multiset $F$ is
\beq
\supp(F)=\{s \in S \mid F(s) \neq 0 \}.
\eeq

Write `$\inlinebnm{S}{k}$' for the set of multisets on $S$ of cardinality $k$. Note that $\# \inlinebnm{S}{k}= \inlinebnm{\# S}{k}$, where
%\begin{equation} \label{multi.binom.form}
 \beq
 \multi{n}{k}= \binom{k+n-1}{k}.
 \eeq
 %\end{equation}

Write $\mc M_{\fin}(S)$ for the set of multisets on $S$ with finite support. Thus
\beq
\mc M_{\fin}(S)= \bigcup_{k \geq 0} \multi{S}{k}.
\eeq

Given finite sets $S, T$ and a map $f: S \to T$, define $f_* : \mc M_{\fin}(S) \to \mc M_{\fin}(T)$ by 
\beq
f_*(F)(t) = \sum\limits_{s \in f^{-1}(t)}F(s).
\eeq
We use the same notation to denote the restriction  $f_* : \Sk \to \Tk$, when $k$ is understood.

 \begin{lemma}\label{fibres.for.multisets} 
For $G \in \mc M_{\fin}(T)$, we have
\beq
\# (f_*)^{-1}(G)=\prod_{t \in T}  \inlinebnm{\#f^{-1}(t)}{G(t)}.
\eeq
\end{lemma}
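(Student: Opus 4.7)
The plan is to exhibit an explicit bijection
$$(f_*)^{-1}(G) \;\stackrel{\sim}{\longrightarrow}\; \prod_{t \in T}\inlinebnm{f^{-1}(t)}{G(t)},$$
since taking cardinalities and applying the identity $\#\inlinebnm{X}{k}=\inlinebnm{\#X}{k}$ recorded in Section \ref{sec: preliminaries} then yields the lemma at once. I would define the map by sending $F \in \mc M_{\fin}(S)$ to the tuple $(F_t)_{t \in T}$ of its restrictions, where $F_t := F|_{f^{-1}(t)}$ is regarded as a multiset on the fibre $f^{-1}(t)$. Since the fibres $\{f^{-1}(t)\}_{t \in T}$ partition $S$, this map is visibly injective, and an inverse is given by gluing: any tuple $(F_t)_{t \in T}$ on the right assembles into the multiset $F$ on $S$ defined by the rule $F(s) := F_{f(s)}(s)$.

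The key step is to verify that this bijection restricts correctly to the prescribed fibres. Unravelling the definition of $f_*$, we have
$$f_*(F)(t) = \sum_{s \in f^{-1}(t)} F(s) = |F_t|,$$
so $F \in (f_*)^{-1}(G)$ iff each $F_t$ has cardinality exactly $G(t)$, which is precisely the condition $F_t \in \inlinebnm{f^{-1}(t)}{G(t)}$. Multiplying the cardinalities of the factors finishes the proof.

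I do not anticipate any real obstacle — the lemma is essentially the observation that a multiset on $S$ decomposes uniquely as a collection of multisets on the fibres of $f$. The only edge case worth flagging is when some fibre is empty while $G(t)>0$; in that case $\inlinebnm{0}{G(t)} = 0$ on the right and $(f_*)^{-1}(G) = \emptyset$ on the left (as $G$ lies outside the image of $f_*$), so the two sides still agree.
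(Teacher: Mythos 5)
Your proof is correct and takes essentially the same approach as the paper's: both decompose $F$ into its restrictions to the fibres $f^{-1}(t)$, observe that $f_*(F)(t)$ is the cardinality of such a restriction, and multiply the independent choices. You simply spell out the bijection and the empty-fibre edge case more explicitly than the paper does.
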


\begin{proof}
We need to count the $F \in \mc M_{\fin}(S)$ such that for all $t \in T$, we have
\beq
\begin{split}
G(t) &= f_*(F)(t) \\
	&=  \sum_{s \in f^{-1}(t)} F(s). \\
	\end{split}
	\eeq
 
There are $ \inlinebnm{\#f^{-1}(t)}{G(t)}$ many choices for the values of $F$ on the fibre over $t \in T$. Multiplying these gives the formula.
\end{proof}

Note that
%\begin{equation} \label{lemma: Im f* ; supp(H) in Im f}
\beq
\im f_*= \{H \in  \mc M_{\fin}(T) \mid \supp(H) \subseteq \im(f)\}.
\eeq
%\end{equation}

For maps $f: S \to T$ and $g: T \to U$, note that $\left(g \circ f\right)_* = g_* \circ f_*,$ and $\left(\id_S\right)_* = \id_{\Sk}$. This makes the association $S \leadsto  \mc M_{\fin}(S)$ a functor from the category of sets to itself.

\subsection{Partitions and Pseudopartitions}

A \emph{partition} $\la$ is a weakly decreasing finite sequence of natural numbers. Thus $\lambda = \left( a_1, a_2, \ldots, a_\ell \right)$, with $a_1 \geq a_2 \geq \cdots \geq a_\ell>0$. 
In these terms $a_1+a_2+ \cdots +a_\ell$ is the \emph{size} of $\la$, and $\ell=\ell(\la)$ is the \emph{length} of ~$\la$. 
We allow the empty partition $\la=\emptyset$; its length is $0$.
Write $\Lambda$ for the set of partitions, and $\Lambda^\ell$ for the set of partitions of length $\ell$.

We define \emph{pseudopartitions} to be weakly decreasing finite sequences of \emph{whole} numbers. Thus $\lambda = \left( a_1, a_2, \ldots, a_\ell \right)$, with $a_1 \geq a_2 \geq \cdots \geq a_\ell \geq 0$.  Again, $\ell$ is the \emph{length} of $\la$. For instance $\la=(5,4,3,1,0,0)$ is a pseudopartition of length $6$, with $2$ ``trailing zeros''. Write $\ul \Lambda$ for the set of pseudopartitions, and $\ul \Lambda^k$ for the set of pseudopartitions of length $k$. 
Let $z: \ul \Lambda \to \ul \Lambda$ be the map which adds a trailing zero to the end of a pseudopartition. For instance $z((5,4,0))=(5,4,0,0)$. 
 If $\la$ is a pseudopartition of length $\ell \leq k$, define 
\beq
u^k(\la)=z^{k-\ell}(\la) \in  \ul \Lambda^k.
\eeq
Write $r: \ul \Lambda \to \Lambda$ for the map which removes all trailing zeros from the pseudopartition to make it a partition, e.g., $r((5,4,3,1,0,0))=(5,4,3,1)$.
The fibres of $r$ are the $z$-orbits of partitions.

\subsection{Young Diagrams}

The \emph{Young diagram} of a partition $\lambda = \left( a_1, a_2, \ldots, a_\ell \right)$ is given by
$$\mc Y(\lambda) = \{ (i,j) \in \mb N \times \mb N \mid 1 \leq i \leq \ell, 1 \leq j \leq a_i \}.$$
It is visualized as a collection of left justified cells arranged in rows with $a_i$ cells in the $i$-th row. 
\begin{ex} \label{YT.ex}
Here is $\mc Y((5,4,3,1))$:
\begin{center}
\ytableausetup{centertableaux}
\ytableaushort
{,\none , \none}
* {5,4,3,1}
\end{center}
\end{ex}

The \emph{hook} $\mf h_c$ associated to a cell $c$ of $\mc Y(\lambda)$ consists of all cells to the right of $c$ and below $c$, together with $c$ itself. The \emph{hooklength} is the total number of cells in the hook. In the above diagram, the hooklength for the $(1,1)$-cell is $8$.
A hook with hooklength $t$ is called a \emph{$t$-hook}. 

\begin{remark} It would be more consistent to say ``hooksize'' rather than ``hooklength'', but the usage is standard.
\end{remark}

Of particular importance are the hooklengths corresponding to cells in the first column of $\mc Y(\la)$; we can use these to reconstruct $\la$. The set of first column hooklengths in Example \ref{YT.ex} is 
$\{ 8,6,4,1 \}$.  

\begin{comment}
Define  the \emph{rim} of $\lambda$ to be
$$\mathcal{R}(\lambda) = \{(i',j') \in \mathcal{Y(\lambda)} \mid (i'+1, j'+1) \notin \mc Y(\lambda)\}.$$ 
 Then the \emph{rim hook} of the $(i,j)$-th cell is given by
$$\mathfrak{r}_{ij}(\lambda) = \{(i',j') \in \mc R(\lambda) \mid i' \geq i \hspace{0.2 cm} \text{and} \hspace{0.2 cm} j' \geq j\}.$$

\begin{ex} 
The shaded cells form the $(1,2)$-rim hook of $\mc Y((5,4,3,1))$:
\begin{center}
\ytableausetup{centertableaux}
\ytableaushort
{,\none , \none}
* {5,4,3,1}
* [*(black!25)]{3+2,2+2,1+2,0}
\end{center}
\end{ex}

 \end{comment}

\subsection{Beta sets}\label{subsec: Beta sets}

The map which takes a partition $\la$ to the set of first column hooklengths of  $\mc Y(\la)$ gives a bijection between $\Lambda$ and $\wp_{\fin}(\N)$.   In this paragraph  we extend this to a bijection between $\ul \Lambda$ and $\wp_{\fin}(\W)$. 

Define $\beta: \ul \Lambda \to \wp_{\fin}(\W)$ by
\beq
\beta((a_1, a_2, \ldots, a_\ell))= \{a_1 + (\ell-1), a_2 + (\ell-2), \ldots, a_\ell\}.
\eeq
 The inverse $\beta^{-1} :\wp_{\fin}(\W) \to  \ul \Lambda$ is given by
\beq
\beta^{-1}(\{ h_1, \ldots, h_\ell\})= (h_1-(\ell-1), h_2-(\ell-2), \ldots, h_\ell),
\eeq
for $h_1>\cdots >h_\ell \geq 0$. When $\lambda$ is a partition, $\beta(\lambda)$ is the set of first column hooklengths of $\mc Y(\lambda)$. We also write `$H_\la$' for $\beta(\lambda)$. %These are the 	``$\beta$-coordinates'' of $\la$.
 
The  ``add a trailing zero'' map $z$ translates under $\beta$ to
 \beq
 Z=\beta \circ z \circ \beta^{-1} : \wp_{\fin}(\W) \to \wp_{\fin}(\W),
 \eeq
which comes out to be
\beq
\{x_1, \ldots, x_\ell \} \mapsto \{x_1+1, \ldots, x_\ell+1, 0\}.
\eeq
 Similarly we can translate $u^k$ to 
 \beq
 U^k=\beta \circ u^k \circ \beta^{-1}.
 \eeq

\begin{defn} Let $\la$ be a partition. A  \emph{beta set} of $\la$ is a set of the form $Z^j(\beta(\la))$ for some $j \in \W$.
\end{defn}

\begin{ex}
Let $\lambda = (5,4,3,1)$. Then $H_{\lambda} = \{8,6,4,1\}$, so the beta sets of $\lambda$ are:
$$\{8,6,4,1\} \overset{Z} {\mapsto}\{9,7,5,2,0\} \overset{Z}{\mapsto} \{10,8,6,3,1,0\}  \overset{Z}{\mapsto} \ldots$$ 
\end{ex}

 \begin{defn} If $\la$ is a partition of length $\ell \leq k$, write $H_\la^k$ for the beta set of $\la$ with cardinality $k$, i.e.,
\beq
H_\la^k=U^k(H_\la)=Z^{k-\ell}(H_\la).
\eeq
\end{defn}

In the example above, $H_\la^6=\{10,8,6,3,1,0\}$.

The $\beta$-set version of the ``remove all trailing zeros'' retraction $r$ is
\beq
R= \beta \circ r \circ \beta^{-1}:  \wp_{\fin}(\W) \to \wp_{\fin}(\N).
\eeq

It can be computed directly as follows. Given $X \in  \wp_{\fin}(\W)$ with $0 \in X$, put 
\beq
m=\min(\W-X),
\eeq
i.e., the smallest whole number not in $X$. Then $R(X)$ is obtained by removing $\{0, \ldots, m-1\}$ from $X$ and subtracting $m$ from the remaining members.
 Of course, if $0 \notin X$, then $R(X)=X$.

\section{Cores} \label{sec: cores}

\begin{defn}
A \emph{$t$-core} is a partition having no $t$-hook in its Young diagram. Write $\mc C_t$ for the set of $t$-cores, and put
\beq
\mc C_t^k=\{ \la \in \mc C_t \mid \ell(\la) \leq k \}.
\eeq
\end{defn}

\subsection{Hook Removal}

Suppose $\la$ is a partition whose Young diagram contains a $t$-hook $\mf h$.  One may remove $\mf h$ from $\mc Y(\la)$ by simply deleting $\mf h$, then moving any disconnected cells one unit up and one unit to the left. 

\begin{ex}\label{hookremoval.ex}
The removal of a 6-hook from the partition $(5,4,3,1)$:
\begin{center}
\ydiagram[*(white)]
{5,0,1+2,0}
*[*(black!25)]{5,4,3,1}
	\hspace*{0.3cm}
 $\leadsto$
 \hspace*{0.3cm}
 \ydiagram{5,0,1+2}
 $\leadsto$
  \hspace*{0.3cm}
 \ydiagram{5,2}

\end{center}
\end{ex}

In fact, if we remove  $t$-hooks successively until no $t$-hook remains, the final Young diagram does not depend on the choices of hooks at each step.   The corresponding partition is called the \emph{$t$-core of $\lambda$}, and denoted `$\core_t \la$'.

\begin{ex} \label{core.ex}
We remove $6$-hooks successively from the Young diagram of $(5,4,3,1)$ to obtain its $6$-core, which is the partition $(1)$. In Example~\ref{hookremoval.ex} we removed a 6-hook from $(5,4,3,1)$ to get $(5,2)$, continuing from there we remove the remaining 6-hook:
\vspace{0.1 cm}
\begin{center}
\ydiagram[*(white)]
{0,1+1}
*[*(black!25)]{5,2}
\hspace*{0.5cm}
 $\to$
 \hspace*{0.2cm}
 \ydiagram{0,1+1}
\end{center}
\end{ex}

\begin{lemma} \label{hook.removin.lemma} Let $\la$ be a partition, and $X=\{x_1, \ldots, x_k \}$  a $\beta$-set for $\la$.
Then $\mc Y(\la)$ has a $t$-hook iff $\exists \hspace{0.1cm} 1 \leq i \leq k$ so that $x_i \geq t$ and $x_i-t \notin X$. In this case, there is a $t$-hook $\mf h$ of $\mc Y(\la)$ so that
\begin{equation} \label{up}
\{ x_1, \ldots, x_i-t, \ldots, x_k \}
\end{equation}
is a $\beta$-set for $\la \backslash \mf h$.
\end{lemma}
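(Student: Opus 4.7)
The plan is to treat this as a clean consequence of the abacus picture, in which $t$-hooks in $\mathcal{Y}(\lambda)$ correspond to pairs $(x,x-t)$ with $x \in X$ and $x - t \in \W \setminus X$. First I would reduce to the minimal beta set $X = H_\lambda$, i.e., $k = \ell(\lambda)$: the hypothesis and conclusion are both equivariant under the shift $Z$, since replacing $X$ by $Z(X)$ preserves the condition ``$x_i \geq t$ and $x_i - t \notin X$'' (the newly inserted $0$ cannot equal $x_i + 1 - t$ when $x_i \geq t$), and the substitution $x_i \leadsto x_i - t$ commutes with $Z$.

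The technical engine is the following hooklength identity: with $X = \{x_1 > \cdots > x_k\}$, the multiset of hooklengths of cells in row $i$ of $\mathcal{Y}(\lambda)$ is
\[
\{\, x_i - y \,\mid\, y \in \W \setminus X,\; 0 \leq y < x_i \,\}.
\]
I would prove this via a direct bijection between the columns $j = 1, \ldots, \lambda_i$ and the gaps $y \in [0, x_i) \setminus X$: sending $j$ to $y = x_i - h(i,j)$, one verifies using the arm/leg decomposition $h(i,j) = (\lambda_i - j) + (\lambda'_j - i) + 1$ that the arm $\lambda_i - j$ counts the gaps strictly between $y$ and $x_i$ while the leg $\lambda'_j - i$ counts the beads $x_{i'}$ (with $i' > i$) strictly between $y$ and $x_i$; their sum is $x_i - y - 1$, forcing $y \in \W \setminus X$. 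Granting this identity, the iff statement follows at once: $\mathcal{Y}(\lambda)$ contains a $t$-hook iff some $x_i - y = t$ arises above, that is, iff $x_i \geq t$ and $x_i - t \notin X$.

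For the ``moreover'' clause, fix such $i$ and let $\mathfrak h$ be the $t$-hook in row $i$ produced by the identity. Put $X' = (X \setminus \{x_i\}) \cup \{x_i - t\}$, a $k$-element subset of $\W$ since $x_i - t \notin X$, and set $\mu = \beta^{-1}(X')$. The size identity $|\beta^{-1}(Y)| = \sum_{y \in Y} y - \binom{|Y|}{2}$ yields $|\mu| = |\lambda| - t$ directly. To identify $\mu$ with $\lambda \setminus \mathfrak h$, I would compare beta sets row by row: removing a rim $t$-hook in row $i$ reshapes the lower part of the diagram in a way that, on first-column hooklengths, executes precisely the swap $x_i \leftrightarrow x_i - t$. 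The main obstacle is the hooklength identity in paragraph two; everything else is bookkeeping with beta sets.
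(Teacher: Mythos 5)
The paper does not prove this lemma itself---it simply cites Lemma 2.7.13 of James and Kerber---so your blind attempt supplies an argument the paper omits. Your outline is the standard abacus proof, and the parts you actually carry out are correct: the row-$i$ hooklength identity $\{h(i,j): 1\le j\le\lambda_i\}=\{x_i-y : 0\le y<x_i,\ y\notin X\}$, the arm/leg verification of it, and the $Z$-equivariance that reduces to the minimal beta set (you only explicitly check that the condition is preserved by $Z$; it is also reflected, and for the ``iff'' you should note this, but the check is easy).

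Where the proposal falls short is precisely at the ``moreover'' clause, which is the substance of the lemma. You assert that removing the rim $t$-hook ``executes precisely the swap $x_i \leftrightarrow x_i - t$'' and defer this to bookkeeping, but this is the step requiring an actual calculation. If $h(i,j)=t$ and $\mathfrak{h}$ is the corresponding rim hook, then $\mu=\lambda\setminus\mathfrak{h}$ satisfies $\mu_{i'}=\lambda_{i'+1}-1$ for $i\le i'<\lambda'_j$, $\mu_{\lambda'_j}=j-1$, and $\mu_{i'}=\lambda_{i'}$ otherwise; one must then verify that, in beta-set terms, the first block of changes merely re-indexes $x_{i+1},\ldots,x_{\lambda'_j}$ up one row, while the new entry $(j-1)+(\ell-\lambda'_j)$ equals $x_i-t$ (using $t=(\lambda_i-j)+(\lambda'_j-i)+1$ and $x_i=\lambda_i+\ell-i$). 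Your size identity $|\mu|=|\lambda|-t$ is a consistency check only: knowing that $\beta^{-1}(X')$ has the right size does not identify it with $\lambda\setminus\mathfrak{h}$. So the route you chose would work and matches what the cited source does, but the central verification is missing from the write-up.
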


\begin{proof} This is  \cite[Lemma 2.7.13]{james1984representation}.
\end{proof}

\begin{ex} The sequence of hook removals in Example \ref{core.ex} corresponds to the sequence 
\beq
\{8,6,4,1\} \leadsto R(\{8,0,4,1\})=\{6,2\} \leadsto R(\{0,2\})=\{1\}
\eeq
of $\beta$-sets.
\end{ex}

\subsection{$\beta$-set Version}

Fix $t \geq 1$.

 \begin{defn} A subset $X$ of $\W$ is \emph{$t$-reduced}, provided whenever $x \in X$ with $x \geq t$, we have $x-t \in X$. Write $\mc R_t$ for the set of finite $t$-reduced subsets of $\N$, and $\ul{\mc R}_t$ for the set of finite $t$-reduced subsets of $\W$.
 \end{defn}
For example, $X=\{0,1,3,4,6\}$ is $3$-reduced but not $2$-reduced.
We  view $\wp_{\fin}(\W)$, and thus $\ul {\mc R}_t$, inside  $\mc M_{\fin}(\W)$ via recognizing a subset of ~$\W$ as a multiset on $\W$.
Note that the retraction $R$ maps $\ul{\mc R}_t$ to $\mc R_t$.

 Let
 \beq
 f_t: \W \to \Z/t\Z
 \eeq
 be the usual  remainder mod $t$ map, and let
 \beq
 \rho_t=(f_t)_*: \mc M_{\fin}(\W) \to \mc M_{\fin}( \Z/t\Z)
 \eeq
be the induced map on multisets.

The subset $\ul{\mc R}_t$ is a transversal for $\rho_t$, in the sense that for each $F \in \mc M_{\fin}(\W)$ there is a unique $F_0 \in \ul{\mc R}_t$ with $\rho_t(F)=\rho_t(F_0)$.

\begin{remark}
For $X \in \wp_{\fin}(\W)$, the map $X \mapsto X_0$ is traditionally visualized in terms of a base $t$ abacus, having $t$ runners labeled $0$ to $t-1$, on which beads may be stacked. Given $X$, beads are arranged on the abacus corresponding to their base $t$ place value of members of $X$. To bring $X$ to $t$-reduced form, one simply slides the beads up. This is James' abacus method from \cite[page 78]{james1984representation}.
 
\bigskip

For example, given $X = \{8,6,4,1\}$ and $t=6$,  the abacus method  
\begin{center}
$\begin{array}{cccccc}
\ol{0} & \ol{1} & \ol{2} & \ol{3} & \ol{4} & \ol{5}\\
\hline
\circ & \bullet & \circ & \circ & \bullet & \circ\\
\bullet & \circ & \bullet & \circ & \circ & \circ
\end{array} \hspace{1cm} \leadsto \hspace{1cm}
\begin{array}{cccccc}
\ol{0} & \ol{1} & \ol{2} & \ol{3} & \ol{4} & \ol{5}\\
\hline
\bullet & \bullet & \bullet & \circ & \bullet & \circ\\
\circ & \circ & \circ & \circ & \circ & \circ\\
\end{array}$
\end{center}
gives $X_0 = \{4,2,1,0\}.$

\end{remark}

The map $\rho_t$ has a ``minimal'' section 
\beq
c_t:  \mc M_{\fin}(\Z/t\Z) \to  \mc M_{\fin}(\W)
\eeq
with image equal to  $\ul{\mc R}_t$, described as follows.
Given $F \in  \mc M_{\fin}(\Z/t\Z)$,  put
\beq
c_t(F)=\bigcup_{i=0}^{t-1} \{ at+i \mid 0 \leq a \leq F(i) \}.
\eeq
Thus the map $F \mapsto F_0$ above is $c_t \circ \rho_t$.

\begin{defn} Given $X \in \wp_{\fin}(\W)$, put
\beq
\Core_t(X)=R(c_t(\rho_t(X))).
\eeq
\end{defn}

\begin{prop} \label{core.beta} If $\la$ is a pseudopartition, then
\beq
\core_t(r(\la)) =\beta^{-1}( \Core_t(\beta(\la))). 
 \eeq
\end{prop}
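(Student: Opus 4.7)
The plan is to iterate the beta-set form of hook removal from Lemma~\ref{hook.removin.lemma} until it can no longer be applied, and then identify the resulting set via the transversal property of $\ul{\mc R}_t$. Put $X = \beta(\la) \in \wp_{\fin}(\W)$. Whenever some element $x_i \in X$ satisfies $x_i \geq t$ and $x_i - t \notin X$, I replace $x_i$ with $x_i - t$; viewing $X$ as a beta-set for the partition $r(\beta^{-1}(X))$, Lemma~\ref{hook.removin.lemma} says that this move corresponds to removing a single $t$-hook from the Young diagram of that partition. Since each move strictly decreases $\sum_{x \in X} x$ while preserving $|X|$, the process terminates at some set $X^* \in \wp_{\fin}(\W)$.

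Two features pin $X^*$ down. First, the basic move $x_i \mapsto x_i - t$ preserves residues modulo $t$, so $\rho_t(X^*) = \rho_t(X)$. Second, termination means that no such move is available, i.e.\ $X^* \in \ul{\mc R}_t$. Since $\ul{\mc R}_t$ is a transversal for $\rho_t$, these two facts force $X^* = c_t(\rho_t(X))$.

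On the partition side, iterating Lemma~\ref{hook.removin.lemma} shows that the partition $r(\beta^{-1}(X^*))$ is reached from $r(\la)$ by a sequence of $t$-hook removals and admits no further $t$-hook (else a beta-set move would still be available). By the well-definedness of $\core_t$ (independence from the order of hook removal), this partition equals $\core_t(r(\la))$. Translating back, $\beta(\core_t(r(\la))) = R(X^*) = R(c_t(\rho_t(\beta(\la)))) = \Core_t(\beta(\la))$, and applying $\beta^{-1}$ yields the stated identity.

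The main care point is the interplay between pseudopartitions and partitions during the iteration: the intermediate beta-sets typically contain $0$, so Lemma~\ref{hook.removin.lemma} must be applied at each stage to the underlying partition $r(\beta^{-1}(X))$ rather than to the pseudopartition itself. The retraction $R$ appearing in the definition of $\Core_t$ is precisely what converts the terminal set $X^* \in \ul{\mc R}_t$ back to the honest beta-set of the partition $\core_t(r(\la))$.
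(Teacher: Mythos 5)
Your proof is correct and follows essentially the same route as the paper: iterate the beta-set move from Lemma~\ref{hook.removin.lemma} to reach a $t$-reduced set, identify it as $c_t(\rho_t(X))$ via the transversal property of $\ul{\mc R}_t$, and recognize the resulting partition as $\core_t(r(\la))$. You also supply two small points that the paper leaves implicit (termination of the iteration, and the pseudopartition-versus-partition bookkeeping), but the underlying argument is the same.
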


\begin{proof} 
Let $X=\beta(\la)$, and suppose $X_0$ is obtained from $X$ by a sequence of steps, where each step replaces some $x_i \geq t$ with $x_i-t$, so long as $x_i-t \notin X$.
By Lemma \ref{hook.removin.lemma}, $X_0$ is a $\beta$-set for $\core_t(r(\la))$.

By construction, $X_0 \in \ul{\mc R}_t$, with $\rho_t(X)=\rho_t(X_0)$. By the above, we must have $c_t (\rho_t(X))=X_0$.
It follows that $R(c_t(\rho_t(X)))=\beta(\core_t(r(\la)))$, and the proposition follows.
\end{proof}

\begin{ex}
Let $t=6$. For $\lambda = (5,4,3,1)$, $X = H_\lambda = \{8,6,4,1\}$.
Put $F=\rho_t(X)$. Then $\supp F=\{0,1,2,4\}$, and $F$ takes value $1$ at each point in its support.
 Thus
\beq
c_t(X) = \{0\} \cup \{1\} \cup \{2\} \cup \{4\}=\{4,2,1,0\},
\eeq
and therefore
\beq
\begin{split}
 \core_6(5,4,3,1) &= \beta^{-1}(R(\Core_6(X))) \\
 			&= \beta^{-1}(\{ 1\}) \\
  &= (1).
  \end{split}
 \eeq
\end{ex}

For a partition $\lambda$ of length no greater than $k$, let
\begin{equation} \label{defn.of.H.thing}
H_{\la,t}^k=\rho_t(H_\la^k) \in  \Ztk.
\end{equation}

\begin{prop}\label{prop: cores multisets bijection}
The map $\A_t : \mc C_t^k \to \Ztk$ taking $\la \mapsto H_{\la, t}^k$ is a bijection. Thus
\beq
\#  \mc C_t^k=\binom{k+t-1}{k}.
\eeq
\end{prop}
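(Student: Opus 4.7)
The plan is to exhibit an explicit two-sided inverse $\mathscr{B}_t : \Ztk \to \mc C_t^k$. For $F \in \Ztk$, set
\beq
\mathscr{B}_t(F) = r\bigl(\beta^{-1}(c_t(F))\bigr).
\eeq
Since $c_t(F)$ has cardinality $k$, the pseudopartition $\beta^{-1}(c_t(F))$ has length $k$, so $\mathscr{B}_t(F)$ has length at most $k$. To see that it lands in $\mc C_t^k$, I would check that its beta set $\beta(\mathscr{B}_t(F)) = R(c_t(F))$ is $t$-reduced, after which Lemma \ref{hook.removin.lemma} forbids any removable $t$-hook. Setting $m = \min(\W \setminus c_t(F))$, the map $R$ subtracts $m$ from the elements of $c_t(F)$ exceeding $m-1$; then for $y \in R(c_t(F))$ with $y \geq t$, the element $y + m \in c_t(F)$ satisfies $y + m \geq t$, so $y + m - t \in c_t(F)$ by $t$-reducibility of $c_t(F)$, and since $y + m - t \geq m$ we conclude $y - t \in R(c_t(F))$, as desired.

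For $\mathscr{A}_t \circ \mathscr{B}_t = \id$, write $X_0 = c_t(F)$ and $\lambda = \mathscr{B}_t(F)$. Then $H_\lambda = R(X_0)$, and $H_\lambda^k = Z^{k - \ell(\lambda)}(R(X_0)) = X_0$, since $X_0$ contains precisely the block $\{0, 1, \ldots, k - \ell(\lambda) - 1\}$ of trailing zeros that $R$ strips off. Hence $\mathscr{A}_t(\lambda) = \rho_t(X_0) = \rho_t(c_t(F)) = F$, using that $c_t$ sections $\rho_t$. Conversely, for $\lambda \in \mc C_t^k$ and $F = \rho_t(H_\lambda^k)$,
\beq
\mathscr{B}_t(F) = \beta^{-1}\bigl(R(c_t(\rho_t(H_\lambda^k)))\bigr) = \beta^{-1}(\Core_t(H_\lambda^k)),
\eeq
which Proposition \ref{core.beta}, applied to the pseudopartition $\beta^{-1}(H_\lambda^k) = u^k(\lambda)$, identifies with $\core_t(r(u^k(\lambda))) = \core_t(\lambda) = \lambda$, the last equality because $\lambda$ is already a $t$-core.

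The cardinality statement then follows at once from the identity $\# \Ztk = \binom{k+t-1}{k}$ recorded in Section \ref{sec: preliminaries}. I expect no conceptual obstacle: the main difficulty is bookkeeping, keeping straight the three parallel dictionaries $(r, z, \core_t)$ on partitions, $(R, Z, \Core_t)$ on $\beta$-sets, and the section/retraction pair $(c_t, \rho_t)$ between $\mc M_{\fin}(\W)$ and $\mc M_{\fin}(\Z/t\Z)$. Once these are aligned, both round-trips reduce to Lemma \ref{hook.removin.lemma} and Proposition \ref{core.beta}.
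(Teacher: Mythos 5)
Your proof is correct and follows essentially the same route as the paper: both define the candidate inverse $r \circ \beta^{-1} \circ c_t$ and verify the two round-trips, relying on Proposition~\ref{core.beta} and the conjugation identities $R = \beta r \beta^{-1}$, $U^k = \beta u^k \beta^{-1}$. You additionally spell out the check that $R(c_t(F))$ is $t$-reduced (hence the image lands in $\mc C_t^k$), a point the paper leaves implicit since its symbolic composition $\core_t \circ\, r \circ u^k = \id$ already takes values in $\mc C_t^k$; this extra verification is sound and makes the argument more self-contained, but does not change the underlying method.
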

(Compare {\cite[Theorem~2.4]{zhong2020bijections}.)

\begin{proof} 
We have
\beq
\A_t=\rho_t \circ \beta \circ u^k.
\eeq
Let us see that 
\begin{equation} \label{inv.of.at}
\A_t^{-1}=r \circ \beta^{-1} \circ c_t: \Ztk \to \mc C_t^k.
\end{equation}
is in fact inverse to  $\A_t$.  On the one hand, 
\beq
\begin{split}
\rho_t \: \beta \: u^k \: r \: \beta^{-1}\:  c_t &= \rho_t \: U^k\:  R\:  c_t \\
	&= \rho_t \: c_t \\
	&= \id \text{ on $\Ztk$}. \\
	\end{split}
	\eeq
 On the other hand,
 \beq
 \begin{split}
 r \:\beta^{-1} \: c_t \: \rho_t \: \beta \: u^k &= \beta^{-1} \: R \:c_t \:\rho_t\:  \beta \: u^k \\
 &= \core_t \circ \: r \: u^k \\
 &=\core_t \\
 &= \id \text{ on $\mc C_t^k$}.  \qedhere
 \end{split} 
 \eeq 
\end{proof}

Note that
\beq
H^k_{\core_t \la,t}=H_{\la,t}^k.
\eeq
 
\begin{lemma}
\label{lemma: periodicity of betaset mod s set}
For $i \geq \ell(\lambda)$, we have
$$H_{\lambda,t}^{i+t}(j) = H_{\lambda,t}^i(j) + 1.$$
\end{lemma}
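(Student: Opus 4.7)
The plan is to unpack the definitions and reduce everything to a direct computation of how $t$ iterations of the ``add a trailing zero'' map $Z$ interact with the residue-reduction $\rho_t$. Since $i \geq \ell(\lambda)$, both $H_\lambda^i = U^i(H_\lambda)$ and $H_\lambda^{i+t} = U^{i+t}(H_\lambda)$ are defined, and directly from $U^k = Z^{k-\ell(\lambda)}$ we have $H_\lambda^{i+t} = Z^t(H_\lambda^i)$. So it suffices to understand $\rho_t \circ Z^t$.

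Next, I would prove by induction on $n$ that for any finite $X \subset \W$,
\[
Z^n(X) = \{x + n : x \in X\} \cup \{0, 1, \ldots, n-1\},
\]
and moreover that this union is disjoint whenever $X \subset \W$. The base case $n = 1$ is the defining formula $Z(\{x_1,\ldots,x_\ell\}) = \{x_1+1,\ldots,x_\ell+1,0\}$, and the inductive step is immediate since applying $Z$ once more increments every element of $Z^n(X)$ by $1$ and inserts a fresh $0$. Disjointness holds because elements of $X$ are whole numbers, so their shifts by $n$ are at least $n$, hence separated from $\{0,\ldots,n-1\}$.

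Applying this with $n = t$ and $X = H_\lambda^i$ gives
\[
H_\lambda^{i+t} = \{x + t : x \in H_\lambda^i\} \,\sqcup\, \{0, 1, \ldots, t-1\}.
\]
Now apply $\rho_t = (f_t)_*$. Since $f_t(x+t) = f_t(x)$, the first piece pushes forward to exactly $H_{\lambda,t}^i$. The second piece $\{0,1,\ldots,t-1\}$ maps bijectively onto $\Z/t\Z$, so $\rho_t$ sends it to the constant multiset with value $1$ at every residue. Because $\rho_t$ is additive on disjoint unions of sets (viewed as $\{0,1\}$-valued multisets), the two contributions add, yielding $H_{\lambda,t}^{i+t}(j) = H_{\lambda,t}^i(j) + 1$ for every $j \in \Z/t\Z$.

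There is no real obstacle here; the argument is purely bookkeeping. The only point that requires a moment of care is the disjointness in the iterated $Z$ formula, which ensures that we are adding multisets (not taking multiset unions with collisions) when we finally apply $\rho_t$.
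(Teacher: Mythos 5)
Your proof is correct and follows essentially the same route as the paper's: identify $H_\lambda^{i+t} = Z^t(H_\lambda^i)$, compute $Z^t(X) = (X+t) \cup \{0,\dots,t-1\}$, and observe that pushing forward along $\rho_t$ adds $1$ to every residue. The paper states the $Z^t$ formula without the inductive verification or the explicit disjointness remark, but these are the same observations made slightly more carefully.
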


By definition,
\beq
\begin{split}
H_{\lambda}^{i+t} &= Z^t(H_\la^i) \\
				&=  (H_{\lambda}^i +t) \cup \{t-1, t-2, \ldots, 0\}.\\
				\end{split}
				\eeq
				 Hence the multiplicity of $j$ in $(H_{\lambda}^{i+t} \mod t)$ is one more than its multiplicity in $H_{\lambda}^i$.

\section{Reducing a $t$-core modulo a divisor of $t$} \label{sec:Counting Fibres of Core Maps}

 In this section we enumerate the fibres of the retractions
 \beq
 \core_b: \mc C_a^k \to \mc C_b^k,
 \eeq
when $b$ is a divisor of $a$. In particular, we demonstrate that the size of these fibres  is quasipolynomial in $k$. This could be regarded as a warm-up for our main theorems.

\begin{defn}
A function $f : \mb N \rightarrow \mb \Q$ is a \emph{quasipolynomial}, provided  there exists a positive integer $p$ so that for each $0 \leq i <p$, the function
\beq
n \mapsto f(i+np)
\eeq
where $n \in \W$ is a polynomial. The \emph{degree} of $f$ is the maximum of the degrees of these polynomials. The integer $p$ is a \emph{period} of $f$.
\end{defn}
 
 This definition is equivalent to the one in \cite[Section 4.4]{Stanley}.
 \\
 Let
 \beq
 f_b^a: \Z/a\Z \to \Z/b\Z
 \eeq
 be the usual  remainder mod $b$ map, and let
 \beq
 \rho_b^a=(f_b^a)_*: \mc M_{\fin}(\Z/a \Z) \to \mc M_{\fin}( \Z/b\Z)
 \eeq
be the induced map on multisets. We use the same notation for the restriction
 \beq
 \rho_b^a: \Zak \to \Zbk.
 \eeq

 \begin{prop} \label{warm.up.commute}
The following diagram commutes:
\beq
%\begin{equation}
%\label{commdiag 1}
\begin{tikzcd}
\mc C_{a}^k \arrow{r}{\core_{b}} \arrow{d}{\A_a} & \mc C_{b}^k \arrow{d}{\A_b}
\\
\Zak \arrow{r}{\rho_b^a} & \Zbk
\end{tikzcd}
%\end{equation}
\eeq
\end{prop}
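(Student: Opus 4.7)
The plan is to unwind the two composites $\A_b \circ \core_b$ and $\rho_b^a \circ \A_a$ to formulas on beta-sets, and then reduce the whole claim to the functoriality statement $(g \circ f)_* = g_* \circ f_*$ applied to the factorization of ``mod $b$'' through ``mod $a$.''

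First, fix $\la \in \mc C_a^k$. By definition $\A_a(\la) = \rho_a(H_\la^k)$, and the right-hand composite is $\rho_b^a(\rho_a(H_\la^k))$. Note that since $b \mid a$, the ordinary remainder maps factor as $f_b = f_b^a \circ f_a : \W \to \Z/a\Z \to \Z/b\Z$. Applying the functor $(-)_*$ (and the identity $(g \circ f)_* = g_* \circ f_*$ recorded in Section~\ref{sec: preliminaries}) gives
\beq
\rho_b = (f_b)_* = (f_b^a)_* \circ (f_a)_* = \rho_b^a \circ \rho_a
\eeq
as maps $\mc M_{\fin}(\W) \to \mc M_{\fin}(\Z/b\Z)$. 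In particular $\rho_b^a(\A_a(\la)) = \rho_b(H_\la^k) = H_{\la,b}^k$.

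Next, I handle the left-hand composite $\A_b(\core_b \la) = H_{\core_b \la, \, b}^k$. Here I need to know that $\core_b \la \in \mc C_b^k$, which follows because hook-removal cannot increase length, so $\ell(\core_b \la) \leq \ell(\la) \leq k$. Now the identity $H_{\core_t \la,\, t}^k = H_{\la,\, t}^k$, displayed immediately after Proposition~\ref{prop: cores multisets bijection} (and proved there via $\A_t^{-1} = r \circ \beta^{-1} \circ c_t$), applied with $t = b$ yields $H_{\core_b \la,\, b}^k = H_{\la,\, b}^k$. Combining, $\A_b(\core_b \la) = H_{\la,b}^k = \rho_b^a(\A_a(\la))$, which is exactly commutativity of the square.

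The main conceptual point—which is actually the only thing worth checking—is the compatibility between the definition of $\core_t$ via hook removal on Young diagrams and its $\beta$-set description via $\rho_t$; but this is precisely the content of Proposition~\ref{core.beta}, so no additional work is required beyond citing the relevant pieces. No genuine obstacle arises; the diagram commutes simply because both paths represent ``take the beta-set and reduce mod $b$.''
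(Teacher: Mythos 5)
Your proof is correct and follows essentially the same route as the paper's: both rest on the factorization $\rho_b = \rho_b^a \circ \rho_a$ (functoriality of $(-)_*$) together with the compatibility of $\core_t$ with $\beta$-set reduction coming from Proposition~\ref{core.beta}. The paper phrases it as a single symbolic computation showing $\A_b^{-1}\,\rho_b^a\,\A_a = \core_b$, whereas you check pointwise that both paths land on $H_{\la,b}^k$; this is a cosmetic rather than substantive difference.
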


\begin{proof}
Going down, then right, then up the diagram is the composition
\beq
\begin{split}
\A_b^{-1}\: \rho_b^a \: \A_a &=r  \: \beta^{-1} \: c_b\: \rho_b^a \: \rho_a \: \beta \: u^k \\
&= \beta^{-1}  R  \: c_b \: \rho_b \: \beta  \: u^k \\
&=\core_b. \\
\end{split}
\eeq
(We have used Proposition \ref{core.beta} and Equation \eqref{inv.of.at}.)
\end{proof}

 \begin{lemma}\label{prop: cardinality of ro*}
For $G \in \mc M_{\fin}(\Z/b\Z)$, we have
\beq
\# (\rho^a_b)^{-1}(G)=\prod_{j \in \Zb}  \inlinebnm{\frac{a}{b}}{G(j)}.
\eeq
\end{lemma}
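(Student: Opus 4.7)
The plan is to reduce this immediately to Lemma \ref{fibres.for.multisets}, which already computes fibres of induced multiset maps in terms of fibre sizes of the underlying set map. By definition $\rho_b^a = (f_b^a)_*$, so the lemma applies with $S = \Z/a\Z$, $T = \Z/b\Z$, and $f = f_b^a$, yielding
\beq
\# (\rho_b^a)^{-1}(G) = \prod_{j \in \Z/b\Z} \inlinebnm{\#(f_b^a)^{-1}(j)}{G(j)}.
\eeq

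The only thing left to check is that $\#(f_b^a)^{-1}(j) = \frac{a}{b}$ for every $j \in \Z/b\Z$. Since $b \mid a$, the reduction map $f_b^a$ is a group homomorphism between finite cyclic groups with kernel $(b \Z)/(a \Z)$ of order $\frac{a}{b}$. Hence every fibre has cardinality $\frac{a}{b}$, and substituting this into the displayed formula gives the claimed product.

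There is no real obstacle here; the statement is a direct specialization of Lemma \ref{fibres.for.multisets} once one observes that the reduction map $\Z/a\Z \to \Z/b\Z$ is $\frac{a}{b}$-to-one. The lemma could also be proved from scratch by choosing, for each $j \in \Z/b\Z$, an arbitrary multiset on the fibre $(f_b^a)^{-1}(j)$ of cardinality $G(j)$; the number of such choices is $\inlinebnm{a/b}{G(j)}$, and independence across $j$ yields the product.
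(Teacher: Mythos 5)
Your proof is correct and is exactly the paper's approach: the paper's proof of this lemma simply cites Lemma~\ref{fibres.for.multisets}, and you have merely spelled out the specialization $S = \Z/a\Z$, $T = \Z/b\Z$, $f = f_b^a$ together with the observation that each fibre of $f_b^a$ has cardinality $a/b$. Nothing further to add.
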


\begin{proof} This follows from Lemma \ref{fibres.for.multisets}.
\end{proof}
 
Given $\sigma \in \mc C_b$, let 
\beq
N_{\sigma}(k) =\# \{\lambda \in \mc C_{a} \mid \core_b \lambda = \sigma, \ell(\lambda) \leq k\}.
\eeq

\begin{thm}\label{warmup.theorem}
There is a quasipolynomial $Q_{\sigma}(k)$ of degree $a-b$ and period $b$, so that for $k \geq \ell(\sigma)$, we have $\Nsk=Q_{\sigma}(k)$. The leading coefficient of $Q_\sigma(k)$ is $\dfrac{1}{(\frac{a}{b}-1)!^{b}}$.
\end{thm}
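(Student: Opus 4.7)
My plan is to translate everything through the bijection $\A_a$ of Proposition \ref{prop: cores multisets bijection}, reduce to a product of binomial coefficients, and then read off the quasipolynomial behavior from the periodicity lemma.

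\textbf{Step 1 (translate to multisets).} By the commutative square of Proposition \ref{warm.up.commute}, the condition $\core_b \lambda = \sigma$ with $\ell(\lambda)\le k$ becomes $\rho_b^a(\A_a(\lambda)) = \A_b(\sigma) = H_{\sigma,b}^k$. Hence, via the bijection $\A_a$,
\[
N_\sigma(k) \;=\; \#\,(\rho_b^a)^{-1}\bigl(H_{\sigma,b}^k\bigr).
\]
Applying Lemma \ref{prop: cardinality of ro*} with $G = H_{\sigma,b}^k$ gives the closed form
\[
N_\sigma(k) \;=\; \prod_{j\in\Zb} \multi{a/b}{H_{\sigma,b}^k(j)}.
\]

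\textbf{Step 2 (extract periodicity in $k$).} Fix a residue $i \in \{0,1,\dots,b-1\}$ and, for $k\ge \ell(\sigma)$, pick $k_0$ in that residue class with $\ell(\sigma)\le k_0 < \ell(\sigma)+b$. Writing $k = k_0 + nb$ with $n\in\W$, iterated application of Lemma \ref{lemma: periodicity of betaset mod s set} yields
\[
H_{\sigma,b}^{k}(j) \;=\; H_{\sigma,b}^{k_0}(j) + n \;=:\; c_j^{(i)} + n
\]
for each $j \in \Zb$, where the constants $c_j^{(i)}$ depend only on $i$ (and $\sigma$).

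\textbf{Step 3 (polynomial in $n$).} Substituting back,
\[
N_\sigma(i+nb) \;=\; \prod_{j \in \Zb} \multi{a/b}{c_j^{(i)} + n} \;=\; \prod_{j \in \Zb} \binom{c_j^{(i)} + n + \tfrac{a}{b} - 1}{\tfrac{a}{b}-1}.
\]
Each factor is a polynomial in $n$ of degree $\tfrac{a}{b}-1$ with leading coefficient $\tfrac{1}{(\tfrac{a}{b}-1)!}$, since $\binom{X}{\tfrac{a}{b}-1} = \tfrac{X(X-1)\cdots(X-\tfrac{a}{b}+2)}{(\tfrac{a}{b}-1)!}$. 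Taking the product over the $b$ values of $j$ produces a polynomial $P_i(n)$ in $n$ of degree $b\bigl(\tfrac{a}{b}-1\bigr) = a-b$ with leading coefficient $\tfrac{1}{(\tfrac{a}{b}-1)!^{\,b}}$, independent of $i$.

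\textbf{Step 4 (assemble the quasipolynomial).} Defining $Q_\sigma$ on the coset $i+b\W$ by $Q_\sigma(i+nb)=P_i(n)$ produces a quasipolynomial of period $b$ agreeing with $N_\sigma(k)$ for all $k\ge \ell(\sigma)$. Its degree is $a-b$ and its leading coefficient (uniform across cosets, in the convention of the paper's definition) is $\tfrac{1}{(\tfrac{a}{b}-1)!^{\,b}}$, completing the proof.

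I do not anticipate a serious obstacle: once Lemma \ref{lemma: periodicity of betaset mod s set} is in hand the argument is a direct computation. The only mild care needed is tracking the threshold $k\ge \ell(\sigma)$, so that a representative $k_0$ in each residue class sits in the regime where the periodicity lemma applies.
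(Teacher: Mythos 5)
Your proposal is correct and follows essentially the same route as the paper's own proof: translate via $\A_a$ and Proposition \ref{warm.up.commute}, apply Lemma \ref{prop: cardinality of ro*} to get a product of multiset-coefficient factors, and then use the periodicity Lemma \ref{lemma: periodicity of betaset mod s set} to expose each factor as a polynomial in $n$ of degree $\tfrac{a}{b}-1$ with leading coefficient $\tfrac{1}{(\tfrac{a}{b}-1)!}$, whence the product is a polynomial of degree $a-b$ on each coset. The only difference is cosmetic: you spell out the choice of coset representatives $k_0$ and the assembly of $Q_\sigma$ from the $P_i$ more explicitly, which the paper leaves implicit.
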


\begin{proof} Let $c=\frac{a}{b}$.
By Proposition \ref{warm.up.commute} and Lemma~\ref{prop: cardinality of ro*}, for $\ell(\sigma) \leq i < ~\ell(\sigma)+~b$
we have
\begin{align*}
N_{\sigma}(i + nb) & = \prod\limits_{j=0}^{b-1}\inlinebnm{c}{H_{\sigma,b}^{i+nb}(j)}
\\
& = \prod\limits_{j=0}^{b-1} \inlinebnm{c}{H_{\sigma,b}^{i}(j) + n} \\
&= \prod\limits_{j=0}^{b-1}\binom{n+ H_{\sigma,b}^{i}(j) + c-1}{H_{\sigma,b}^{i}(j) + n} \\
&= \prod\limits_{j=0}^{b-1}\binom{n+ H_{\sigma,b}^{i}(j) + c-1}{c-1}.\\
\end{align*}
 Now each
 \beq
 \binom{n+ H_{\sigma,b}^{i}(j) +c-1}{c-1}
 \eeq
 is a polynomial function of $n$ of degree $c-1$ and leading term 
 \beq
 \frac{n^{c-1}}{(c-1)!}.
 \eeq
Therefore $N_{\sigma}(i + nb)$ is a polynomial in $n$ of degree $a-b$ and leading coefficient $\dfrac{1}{(\frac{a}{b}-1)!^{b}}$. Thus the restriction of $N_{\sigma}(k)$ to the coset $i+b \W$ is a polynomial, and the theorem follows.

This shows that for $k \geq \ell(\sigma)$, $N_{\sigma}(k)$ is a quasipolynomial of degree $a-b$ and leading coefficient $\dfrac{1}{(\frac{a}{b}-1)!^{b}}$.  
%The theorem follows by raising this to the power $b$.
\end{proof}

\begin{ex}
Let $a=6, b=2$, and $\sigma = (4,3,2,1) \in \mc C_2$. Then $\ell(\sigma) = 4$ and $c = \frac{a}{b}=3.$ We have
$$H_{\sigma}^4 =
\{7,5,3,1\} \text{  and  }H_{\sigma}^5 = \{8,6,4,2,0\} .$$
Hence, $H_{\sigma,2}^4(j) =\begin{cases}
0\text{ for } j =0\\
4\text{ for } j =1
\end{cases}$ and
$H_{\sigma,2}^5(j) =\begin{cases}
5\text{ for } j =0\\
0\text{ for } j =1.
\end{cases}$
\\
By Theorem~\ref{warmup.theorem}, for $n \geq 0$,
\begin{align*}
N_\sigma(4+2n) &= \binom{n+ 2}{2} \binom{n+ 6}{2} \\
&= \frac{1}{4}(n^4 +14n^3 + 65n^2 +112n + 60)
\end{align*}
 and \begin{align*}
 N_\sigma(5+2n) &= \binom{n+ 7}{2} \binom{n+ 2}{2} \\
 &= \frac{1}{4}(n^4 +16n^3 + 83n^2 + 152n + 84). 
 \end{align*}
\end{ex}

\section{Converting to a Multiset Matching Problem}  \label{sec: Converting to a Multiset Matching Problem} 

In this section we recall the map $\core_{s,t}$ from the introduction, and interpret the fibre counting problem in terms of multisets. By the usual Chinese Remainder Theorem, we may view $\Z/m\Z$ as the fibre product of $\Z/s\Z$ and $\Z/t\Z$ over $\Z/d\Z$. 
So we then investigate the effect of the functor $S \leadsto \Sk$ on a fibre product. This study allows us to express ~$\Nstk$ in terms of classical combinatorial constants arising in margin problems for integral matrices. Moreover we  give a factorization of $\Nstk$, which allows a reduction to the case where $s,t$ are relatively prime.

\subsection{The Map $\core_{s,t}$}

Let $s, t \in \mb N$, and put $d=\gcd(s,t)$ and $m=\lcm(s,t)$. Consider the map 
%$$\ro = \ro_{s,t} : \Zm \to \Zs \times \Zt$$ such that $\ro(x \mod m) = (x \mod s, x \mod t)$. ($\bigstar$"Fibre product") We define an analogous map for partitions as
\beq
\core_{s,t}: \mc C_{m} \rightarrow \mc C_s \times \mc C_t
\eeq
taking an $m$-core $\lambda$ to $(\core_s \lambda, \core_t \lambda)$. 
As in Proposition \ref{warm.up.commute}, we have a commutative diagram
\begin{equation}\label{CRT commutative diag}
\begin{tikzcd}
\mc C_{m}^k \arrow{r}{\core_{s,t}} \arrow{d}{\A_m} & \mc C_{s}^k \times \mc C_t^k \arrow{d}{\A_s \times \A_t}
\\
\Zmk \arrow{r}{\rho_{s,t}} & \Zsk \times \Ztk
\end{tikzcd}
\end{equation}
where the maps out of $\mc C_m^k$ and $\mc C_s^k \times \mc C_t^k$ are the bijections of Proposition~\ref{prop: cores multisets bijection}, and $$\rho_{s,t}= \rho^m_s \times \rho^m_t.$$

%Recall that $\rho^m_s$ and $\rho^m_t$ are as defined in Subsection \ref{subsec: Quotient map}. 

%We will see that the cardinality of the fibres of $\core_{s,t}$ are infinite. 

Let $\Nstk$ be the cardinality of the fibre of $\core_{s,t}$ over $(\sigma, \tau)$ for $k \in \mb N$. Thus
 $$\Nstk = \# \{\lambda \in \mc C_{m}^k \mid \core_s \lambda = \sigma, \core_t \lambda = \tau\}.$$ 
By the commutativity of \eqref{CRT commutative diag}, counting fibres of $\core_{s,t}$ is equivalent to counting fibres of $\rho_{s,t}$. % To understand the fibres of $\rho_{s,t}$, we investigate some more properties of multisets.

\subsection{Matchings}
%\subsection{Multisets on Fibre Products}

%By the usual Chinese Remainder Theorem, we may view $\Z/m\Z$ as the fibre product of $\Z/s\Z$ and $\Z/t\Z$ over $\Z/d\Z$. 
%To understand $\rho_{s,t}$ we now investigate the effect of the functor $S \leadsto \Sk$ on a fibre product.

For finite sets $S,T$, consider the projection maps $\pr_S: S \times T \to S$ and $\pr_T: S \times T \to T$.
There are corresponding multiset maps
$$(\pr_S)_* : \STk \to \Sk, \hspace{0.2 cm} \hspace{0.2 cm} (\pr_T)_* : \STk \to \Tk. $$

and $$\pr: \STk \to \Sk \times \Tk$$  given by $\pr= (\pr_S)_* \times (\pr_T)_*.$

\begin{defn}
Let $F \in \Sk$ and $G \in \Tk$.  We say that $\Phi \in \STk$ is a \emph{matching} from $F$ to $G$, provided  $\pr(\Phi) = (F, G)$.
\end{defn}

Say $|S|=m$ and $|T|=n$, with $S=\{x_1, \ldots, x_m\}$ and $T=\{y_1, \ldots, y_n\}$, 
Given $F,G$ as above, define vectors
\beq
\vec F=(F(x_1), \ldots, F(x_m))
\eeq
 and 
 \beq
 \vec G=(G(y_1), \ldots, G(y_n)).
 \eeq
So $k$ is the sum of the components of $\vec F$, and also  the sum of the components of   $\vec G$.

One says that an $m \times n$ matrix $A$ has \emph{row margins} $\vec F$, if $F(x_i)$ is the sum of the entries of the $i$th row for each $i$. Similarly $A$ has \emph{column margins} $\vec G$, if $G(y_i)$ is the sum of the entries of the $j$th column for each ~$j$.

\begin{prop} \label{prop: matching matrix bijection}  Given $F \in \inlinebnm{S}{k}$ and $G \in \inlinebnm{T}{k}$, there is a bijection between the set of matchings from $F$ to $G$ and the set of non-negative integral matrices with row margins $\vec F$ and column margins $\vec G$.
\end{prop}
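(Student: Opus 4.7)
The plan is to exhibit the bijection directly by reading off the entries of the matrix as multiplicities of the matching. Fix the enumerations $S=\{x_1,\ldots,x_m\}$ and $T=\{y_1,\ldots,y_n\}$ so that matrices are indexed the same way as $\vec F$ and $\vec G$. Define
\[
\Psi: \STk \supseteq \{\text{matchings from }F\text{ to }G\} \longrightarrow \M_{m \times n}(\ul{\mb N}), \qquad \Psi(\Phi)_{ij} = \Phi(x_i, y_j),
\]
and in the reverse direction, given a non-negative integer matrix $A = (a_{ij})$ with row margins $\vec F$ and column margins $\vec G$, define a multiset $\Phi_A$ on $S \times T$ by $\Phi_A(x_i, y_j) = a_{ij}$.

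The first step is to check that $\Psi(\Phi)$ has the required margins. Using the definition of the pushforward, for each $i$,
\[
\sum_{j=1}^{n} \Psi(\Phi)_{ij} = \sum_{j=1}^{n} \Phi(x_i, y_j) = \sum_{t \in T} \Phi(x_i, t) = (\pr_S)_*(\Phi)(x_i) = F(x_i),
\]
since $\pr(\Phi) = (F,G)$; the analogous computation for columns gives column margins $\vec G$. Entries are non-negative integers because $\Phi$ takes values in $\ul{\mb N}$. Conversely, given $A$ with margins $\vec F$ and $\vec G$, the total mass of $\Phi_A$ is $\sum_{i,j} a_{ij} = \sum_i F(x_i) = |F| = k$ (consistency with $\vec G$ gives the same value), so $\Phi_A \in \STk$; the margin computation above, read in reverse, shows $(\pr_S)_*(\Phi_A) = F$ and $(\pr_T)_*(\Phi_A) = G$, so $\Phi_A$ is a matching from $F$ to $G$.

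Finally I would observe that $\Psi$ and $A \mapsto \Phi_A$ are mutually inverse: both compositions return the original data entry-by-entry, since $\Psi(\Phi_A)_{ij} = \Phi_A(x_i,y_j) = a_{ij}$ and $\Phi_{\Psi(\Phi)}(x_i,y_j) = \Psi(\Phi)_{ij} = \Phi(x_i,y_j)$. There is really no obstacle here; the proposition is a direct translation between two presentations of the same combinatorial datum, and the only point that requires any attention is the bookkeeping that $|F| = |G| = k$ forces the total sum of the matrix to agree along rows and columns.
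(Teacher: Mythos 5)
Your proposal is correct and follows the same approach as the paper, which simply defines the correspondence $\Phi(x_i,y_j) = m_{ij}$ and declares it to be the required bijection. You have merely spelled out the routine verification (margin computation, mutual inverseness, that the total mass is $k$) that the paper leaves implicit.
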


\begin{proof} Suppose that $(m_{ij})$ is such a matrix.  Then 
\beq
 \Phi(x_i, y_j)=m_{ij}
 \eeq
  is a matching from $F$ to $G$, and this gives the required bijection.
\end{proof}

Write $M_{F,G}$ for the number of matrices with nonnegative integer entries having row margins $\vec F$ and column margins $\vec G$.
According to \cite[Corollary 8.1.4]{brualdi2006combinatorial}, if the sum of the components of $\vec F$ equals the sum of the components of $\vec G$, then $M_{F,G} \geq 1$.

\begin{cor}\label{card.pr.here}
The cardinality of the fibre of $\pr$ over $(F,G)$ is $M_{F, G}$. In particular,  $\pr$ is surjective.
\end{cor}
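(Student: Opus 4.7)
The plan is to read off both claims directly from the preceding material, with essentially no new work. By the definition just given, the fibre of $\pr$ over $(F,G)$ is precisely the set of matchings from $F$ to $G$, so the cardinality statement is nothing more than the bijection of Proposition~\ref{prop: matching matrix bijection} repackaged: that proposition already produces a bijection between matchings and nonnegative integer matrices with row margins $\vec F$ and column margins $\vec G$, and $M_{F,G}$ is by definition the number of such matrices.

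For the surjectivity claim, I would argue as follows. Pick any $(F,G) \in \Sk \times \Tk$. Since $F$ and $G$ each have cardinality $k$, the component sums of $\vec F$ and $\vec G$ both equal $k$, so in particular they are equal. Then the cited result \cite[Corollary 8.1.4]{brualdi2006combinatorial} (stated in the paragraph just above) guarantees $M_{F,G} \geq 1$. By the first part of the corollary the fibre has cardinality $M_{F,G} \geq 1$, hence is nonempty; as $(F,G)$ was arbitrary, $\pr$ is surjective.

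There is no real obstacle here: the work was done in Proposition~\ref{prop: matching matrix bijection} (translating matchings to margin-constrained matrices) and in the classical transportation-matrix existence result of Brualdi. The corollary is really just the packaging that will be used downstream, where knowing the explicit fibre size $M_{F,G}$ lets us enumerate $\Nstk$ by summing $M_{F,G}$ over the relevant pairs $(F,G)$, and where surjectivity of $\pr$ will be needed to ensure that every prescribed pair $(\sigma,\tau)$ of cores actually arises.
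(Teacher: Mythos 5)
Your proposal matches the paper's (implicit) argument exactly: the first claim is just a restatement of Proposition~\ref{prop: matching matrix bijection} together with the definition of $M_{F,G}$, and surjectivity follows from the Brualdi citation since $|F|=|G|=k$ forces the margin sums to agree. Nothing to add.
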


\subsection{Coprime Case}
\label{sec: Counting fibres of the Sun Tzu map}

%We apply the ideas developed above to express $\Nstk$ in terms of certain $M_{F,G}$.% We also give a factorization of $\Nstk$, which allows a reduction to the case where $s,t$ are relatively prime.

\begin{comment}
\begin{prop}
The image of $\core_{s,t}$ is
$$\mc C_s \times_{\mc C_d} \mc C_t = \{(\sigma, \tau) \in \mc C_s \times \mc C_t \mid \core_d \sigma = \core_d \tau\}.$$
\end{prop}

\begin{proof}
Let $S= \Zs, T = \Zt, D = \Zd,$ and $M = \Zm$. Note that $\rho_{s,t}$ factors through $\SDTk$ where $S \times_D T$ is the fibre product of the ``quotient maps'' from $S$ and $T$ to $D$.

$$ \begin{tikzcd}[column sep=small]
& \SDTk  \arrow[dr, "\epsilon"] & \\
 \Mk \arrow[rr, "\rho_{s,t} "] \arrow[ur, "\sim"] &                         & \Sk \times \Tk
\end{tikzcd}$$
By the Chinese Remainder Theorem we know that $n \mapsto (n \mod s,n \mod t)$ is a bijection from $M$ to $S \times_D T$. Hence  this induces a bijection $\Mk \overset{\sim}{\to} \SDTk$. We have $\Ima \epsilon = \SkDkTk$. Therefore $\Ima \rho_{s,t} = \SkDkTk$. 
\end{proof}

\end{comment}

In this subsection let $s,t$ be relatively prime, and set $m=st$.  Put $S=\Z/s\Z$, $T=\Z/t\Z$, and $M=\Z/m\Z$. The Chinese Remainder Theorem  gives a bijection 
\beq
\ms S: M \overset{\sim}{\to} S \times T
\eeq
and for each $n \geq 0$, the map $\rho_{s,t}$ is the composition
\beq
\multi{M}{n} \overset{\ms S_*}{\to} \multi{S \times T}{n} \overset{\pr}{\to} \multi{S}{n} \times \multi{T}{n}.
\eeq

Now let $\sigma \in \mc C_s$, and $\tau \in \mc C_t$. Put $\ell_0 = \max(\ell(\sigma), \ell(\tau))$ and fix  $i \geq \ell_0$.
 For each $k \geq 0$, the bijection
\beq
\ms A_s: \mc C_s^{i+mk} \overset{\sim}{\to} \multi{S}{i+mk}
\eeq
of Proposition \ref{prop: cores multisets bijection} maps $\sigma$ to 
\beq
F^k:=H_{\sigma,s}^{i+mk},
\eeq
with  notation as in \eqref{defn.of.H.thing}. Similarly $\ms A_t: \mc C_t^{i+mk} \overset{\sim}{\to} \multi{T}{i+mk}$ maps $\tau$ to
\beq
G^k:=H_{\tau,t}^{i+mk}.
\eeq

Note that $\ms S_*$ is a bijection. By Corollary \ref{card.pr.here}, we have
\beq
N_{\sigma,\tau}(i+mk) = M_{F^k,G^k}.
\eeq

Moreover by Lemma~\ref{lemma: periodicity of betaset mod s set} we know 
$$F^k(j) = H_{\sigma, s}^i(j) + tk \hspace{0.1 cm} \text{   and   } \hspace{0.1 cm} G^k(j) = H_{\tau,t}^i(j) + sk.$$

Putting all this together:

\begin{thm} \label{rel.prime.cor.today} When $s,t$ are relatively prime, then for $i \geq \ell_0$, we have
%\begin{equation} \label{blubbb}
\beq
N_{\sigma,\tau}(i+stk)=M_{F^k,G^k},
\eeq
%\end{equation}
where 
$$\vec F^k =(a_{0i}, a_{1i}, \ldots, a_{(s-1)i})   +kt(\underbrace{1,1,\ldots, 1}_{\text{$s$ times}})$$
and 
$$\vec G^k = (b_{0i}, b_{1i}, \ldots, b_{(t-1)i})+ks(\underbrace{1,1,\ldots, 1}_{\text{$t$ times}}),$$
with  $a_{ji} = H_{\sigma, s}^i(j)$ and $b_{ji}=H_{\tau,t}^i(j)$.
 \end{thm}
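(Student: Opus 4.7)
The plan is to assemble the pieces that have already been set up in the paragraphs immediately preceding the theorem; each step of the proof is essentially a pointer to an earlier result. First I would unwind the definition: $N_{\sigma,\tau}(i+stk)$ is the cardinality of the fibre of $\core_{s,t}$ over $(\sigma,\tau)$ inside $\mc C_m^{i+stk}$, where $m=st$. Then I would transport this cardinality across the commutative diagram \eqref{CRT commutative diag}. Since the three vertical arrows $\ms A_s$, $\ms A_t$, $\ms A_m$ are all bijections by Proposition~\ref{prop: cores multisets bijection}, the fibre has the same size as the fibre of $\rho_{s,t}$ over $(\ms A_s(\sigma), \ms A_t(\tau)) = (F^k, G^k)$.

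Next I would invoke the factorization $\rho_{s,t} = \pr \circ \ms S_*$ established just above the theorem. Because $\ms S : M \to S \times T$ is the Chinese Remainder Theorem bijection, the induced map $\ms S_*$ on multisets is itself a bijection, so fibres of $\rho_{s,t}$ correspond one-to-one with fibres of $\pr$ over the same point. Corollary~\ref{card.pr.here} then identifies the cardinality of the fibre of $\pr$ over $(F^k, G^k)$ with $M_{F^k, G^k}$, giving $N_{\sigma,\tau}(i+stk) = M_{F^k, G^k}$.

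To finish, I would read off the row and column margin vectors. Iterating Lemma~\ref{lemma: periodicity of betaset mod s set} — whose hypothesis $i \geq \ell(\sigma)$ (respectively $i \geq \ell(\tau)$) is guaranteed by $i \geq \ell_0$ and is preserved under increasing the length — the length of the beta set grows from $i$ to $i+stk$, which is $tk$ applications on the $s$-side and $sk$ applications on the $t$-side. Hence $H_{\sigma,s}^{i+stk}(j) = H_{\sigma,s}^i(j) + tk$ for each $j \in \Z/s\Z$ and $H_{\tau,t}^{i+stk}(j) = H_{\tau,t}^i(j) + sk$ for each $j \in \Z/t\Z$. Writing these out coordinatewise with $a_{ji} = H_{\sigma,s}^i(j)$ and $b_{ji} = H_{\tau,t}^i(j)$ yields exactly the displayed expressions for $\vec F^k$ and $\vec G^k$.

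There is no real obstacle here: all of the heavy lifting (the abacus bijection, the CRT-based identification of $\rho_{s,t}$ with $\pr$ on a multiset level, the matrix interpretation of matchings, and the periodicity of $H_{\lambda,t}^k$) is already proved. The only bit of care is the elementary bookkeeping $stk/s = tk$ and $stk/t = sk$ when iterating the periodicity lemma; the rest is a chain of canonical bijections.
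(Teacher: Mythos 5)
Your proposal matches the paper's own proof step for step: the same chain of identifications via the commutative diagram \eqref{CRT commutative diag}, the factorization $\rho_{s,t}=\pr\circ\ms S_*$, Corollary~\ref{card.pr.here} for the fibre size of $\pr$, and the iterated application of Lemma~\ref{lemma: periodicity of betaset mod s set} to compute the margin vectors. Your bookkeeping ($tk$ steps of size $s$ and $sk$ steps of size $t$) is exactly the detail the paper records, so this is a correct reconstruction of the intended argument.
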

 
 %\begin{proof}  \end{proof}

\subsection{A Factorization in the Noncoprime Case}

In this subsection we ``factor''  $N_{\sigma, \tau}$ into products of $N_{\sigma',\tau'}$ with the sizes of $\sigma'$ and $\tau'$ relatively prime.

Given sets $S, T, D$ and maps $f: S \to D$ and $g: T \to D$, we have the commutative diagram
\begin{equation}\label{comm diag: fibre prod}
\begin{tikzcd}
S \times_D T \arrow{r}{g'} \arrow[swap]{d}{f'} & T  \arrow{d}{g} 
\\
S \arrow{r}{f} & D
\end{tikzcd}
\end{equation}
where $$S \times_D T = \{(a,b) \in S \times T \mid f(a)=g(b)\}$$ is the fibre product of $f$ and $g$, and $f', g'$ are projections to $S$ and $T$. By applying the functor $S \leadsto \inlinebnm{S}{k}$ to \eqref{comm diag: fibre prod} we get another commutative diagram
\begin{center}
\begin{tikzcd}
  \SDTk
  \arrow[drr, bend left, "(g')_*"]
  \arrow[ddr, bend right, "(f')_*"]
  \arrow[dr, dotted, "{\epsilon}" description] & & \\
    & \SkDkTk \arrow[r, "(g_*)'"] \arrow[d, "(f_*)'"]
      & \Tk \arrow[d, "g_*"] \\
& \Sk \arrow[r, "f_*"] &\Dk
\end{tikzcd}
\end{center}
where 
\beq
\epsilon:  \SDTk \to \SkDkTk
\eeq
 is defined by
$$\epsilon(\Phi) = ((\pr_S)_*(\Phi), (\pr_T)_*(\Phi)).$$
Again, the maps $(f_*)'$ and $(g_*)'$ out of the fibre product $\SkDkTk$ are the projections.

Let $(F, G) \in \Sk \times \Tk$ such that $f_*(F) = g_*(G)$. For $j \in D$, let $S_j$ and $T_j$ be the fibres of $f$ and $g$ over $j$ respectively. Write $F_j$ for the restriction of $F$ to $S_j$, and $G_j$ for the restriction of $G$ to $T_j$.

\begin{prop}    \label{corollary: cardinality of epsilon}        %\label{theorem: epsilon is surjective}
The cardinality of the fibre of $\epsilon$ over $(F,G)$ is $\prod\limits_{j \in D} M_{F_j, G_j}$.
In particular,  $\epsilon$ is surjective.
 \end{prop}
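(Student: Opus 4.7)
The plan is to reduce this fibre-counting problem to a disjoint union of instances of Corollary \ref{card.pr.here}, by decomposing the fibre product $S \times_D T$ according to the value in $D$. The first step is to observe that $S \times_D T = \bigsqcup_{j \in D} S_j \times T_j$, so that any multiset $\Phi \in \SDTk$ restricts to multisets $\Phi_j$ on $S_j \times T_j$ with cardinalities $k_j$ satisfying $\sum_{j \in D} k_j = k$, and $\Phi$ is uniquely recovered from the tuple $(\Phi_j)_{j \in D}$.

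Next I would translate the condition $\epsilon(\Phi) = (F, G)$. Since the row over $a \in S_j$ of the ``matrix'' $\Phi$ meets only the block $S_j \times T_j$, the equality $(\pr_S)_*(\Phi) = F$ is equivalent to $(\pr_{S_j})_*(\Phi_j) = F_j$ for every $j \in D$, and similarly the $T$-projection condition decouples into $(\pr_{T_j})_*(\Phi_j) = G_j$ for every $j$. The hypothesis $f_*(F) = g_*(G)$ is precisely what makes this system consistent: for each $j$, both $|F_j|$ and $|G_j|$ equal the common value $f_*(F)(j) = g_*(G)(j)$, so the required common cardinality $k_j$ is well-defined and $\Phi_j \in \SdTdk$ lives in the expected level of the multiset functor.

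Therefore the fibre of $\epsilon$ over $(F,G)$ is in bijection with the Cartesian product over $j \in D$ of the fibres of the projection $\SdTdk \to \Sdk \times \Tdk$ over $(F_j, G_j)$. Applying Corollary \ref{card.pr.here} to each factor gives $M_{F_j, G_j}$, and multiplying yields $\prod_{j \in D} M_{F_j, G_j}$ as claimed. For surjectivity, since the row and column margins $\vec F_j$ and $\vec G_j$ have the same sum $k_j$, the cited result \cite[Corollary 8.1.4]{brualdi2006combinatorial} ensures $M_{F_j, G_j} \geq 1$, making the total nonzero.

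I do not expect a real obstacle here; the only point requiring care is the bookkeeping in the first paragraph, namely verifying that the decomposition $\Phi \mapsto (\Phi_j)_{j \in D}$ is a bijection between $\SDTk$ and the tuples $(\Phi_j) \in \prod_j \SdTdk{}$ with $\sum k_j = k$, and that it intertwines $\epsilon$ with the block-wise projection maps. Once this is in place the proposition is immediate from Corollary \ref{card.pr.here}.
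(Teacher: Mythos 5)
Your proposal is correct and takes essentially the same approach as the paper: both decompose $S \times_D T$ into the blocks $S_j \times T_j$, observe that the condition $\epsilon(\Phi)=(F,G)$ decouples into independent margin conditions on each block with common cardinality $k_j = f_*(F)(j) = g_*(G)(j)$, and then apply Corollary \ref{card.pr.here} to each block and multiply.
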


\begin{proof}
 Let $k_j=|F_j|$.  Then 
$$k_j = \sum\limits_{s \in S_j}F(s) = f_*(F)(j) = g_*(G)(j) = \sum\limits_{t \in T_j}G(t) = |G_j|.$$ 
For each $j \in D$, we have a surjective map 
$$\pr_j: \SdTdk \to \Sdk \times \Tdk$$
as before.

For each $j \in D$, pick a multiset $\Phi_j$ in the fibre of $\pr_j$ over $(F_j, G_j)$. This can be done in $M_{F_j,  G_j}$ ways (Corollary \ref{card.pr.here}). Now define the multiset $\Phi \in \SDTk$ as
$$\Phi(s,t) = \Phi_j(s,t) \text{\hspace{0.1 cm} if \hspace{0.1cm}} (s,t) \in S_j \times T_j.$$
The cardinality of $\Phi$ is
$$|\Phi| = \sum\limits_{j \in D}\sum\limits_{(s,t) \in S_j \times T_j}|\Phi_j(s,t)| = \sum\limits_{j \in D} k_j = k,$$ as required. From the construction of $\Phi$ it is clear that $\epsilon(\Phi) = (F, G)$.
\end{proof}

% \begin{cor}  \label{D=*.cor} The cardinality of the fibre of $\pr$ over $(F,G)$ is $M_{F,G}$.
 %\end{cor}

\begin{theorem}\label{prop: Nst = prodNsigmajtauj}
Let $s,t \in \mb N$, and put $\gcd(s,t) =d$ and $\lcm(s,t) =m$. Suppose $\sigma \in \mc C_s$ and $\tau \in \mc C_t$ with $\core_d(\sigma)=\core_d(\tau)$.
 Then for $i \geq \ell_0$ and $0 \leq j < d$, there exist $\frac{s}{d}$-cores $\sigma_j^i$, $\frac{t}{d}$-cores $\tau_j^i$ and nonnegative integers $\ell_j^i$ such that for all $k \geq 0$ we have
\beq
 N_{\sigma, \tau}(i +mk)= \prod_{j=0}^{d-1} N_{\sigma_{j}^i,\tau_{j}^i} \left(\ell_{j}^i + \frac{m}{d}k \right).
\eeq
\end{theorem}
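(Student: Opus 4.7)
The plan is to combine Proposition~\ref{corollary: cardinality of epsilon} with the generalized Chinese Remainder Theorem. Write $S = \mb Z/s\mb Z$, $T = \mb Z/t\mb Z$, $D = \mb Z/d\mb Z$, $M = \mb Z/m\mb Z$, and let $f : S \to D$, $g : T \to D$ be the reduction maps; CRT gives an isomorphism $M \overset{\sim}{\to} S \times_D T$, and applying $\mc M_{\fin}(-)$ at cardinality $n := i + mk$ produces $\multi{M}{n} \overset{\sim}{\to} \multi{S \times_D T}{n}$. Set $F = H^n_{\sigma,s}$ and $G = H^n_{\tau,t}$. The hypothesis $\core_d \sigma = \core_d \tau$ gives $f_*(F) = g_*(G)$, so $(F,G)$ lies in the fibre product $\multi{S}{n}\times_{\multi{D}{n}}\multi{T}{n}$. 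By the commutative diagram~\eqref{CRT commutative diag}, $N_{\sigma,\tau}(n)$ is the cardinality of the fibre of $\rho_{s,t}$ over $(F,G)$; composing with the CRT bijection and invoking Proposition~\ref{corollary: cardinality of epsilon}, this count equals $\prod_{j \in D} M_{F_j, G_j}$, where $F_j, G_j$ are the restrictions of $F, G$ to the fibres $S_j = f^{-1}(j)$ and $T_j = g^{-1}(j)$.

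For each $j \in D$, the fibre $S_j \subset S$ is a coset of size $s/d$; I would fix a bijection $S_j \simeq \mb Z/(s/d)\mb Z$ (and similarly $T_j \simeq \mb Z/(t/d)\mb Z$) and use these to transport $F_j$ and $G_j$ into multisets on $\mb Z/(s/d)\mb Z$ and $\mb Z/(t/d)\mb Z$. Set $\ell_j^i := H^i_{\core_d \sigma, d}(j)$; this is precisely the common cardinality of $F^0_j$ and $G^0_j$ at $k=0$, again thanks to $\core_d \sigma = \core_d \tau$. Proposition~\ref{prop: cores multisets bijection}, applied with modulus $s/d$ (resp.\ $t/d$), then produces unique cores $\sigma_j^i \in \mc C_{s/d}^{\ell_j^i}$ and $\tau_j^i \in \mc C_{t/d}^{\ell_j^i}$ whose length-$\ell_j^i$ beta multisets are the transported $F^0_j$ and $G^0_j$.

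To track the dependence on $k$, I would apply Lemma~\ref{lemma: periodicity of betaset mod s set}: replacing the length $i$ by $i + mk$ increases each coordinate of $F$ by $mk/s = tk/d$, hence each coordinate of $F_j$ by the same amount. Re-applying Lemma~\ref{lemma: periodicity of betaset mod s set} for modulus $s/d$, this translates exactly into $F^k_j = H^{\ell_j^i + (m/d)k}_{\sigma_j^i,\, s/d}$, and similarly $G^k_j = H^{\ell_j^i + (m/d)k}_{\tau_j^i,\, t/d}$. Since $s/d$ and $t/d$ are coprime with product $m/d$, Theorem~\ref{rel.prime.cor.today} identifies $M_{F^k_j, G^k_j}$ with $N_{\sigma_j^i, \tau_j^i}(\ell_j^i + (m/d)k)$. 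Taking the product over $j \in D$ yields the claimed factorization.

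The main obstacle is bookkeeping rather than any deep new idea: I need to verify that $|F^0_j| = |G^0_j|$ (which is where the hypothesis $\core_d \sigma = \core_d \tau$ enters essentially), to ensure $\ell_j^i \geq \max(\ell(\sigma_j^i), \ell(\tau_j^i))$ so that Theorem~\ref{rel.prime.cor.today} and Lemma~\ref{lemma: periodicity of betaset mod s set} apply at every $k \geq 0$, and to note that the arbitrary choice of bijection $S_j \simeq \mb Z/(s/d)\mb Z$ is irrelevant since $M_{F_j,G_j}$ depends only on the multiplicity vectors.
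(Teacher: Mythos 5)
Your proposal follows essentially the same route as the paper: reduce via diagram~\eqref{CRT commutative diag} and the CRT bijection $\mb Z/m\mb Z \simeq \mb Z/s\mb Z \times_{\mb Z/d\mb Z} \mb Z/t\mb Z$, apply Proposition~\ref{corollary: cardinality of epsilon} to factor the fibre count as $\prod_j M_{F_j,G_j}$, define $\sigma_j^i,\tau_j^i$ by pulling $F_j^0,G_j^0$ back through Proposition~\ref{prop: cores multisets bijection}, and identify each factor with $N_{\sigma_j^i,\tau_j^i}(\ell_j^i + (m/d)k)$. You are somewhat more explicit than the paper on two points that it leaves implicit — the choice of bijection $S_j \simeq \mb Z/(s/d)\mb Z$ needed before invoking Proposition~\ref{prop: cores multisets bijection} (and why it is immaterial), and the Lemma~\ref{lemma: periodicity of betaset mod s set} computation showing $F_j^k = H^{\ell_j^i + (m/d)k}_{\sigma_j^i, s/d}$ — but the argument is the same.
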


\begin{proof}
As above, we write $(H_{\sigma,s}^{i+mk})_j$ for the restriction of the multiset $H_{\sigma,s}^{i+mk}$ to 
\beq
(\Zs)_j= \{ x \in \Zs \mid x \equiv j \mod d \}.
\eeq
 Put $F_j^k=(H_{\sigma,s}^{i+mk})_j$ and $G_j^k=(H_{\tau,t}^{i+mk})_j$. Then by the commutative diagram \eqref{CRT commutative diag} and Proposition \ref{corollary: cardinality of epsilon}, we have
%\begin{equation}
\beq
N_{\sigma, \tau}(i+mk) = \prod\limits_{j=0}^{d-1}M_{F_j^k, G_j^k}
\eeq
%\end{equation}\label{eq: Nsigmatau equal prodMFjGj}
for $i \geq \ell_0$.
Via Proposition \ref{prop: cores multisets bijection}, define the $\frac{s}{d}$-core $\sigma_j^i$ so that
$$H_{\sigma_j^i, \frac{s}{d}}^{l_j^i} = F_j^0,$$
and the $\frac{t}{d}$-core $\tau_j^i$ by  
$$H_{\tau_j^i, \frac{t}{d}}^{l_j^i} = G_j^0$$
where $l_j^i = |F_j^0| = |G_j^0|.$ Then again by the commutative diagram and Corollary \ref{corollary: cardinality of epsilon} in the relatively prime case, $$N_{\sigma_{j}^i,\tau_{j}^i}\left(\ell_{j}^i + \frac{m}{d}k \right) = M_{F_j^k, G_j^k}.$$ This completes the proof.
\end{proof}

 \begin{ex} \label{example1: noncoprime}  
Let $s =4, t =6$ and $\sigma = (3,1,1), \tau = (3,2)$. Then $d = 2, m =12$, $H_{\sigma} = \{5,2,1\}$, $H_{\tau} = \{4,2\}$ and $\ell_0 =3$. Let $F_j^k$ and $G_j^k$ be the multisets as in the theorem above.
\bigskip

For $i=12$,
\begin{align*}
F_0^k = (3k+3, 3k+4), && G_0^k = (2k+3, 2k+3, 2k+1), &&\\
F_1^k = (3k+2,3k+3), && G_1^k = (2k+2, 2k+2,2k+1),&&\\
\sigma_0^{12} = (1), && \tau_0^{12} =(1,1), && \ell_0^{12}= 7,\\
\sigma_1^{12} = (1), && \tau_1^{12} =\emptyset, && \ell_1^{12}=5.
\end{align*}
Thus $$N_{\sigma, \tau}(12 + 12k) = N_{(1),(1,1)}(7+6k) \cdot N_{(1),\emptyset}(5+6k).$$
We will continue with this in Example \ref{ex2:noncoprime case}.
\end{ex}

\section{Preliminaries for Polytopes}
\label{sec: Preliminaries of polytopes and notation}

We now review notions concerning polytopes which we will need. A suitable reference is \cite[Section 4.6.2]{Stanley}.
 
 \subsection{Basic Terminology}
 
Given an $m \times n$ matrix $A$ and a vector $\vec b \in \mb R^m$, we define the polyhedron
$$\mc P(A, \vec b) = \{\vec x \in \mb R^n \mid A\vec x \leq \vec b, \vec x \geq 0\}.$$
Following convention, `$\vec v_1 \leq \vec v_2$' means that each component of $\vec v_1$  is less than or equal to the corresponding component of $\vec v_2$.

\begin{defn} 
We say that the pair $(A, \vec b)$ is \emph{of bounded type}, provided $\mc P(A, \vec b)$ is bounded.  A bounded polyhedron is called a \emph{polytope}.
\end{defn}
 %Note that, since an equality of the form $\vec{a} \cdot \vec{x} = b$ can be replaced by two inequalities $\vec{a} \cdot \vec{x} \leq b$ and $-(\vec{a} \cdot \vec{x}) \leq - b$, the constraints  defining a polytope can include both equalities and inequalities.
\begin{defn}
The \emph{dimension} of a polytope, $\dim(\mc P)$, is the dimension of the affine space spanned by $\mc P$:
\beq
\ASpan(\mc P) = \{\vec x + \lambda(\vec y- \vec x) \mid \vec x, \vec y \in \mc P,\lambda \in \mb R\}
\eeq
  When $\dim(\mc P) = d$, we call it a $d$-polytope. 
\end{defn}

%Next we give the `vertex description' of a polytope. 
Write $\conv(\{\vec v_1, \vec v_2, \ldots, \vec v_m\})$ for the convex hull of $\{\vec v_1, \vec v_2, \ldots, \vec v_m\} \subset$~$\mb R^n$.

\begin{defn}
A  \emph{convex polytope} $\mc P$ in $\mb R^n$ is the convex hull of an affine independent set $\{\vec v_1, \vec v_2, \ldots, \vec v_m\} \subset \mb R^n$, called the \emph{vertices} of $\mc P$.
If all vertices of $\mc P$ have integer coordinates,  then it is called a \emph{lattice polytope}.
\end{defn}

\begin{lemma}
\label{lem: P(A,b)empty}
Let $A$ be an $m \times n$ matrix and $\vec b, \vec c \in \R^m$. If
$\mc P(A, \vec b)=\emptyset$,  then $\mc P(A,\vec bk+ \vec c)=\emptyset$ for $k \gg 0$.
\end{lemma}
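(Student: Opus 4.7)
The plan is to produce a single ``certificate of infeasibility'' via Farkas' lemma that simultaneously rules out $\mc P(A, \vec b)$ and $\mc P(A, \vec b k + \vec c)$ for all large $k$. The relevant form of Farkas asserts: the system $\{A \vec x \leq \vec b,\ \vec x \geq 0\}$ has no solution iff there exists $\vec y \in \R^m$ with $\vec y \geq 0$, $A^T \vec y \geq 0$, and $\vec b^T \vec y < 0$. One can reduce this to the standard equality form by appending slack variables, writing the constraints as $[A \mid I_m] \binom{\vec x}{\vec s} = \vec b$ with $\vec x, \vec s \geq 0$.

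Granted the hypothesis $\mc P(A, \vec b) = \emptyset$, I would first extract such a witness $\vec y$. Then I would pair $\vec y$ with the shifted right-hand side and observe that
$(\vec b k + \vec c)^T \vec y \; = \; k\,(\vec b^T \vec y) + \vec c^T \vec y$,
which, since $\vec b^T \vec y < 0$, becomes strictly negative once $k$ exceeds the explicit threshold $\vec c^T \vec y / (-\vec b^T \vec y)$. For any such $k$, the same $\vec y$ is then a Farkas certificate of infeasibility for the system $A \vec x \leq \vec b k + \vec c,\ \vec x \geq 0$, so $\mc P(A, \vec b k + \vec c) = \emptyset$, as desired.

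The main (and essentially only) obstacle is citing Farkas' lemma in this precise inequality form; after that, the argument collapses to a one-line linear-algebra computation. No delicate continuity argument is needed, since we only require $k \gg 0$ and the inequality $\vec b^T \vec y < 0$ is strict. I expect the write-up in the paper to be just a few lines.
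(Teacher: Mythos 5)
Your proposal is correct and takes essentially the same approach as the paper: extract a Farkas certificate $\vec y$ from the emptiness of $\mc P(A, \vec b)$, then observe that the very same $\vec y$ certifies infeasibility of $\mc P(A, \vec b k + \vec c)$ for $k$ sufficiently large. Your write-up is, if anything, tighter than the paper's in two respects. First, the paper cites Schrijver's Corollary 7.1e for the bare system $A\vec x \leq \vec b$ (no nonnegativity on $\vec x$), whereas $\mc P(A,\vec b)$ also imposes $\vec x \geq 0$; one must absorb these constraints into the coefficient matrix as you do via slacks, which changes the dual condition to $A^T \vec y \geq 0$ rather than $\vec y A = 0$. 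Second, after finding $\vec y$ the paper singles out an index $i$ with $y_i > 0$ and $b_i < 0$, notes $y_i(b_i k + c_i) < 0$ for $k$ large, and from this asserts $\mc P(A,\vec b k + \vec c)=\emptyset$; but negativity of a single summand does not by itself yield $\vec y \cdot (\vec b k + \vec c) < 0$. Your direct computation $(\vec b k + \vec c)^T \vec y = k(\vec b^T \vec y) + \vec c^T \vec y$, combined with $\vec b^T \vec y < 0$, is the precise way to conclude, and gives the explicit threshold on $k$. So the two arguments ride the same Farkas horse, but your version fills in the steps the paper compresses.
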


\begin{proof}
For a matrix $A$ and a vector $\vec b$, the inequality $A \vec x \leq \vec b$ has a solution for $\vec x$, if and only if $\vec y \cdot \vec b \geq 0$ for each row vector $\vec y \geq \vec 0$ with $\vec yA = \vec 0$ \cite[Corollary 7.1 e, Farkas' Lemma (variant)]{schrijver1998theory}. By the hypothesis, there exists such a $\vec y$ with $\lnot (\vec y \cdot \vec b \geq 0)$. Therefore for some $i$, the $i$th component $y_i$ of $\vec y$ is positive, and the $i$th component $b_i$ of $\vec b$ is negative. But then for $k$ large 
\beq
y_i(b_ik+c_i)<0,
\eeq
where $c_i$ is the $i$th component of $\vec c$.
Therefore $\mc P(A,\vec bk+ \vec c)=\emptyset$.
\end{proof}

\begin{lemma} \label{lem: P(A,b) bounded}
Let $A$ be an $m \times n$  matrix and $\vec b, \vec b' \in \R^m$. Suppose $\mc P(A, \vec b) \neq \emptyset$. Then, $\mc P(A,\vec b)$ is bounded iff $\mc P(A,\vec b')$ is  bounded.
\end{lemma}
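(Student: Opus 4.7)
The plan is to reduce boundedness of $\mc P(A,\vec b)$ to a property that depends only on the matrix $A$, namely the triviality of the \emph{recession cone}
\[
C(A) \;=\; \{\vec y \in \R^n \mid A\vec y \leq \vec 0,\ \vec y \geq \vec 0\}.
\]
Note that $C(A)$ is defined purely in terms of $A$ and involves no reference to the right-hand side.

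First I would prove the key claim: for a nonempty polyhedron $\mc P(A,\vec b)$, boundedness is equivalent to $C(A)=\{\vec 0\}$. The easy direction is the contrapositive: if $\vec y \in C(A)$ with $\vec y \neq \vec 0$ and $\vec x_0 \in \mc P(A,\vec b)$ is any fixed point, then for every $t\ge 0$ the vector $\vec x_0+t\vec y$ satisfies $A(\vec x_0+t\vec y)\le \vec b + t \cdot \vec 0 = \vec b$ and $\vec x_0+t\vec y\ge \vec 0$, so lies in $\mc P(A,\vec b)$; letting $t\to\infty$ forces unboundedness. For the other direction, suppose $\mc P(A,\vec b)$ is unbounded. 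Choose $\vec x_n\in \mc P(A,\vec b)$ with $\|\vec x_n\|\to\infty$, and consider $\vec u_n=\vec x_n/\|\vec x_n\|$ on the unit sphere. By compactness a subsequence converges to some $\vec y$ with $\|\vec y\|=1$. From $\vec x_n\ge \vec 0$ and $A\vec x_n\le \vec b$ we get $\vec u_n\ge \vec 0$ and $A\vec u_n\le \vec b/\|\vec x_n\|$, so passing to the limit $\vec y\ge \vec 0$ and $A\vec y\le \vec 0$, i.e.\ $\vec y\in C(A)\setminus\{\vec 0\}$.

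Once the claim is established, the lemma follows essentially for free. If $\mc P(A,\vec b')$ is also nonempty, the claim applied to each of $\vec b$ and $\vec b'$ gives two equivalences with the same right-hand condition $C(A)=\{\vec 0\}$, so they are equivalent to each other. (In the applications of this lemma elsewhere in the paper one may combine it with Lemma~\ref{lem: P(A,b)empty} to rule out the pathological case where $\mc P(A,\vec b')$ is empty, i.e.\ vacuously bounded while $\mc P(A,\vec b)$ is unbounded; equivalently one may reformulate the conclusion as ``the recession cone is trivial iff'' to avoid empty-set edge cases.)

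The main obstacle is the limit-point argument in the $(\Rightarrow)$ direction of the claim, which requires picking a convergent subsequence of normalized unbounded points and verifying the limit lies in the recession cone; the inequality $A\vec u_n\le \vec b/\|\vec x_n\|$ has to be preserved componentwise in the limit, which is immediate but is where the whole argument really takes place. Everything else is bookkeeping. As an alternative, one could cite the standard decomposition $\mc P = \conv(V) + C(A)$ from polyhedral theory (e.g.\ \cite[Cor.~7.1b]{schrijver1998theory}) and read off both directions at once, but the elementary compactness argument above is self-contained and fits the spirit of the surrounding material.
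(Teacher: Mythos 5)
Your proof is correct and rests on the same idea as the paper's: boundedness of a nonempty $\mc P(A,\vec b)$ is equivalent to triviality of the recession (a.k.a.\ characteristic) cone $\{\vec y \mid A\vec y \le \vec 0,\ \vec y \ge \vec 0\}$, and this cone does not depend on $\vec b$. The difference is in presentation: the paper simply cites Schrijver (\cite[p.~100,~(5)]{schrijver1998theory}) for the equivalence, whereas you give a self-contained argument --- the easy direction by translating along a nonzero recession vector, and the hard direction by normalizing an unbounded sequence, passing to a convergent subsequence on the sphere, and checking that the componentwise inequalities survive in the limit. That compactness argument is correct and is indeed the crux; it is the standard elementary proof of the cited fact, so you have essentially unfolded the citation. (Minor point: the paper writes $\Char\cone\mc P = \{\vec y \mid A\vec y \le \vec 0\}$, omitting $\vec y \ge \vec 0$, presumably treating the nonnegativity constraints as having been folded into $A$; your explicit version is the one matching the definition of $\mc P(A,\vec b)$ used throughout.)

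You also flag an edge case the paper silently skips: if $\mc P(A,\vec b')$ is empty it is vacuously bounded, so the biconditional as literally stated can fail --- e.g.\ $A=(0)$, $\vec b=(1)$, $\vec b'=(-1)$ gives $\mc P(A,\vec b)=[0,\infty)$ unbounded and $\mc P(A,\vec b')=\emptyset$ bounded. This is a genuine (if harmless) imprecision: in the paper's sole application (inside the proof of Theorem~\ref{totunimod polytope polynomial}) the lemma is invoked only in the ``safe'' direction, from bounded $\mc P(A,\vec b)$ to bounded $\mc P(A,\vec bk+\vec c)$, for which the argument is airtight because $C(A)=\{\vec 0\}$ forces every $\mc P(A,\vec b')$ to be bounded, empty or not. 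Your suggestion to restate the lemma in terms of the recession cone, or to add a nonemptiness hypothesis on $\mc P(A,\vec b')$, would make the statement correct as a biconditional.
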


\begin{proof}
The \emph{characteristic cone} of a nonempty polytope $\mc P$ is defined as
$$
\Char \cone \mc P = \{\vec y \mid \vec x+\vec y \in \mc P\hspace{0.1 cm} \text{for all}\hspace{0.1 cm} \vec x \in \mc P\} = \{\vec y \mid A\vec y \leq \vec 0\}.
$$
The nonempty polytope $\mc P$ is bounded if and only if $\Char \cone \mc P = \{0\}$ \cite[page 100, (5)]{schrijver1998theory}. The $\Char \cone \mc P$ does not depend on $\vec b$ and hence the boundedness of the polytope $\mc P(A, \vec b)$ does not depend on $\vec b$.
\end{proof}

\begin{defn}
A matrix $A$ is said to be \emph{totally unimodular}, provided the determinant of each square submatrix of $A$ is $+1$, $-1$ or $0$.
\end{defn}

\begin{prop}[{\cite[Corollary~19.2a, Hoffman and Kruskal's Theorem]{schrijver1998theory}}]
\label{prop: Scrijver 1}
Let $A$ be an integral matrix. Then $A$ is totally unimodular if and only if for each integral vector $\vec b$ the polyhedron $\{\vec x \mid A\vec x \leq \vec b,  \vec x \geq 0\}$ is integral.
\end{prop}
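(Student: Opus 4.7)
The plan is to prove both implications of this classical equivalence. Although the statement is quoted as Hoffman--Kruskal from Schrijver, the argument is a clean two-step: vertices of $\mc P(A, \vec b)$ arise from tight subsystems, and total unimodularity of $A$ controls the inverses of those subsystems.

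For the $(\Rightarrow)$ direction, assume $A$ is totally unimodular and let $\vec v$ be a vertex of $\mc P(A,\vec b) \subset \R^n$. Any vertex must satisfy $n$ linearly independent constraints at equality, chosen from $A\vec x \leq \vec b$ together with $\vec x \geq \vec 0$. Partition $\{1,\dots,n\}$ into $J$ (the coordinates where $v_j = 0$ is tight) and $J^c$, and pick an index set $I$ of $|J^c|$ rows of $A$ that are tight at $\vec v$ and whose restriction $A_{I, J^c}$ is nonsingular. Then $A_{I,J^c}\,\vec v_{J^c} = \vec b_I$, so $\vec v_{J^c} = (A_{I,J^c})^{-1} \vec b_I$. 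Because $A_{I,J^c}$ is a nonsingular square submatrix of a totally unimodular matrix, its determinant is $\pm 1$; by Cramer's rule its inverse is integer, hence $\vec v_{J^c} \in \Z^{|J^c|}$ and $\vec v \in \Z^n$.

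For the $(\Leftarrow)$ direction, let $B$ be any nonsingular $k \times k$ submatrix of $A$. The goal $|\det B| = 1$ reduces, via $(\det B)(\det B^{-1}) = 1$, to showing $B^{-1}$ is integer, and this in turn reduces to showing each column $B^{-1}\vec e_i$ is integer. To this end I would construct an integer vector $\vec b$ and a vertex $\vec y$ of $\mc P(A, \vec b)$ whose relevant block equals $\vec t + B^{-1}\vec e_i$ for a large integer $\vec t$ (so that $\vec y \geq 0$), with the remaining $n-k$ coordinates of $\vec y$ equal to $0$. The rows of $A$ corresponding to $B$ are made tight by setting their right-hand sides to $B\vec t + \vec e_i \in \Z^k$; the other rows of $A$ are given enough integer slack to be satisfied; and the $n-k$ nonnegativity constraints $x_j \geq 0$ are tight on the zero block. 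This gives $n$ tight, linearly independent constraints at $\vec y$, so $\vec y$ is a vertex, and by the integrality hypothesis $\vec y \in \Z^n$. Thus $B^{-1}\vec e_i$ is integer, as required.

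The main obstacle, unsurprisingly, is the reverse direction --- specifically, the bookkeeping in the construction of $\vec b$ and $\vec y$. One has to choose $\vec t$ as a sufficiently large positive integer multiple of $\vec 1$ (so that $\vec t + B^{-1}\vec e_i \geq 0$), then define the non-active coordinates of $\vec b$ by rounding up the corresponding components of $A \vec y$, and finally verify that the $k$ tight rows of $A$ together with the $n-k$ tight nonnegativity constraints form a linearly independent system at $\vec y$ --- this last point uses that $B$ is nonsingular and that the zero block is disjoint from the columns of $B$. Once this is verified, integrality of the vertex $\vec y$ transfers column by column to $B^{-1}$.
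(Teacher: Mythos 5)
The paper does not give a proof of this proposition at all: it is quoted as a known result (Hoffman and Kruskal's theorem) and attributed to Schrijver's book. So there is nothing in the paper to compare against; you have supplied a proof of a cited theorem. That said, your sketch is correct, and it is essentially the standard argument (and close to Schrijver's own). The $(\Rightarrow)$ direction is the usual vertex analysis: at a vertex $\vec v$ one selects $n$ tight, linearly independent constraints; after removing the tight nonnegativity constraints $x_j = 0$, $j\in J$, the remaining tight rows of $A$ restricted to the columns $J^c$ must have full rank $|J^c|$ (otherwise the tight system would not pin down $\vec v$), so a nonsingular square submatrix $A_{I,J^c}$ exists, and its inverse is integral by total unimodularity and Cramer's rule. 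The $(\Leftarrow)$ direction is the clever one: to show a nonsingular $k\times k$ submatrix $B$ has $|\det B|=1$, you prove $B^{-1}$ is integral column by column by manufacturing an integral right-hand side $\vec b$ for which $\vec t + B^{-1}\vec e_i$ (with $\vec t$ a large integral shift forcing nonnegativity, and zeros on the remaining coordinates) is a vertex of $\mc P(A,\vec b)$; the block structure $\left(\begin{smallmatrix} B & C \\ 0 & I\end{smallmatrix}\right)$ of the tight system gives linear independence, and the integrality hypothesis then forces $B^{-1}\vec e_i\in\Z^k$. The only bookkeeping you gloss over is routine: existence of the nonsingular $A_{I,J^c}$ in the forward direction, and the choice of slack (rounding up) for the inactive rows of $A$ in the reverse direction, both of which you flag and handle correctly.
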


 \subsection{Relative Volume}
When $\mc P \subset \mb R^n$ is a lattice polytope, not necessarily of dimension ~$n$, there is yet a ``relative volume'' of $\mc P$, which 
we recall from \cite[Section 4.6]{Stanley}. Let $d=\dim \mc P$.

Since $\mc P$ is a lattice polytope, the intersection of $\ASpan(\mc P)$ with $\Z^n$ is a translation of a free abelian group of rank $d$.  Therefore there is an affine isomorphism
\beq
T: \ASpan(\mc P)  \xrightarrow{\sim}   \mb R^d
\eeq
with 
\beq
T(\ASpan(\mc P) \cap \Z^n) \xrightarrow{\sim}  \Z^d.
\eeq
The \emph{relative volume} of $\mc P$ is the volume of $T(\mc P)$, and is independent of the choice of $T$. Of course when $\dim(\mc P) = n$, the relative volume agrees with the
usual volume.

\subsection{Transportation Polytopes}
Let $\vec r =(r_1, r_2, \ldots, r_s)$ and $\vec c  =(c_1, c_2, \ldots, c_t)$ be vectors whose components are nonnegative integers, and with $\sum r_i=\sum c_j$.  The nonnegative real matrices with row sum $r_i$ and column sum $c_j$ form a polytope $\mc M(\vec r, \vec c)$ called the \emph{transportation polytope} for margins $\vec r$ and ~$\vec c$. According to  \cite[Theorem 8.1.1]{brualdi2006combinatorial}, this polytope has dimension $(s-1)(t-1)$. By \cite[Section 2]{jurkat},  $\mc M(\vec r, \vec c)$ is a lattice polytope. Write $\M_{s,t}$ for the set of $s \times t$ real matrices.
Let $\pi : \M_{s,t} \rightarrow \M_{s-1, t-1}$ be the map defined by  omitting the last row and column. We put
\beq
\mc M'(\vec r,\vec c)=\pi( \mc M(\vec r,\vec c)).
\eeq
 
\begin{prop}  \label{transport.A.b.form}
The polytope $\mc M'(\vec r,\vec c)$ has dimension $(s-1)(t-1)$.  It takes the form $P(A,\vec b)$, where $A$ is a totally unimodular matrix, and $(A,\vec b)$ is of bounded type.
The integer points of  $\mc M(\vec r,\vec c)$ are mapped bijectively by $\pi$ onto the integer points of $\mc M'(\vec r,\vec c)$. The volume of $\mc M'(\vec r,\vec c)$ is the relative volume of 
$\mc M(\vec r,\vec c)$. 
\end{prop}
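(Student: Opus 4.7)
The plan is to carry out four tasks: establish the integer-point bijection via $\pi$, present $\mc M'$ explicitly as $\mc P(A,\vec b)$, verify that $A$ is totally unimodular, and identify $\vol \mc M'$ with the relative volume of $\mc M$. The first two are bookkeeping; the third is where the real work lies.

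For the bijection and the inequality presentation, I would observe that once an inner block $(x_{ij})_{i<s,\, j<t}$ is specified, the margin equations force $x_{i,t}=r_i-\sum_{j<t} x_{ij}$ and $x_{s,j}=c_j-\sum_{i<s} x_{ij}$, and then the corner entry $x_{s,t}$ is uniquely determined, the two natural expressions for it agreeing precisely because $\sum_i r_i=\sum_j c_j$. Hence $\pi$ is an affine bijection onto $\mc M'$ whose inverse has integer coefficients, which simultaneously yields the integer-point bijection and the identity $\dim\mc M'=\dim\mc M=(s-1)(t-1)$. Requiring the reconstructed entries to be nonnegative produces four families of inequalities in the block variables: (I) $-x_{ij}\leq 0$; (II) $\sum_{j<t} x_{ij}\leq r_i$ for $i<s$; (III) $\sum_{i<s} x_{ij}\leq c_j$ for $j<t$; and (IV) the single corner inequality $-\sum_{i<s,\,j<t} x_{ij}\leq r_s-\sum_{j<t} c_j$. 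Stacking these gives the matrix $A$ and vector $\vec b$ with $\mc M'(\vec r,\vec c)=\mc P(A,\vec b)$.

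The main obstacle is showing that $A$ is totally unimodular, for which I plan to invoke the Ghouila-Houri criterion: every row subset $R$ of $A$ must admit a signing $\epsilon:R\to\{\pm 1\}$ such that each entry of $\sum_{r\in R}\epsilon(r)A_r$ lies in $\{-1,0,1\}$. The key is to split by row type, with subcases according to whether the corner row (IV) lies in $R$. If (IV) is absent, sign the (II) rows as $+1$ and the (III) rows as $-1$; the contribution at column $(i,j)$ is then $[\text{row $i$ in } R]-[\text{column $j$ in } R]\in\{-1,0,1\}$. If (IV) is present, instead sign (II), (III), and (IV) all as $+1$; the contribution becomes $[\text{row $i$ in } R]+[\text{column $j$ in } R]-1\in\{-1,0,1\}$. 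In either regime a type-(I) row touches only one column and its sign may be chosen column by column to keep the total within $\{-1,0,1\}$. Recognizing that the corner row forces a different sign convention is the essential insight; with it the verification is mechanical.

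For boundedness, $\mc M(\vec r,\vec c)$ is contained in the box $\prod_{i,j}[0,\min(r_i,c_j)]$, so its affine image $\mc M'$ under $\pi$ is bounded, meaning $(A,\vec b)$ is of bounded type. For the volume, extend $\pi^{-1}$ to an affine map $\tilde\pi:\R^{(s-1)(t-1)}\to\R^{st}$ via the reconstruction formula; since both $\tilde\pi$ and the restriction of $\pi$ to the image of $\tilde\pi$ have integer coefficients, $\tilde\pi$ restricts to a bijection between $\Z^{(s-1)(t-1)}$ and $\ASpan(\mc M(\vec r,\vec c))\cap\Z^{st}$. Thus $\tilde\pi^{-1}$ serves as the map $T$ in the definition of relative volume, and $\vol\mc M'(\vec r,\vec c)$ equals the relative volume of $\mc M(\vec r,\vec c)$, completing the proposition.
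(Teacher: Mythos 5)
Your proposal is correct, and for the dimension, integer-point bijection, boundedness, and volume claims it follows essentially the same route as the paper: present $\mc M'$ via the $s+t-1$ margin/corner inequalities, observe that $\pi$ restricted to $\ASpan(\mc M(\vec r,\vec c))$ is an affine isomorphism onto $\M_{s-1,t-1}$ with integer-coefficient inverse, and conclude. Where you genuinely diverge is the total unimodularity of $A$. The paper proves it indirectly: it cites Jurkat's result that every transportation polytope $\mc M(\vec r_*,\vec c_*)$ is a lattice polytope, notes that therefore $\mc P(A,\vec b_*)$ is a lattice polytope for every integral $\vec b_*$, and then invokes the converse direction of the Hoffman--Kruskal theorem (the paper's Proposition~\ref{prop: Scrijver 1}) to conclude $A$ is TU. You instead verify TU directly via the Ghouila-Houri row-signing criterion, with the observation that the corner row forces a switch from the ``$+/-$'' signing of the row and column constraints to a uniform ``$+$'' signing, and your case check is correct. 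Your approach is more self-contained, avoiding the appeal to Jurkat, at the cost of invoking a second standard TU characterization; the paper's approach is shorter given the cited input. One small mismatch: you list the nonnegativity constraints $-x_{ij}\leq 0$ as rows of $A$, whereas the paper's $\mc P(A,\vec b)$ already carries $\vec x\geq 0$ as a side condition, so those rows are redundant in its convention. This is harmless --- appending $\pm$ standard basis vectors preserves TU, and your signing argument correctly absorbs them column by column --- but worth aligning with the paper's setup so that the matrix $A$ you produce matches the one fed into Theorem~\ref{totunimod polytope polynomial} (which assumes $A$ has no zero rows and whose proof eliminates variables from $A$ directly).
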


\begin{proof}
The polytope $\mc M'(\vec r,\vec c) \subset \M_{s-1, t-1}$ comprises the nonnegative solutions to the following $s+t-1$ constraints:
%\begin{equation} \label{M'(r,c).eq}
\beq
\begin{split} 
\sum\limits_{j=1}^{t-1}x_{ij} &\leq r_i \: \: \: \text{ for } 1 \leq i \leq s-1\\
\sum\limits_{i=1}^{s-1}x_{ij} &\leq c_j \: \: \: \text{ for } 1 \leq j \leq t-1\\
- \sum\limits_{i=1}^{s-1}\sum\limits_{j=1}^{t-1}x_{ij}  &\leq  r_s - \sum\limits_{j=1}^{t-1}c_j. \\
\end{split}
%\end{equation}
\eeq

In particular, we may write
\beq
\vec b=\left(r_1,\ldots, r_{s-1},c_1, \ldots, c_{t-1}, b_{s+t-1} \right)^t,
\eeq
where
\beq
b_{s+t-1}= r_s-\sum_{j=1}^{t-1} c_j=c_t-\sum_{i=1}^{s-1} r_i.
\eeq
If $A$ is the evident $(s+t-1) \times (s-1)(t-1)$ coefficient matrix, then $\mc M'(\vec r,\vec c)=\mc P(A,\vec b)$.

Conversely, given an integral vector $\vec b_*=(b_1, \ldots, b_{s+t-1})^t \in \Z^{s+t-1}$, the polytope $\mc P(A,\vec b_*)=\pi(\mc M(\vec r_*,\vec c_*))$, with
\beq
\vec r_*=(b_1, \ldots, b_{s-1}, \sum_{i=0}^{t-1}b_{s+i})^t \text{ and } \vec c_*=(b_s, b_{s+1}, \ldots, b_{s+t-2}, b_{s+t-1}+\sum_{i=1}^{s-1} b_i)^t.
\eeq

Again  by \cite[Section 2]{jurkat},  $\mc M(\vec r_*, \vec c_*)$ is a lattice polytope, when  $\vec r_*,\vec c_*$ are nonnegative. (Otherwise it is empty, still technically a lattice polytope.)
Therefore the projection $\mc P(A, \vec b_*)$ is also a lattice polytope for each integral $\vec b_*$. Proposition \ref{prop: Scrijver 1} now implies  that the matrix $A$ is totally unimodular.

Next, write $\mc A \mc M(\vec r, \vec c)$ for the set of real $s \times t$ matrices with margins $\vec r$ and $\vec c$.  One checks that
 \beq 
 \ASpan(\mc M(\vec r, \vec c))=\mc A \mc M(\vec r, \vec c),
\eeq
and moreover that the restriction
 \beq
T:  \ASpan (\mc M(\vec{r}, \vec{c})) \xrightarrow{\sim}  \M_{s-1,t-1}
\eeq
 of $\pi$ is an affine isomorphism, giving a bijection on integer points. This gives the dimension and relative volume assertions.   
 Boundedness is clear. \end{proof}

%For later use, note that $A$ is an $st \times (s-1)(t-1)$ matrix, with no zero rows.

\begin{ex}        \label{Ex.V23}  

Let $\mc P$ be the transportation polytope for row margins $(2,2,2)$ and column margins $(3,3)$. Then $\ASpan(\mc P)$ is the affine space of matrices of the form

\begin{equation} \label{v23}
\begin{pmatrix}

x & 2-x \\

y & 2-y \\

3-x-y & x+y-1 \\

\end{pmatrix},
\end{equation}
for $x,y \in \R$, and $\mc P$ is the subset of $\ASpan(\mc P)$ defined by the constraints $1 \leq x+y \leq 3$ and $0 \leq x,y \leq 2$.  

The map $T$ taking \eqref{v23} to $(x,y)$ is an affine isomorphism to $\R^2$, taking integer points to integer points. Then the relative volume of $\mc P$, i.e., the volume of $T(\mc P)$, is the area of the region in Figure 1, which is ~$3$.
\begin{figure}[!h]\label{Figure 1}
\begin{tikzpicture}[scale=2]
\draw (1,0) -- (2,0) -- (2,1) -- (1,2) -- (0,2) -- (0,1)--(1,0);
\node[label=left:{$(1,0)$}] at (1,0) {};
\node[label=right:{$(2,0)$}] at (2,0) {};
\node[label=right:{$(2,1)$}] at (2,1) {};
\node[label=right:{$(1,2)$}] at (1,2) {};
\node[label=left:{$(0,2)$}] at (0,2) {};
\node[label=left:{$(0,1)$}] at (0,1) {};
\end{tikzpicture}
\caption{$T(\mc P)$}
\end{figure}

%Thus $V_{2,3}=3$. 

\end{ex}

\begin{remark} It is notoriously difficult to compute such volumes in general. The transportation polytope for $n \times n$ matrices with row and column margins  $(1, \ldots, 1)$ is the famous Birkhoff polytope. Finding its volume is an open problem; see for instance \cite{de2009generating}.
\end{remark}

\section{Counting Integer Points in Lattice Polytopes}
\label{sec: Counting integer points in polytopes}

In this section, we adapt the Ehrhart theory of counting integer points in lattice polytopes to accommodate our  families of transportation polytopes.

\begin{thm} (Ehrhart, see {\cite[Corollary~4.6.11]{Stanley}})
\label{thm: Ehrhart}
Let $\mc P$ be a lattice $d$-polytope in $\mb R^n$.  Then the number of integer points in the polytope 
\beq
k\mc P = \{k \vec \alpha \mid \vec \alpha \in \mc P\},
\eeq
with $k$ a positive integer, is a polynomial in $k$ of degree $d$, with leading coefficient equal to the relative volume of $\mc P$.
\end{thm}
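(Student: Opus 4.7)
The plan is to follow the classical cone-and-generating-function proof. First triangulate $\mc P$ into lattice $d$-simplices $\Delta_1, \ldots, \Delta_N$ with pairwise disjoint interiors, meeting along common lattice faces of lower dimension. Writing $L_\mc P(k) = \#(k\mc P \cap \Z^n)$, inclusion--exclusion expresses $L_\mc P(k)$ as an alternating sum of integer-point counts of the $\Delta_i$ and their common faces, the latter being lattice polytopes of strictly smaller dimension. Inducting on $\dim \mc P$ (with the zero-dimensional case being trivial), the face contributions are polynomials of degree strictly less than $d$. Thus it is enough to establish the result for a single lattice $d$-simplex.

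For a lattice $d$-simplex $\Delta$ with vertices $v_0, \ldots, v_d \in \Z^n$, I would introduce the cone
\beq
C = \left\{\sum_{i=0}^d \lambda_i (v_i, 1) \; : \; \lambda_i \geq 0 \right\} \subset \R^{n+1}
\eeq
and observe that lattice points of $C$ at height $k$ (last coordinate equal to $k$) correspond bijectively to integer points of $k\Delta$. Because the vectors $(v_i, 1)$ are linearly independent, every lattice point of $C$ decomposes uniquely as $\vec p + \sum_{i} n_i (v_i, 1)$ with $n_i \in \W$ and $\vec p$ in the half-open fundamental parallelepiped $\Pi = \{\sum \lambda_i (v_i, 1) : 0 \leq \lambda_i < 1\}$. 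Since $\Pi$ is bounded, $\Pi \cap \Z^{n+1}$ is finite, and each of its points has height in $\{0, 1, \ldots, d\}$. Setting $h^*(z) = \sum_{\vec p \in \Pi \cap \Z^{n+1}} z^{\mathrm{ht}(\vec p)}$, this decomposition yields the Ehrhart series identity
\beq
\sum_{k \geq 0} L_\Delta(k) \, z^k \; = \; \frac{h^*(z)}{(1-z)^{d+1}},
\eeq
with $\deg h^* \leq d$. A partial-fractions expansion then forces $L_\Delta(k)$ to agree with a polynomial in $k$ of degree at most $d$.

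For the leading coefficient, I would invoke the standard Riemann-sum argument: $L_\mc P(k)/k^d$ is a $\tfrac{1}{k}$-grid approximation to the relative volume of $\mc P$ taken with respect to the intrinsic lattice $\ASpan(\mc P) \cap \Z^n$, so $L_\mc P(k)/k^d$ converges to the positive number $\vol_{\mathrm{rel}}(\mc P)$ as $k \to \infty$. Combined with the polynomiality already established, this pins down the degree at exactly $d$ and identifies the leading coefficient with the relative volume.

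The main obstacle is a bookkeeping one: when $d < n$, the integer points of $\Pi$ and the relative volume of $\mc P$ must both be measured inside the same intrinsic lattice on $\ASpan(\mc P)$. The cleanest remedy is to first apply the affine isomorphism $T$ from the definition of relative volume to replace $\mc P \subset \R^n$ by $T(\mc P) \subset \R^d$, reducing to the full-dimensional situation where the cone construction, generating function, and Riemann-sum arguments above all apply verbatim.
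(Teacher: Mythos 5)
The paper does not supply a proof of this theorem; it is invoked as a black box, quoted from Corollary~4.6.11 of Stanley. There is therefore no ``paper proof'' to compare against. Your proposal is a correct sketch of the standard cone-and-generating-function argument (as in Stanley's own treatment, or Beck--Robins): triangulate $\mc P$ into lattice simplices, cone over each simplex, decompose lattice points of the cone by the half-open fundamental parallelepiped to obtain the rational generating function $h^*(z)/(1-z)^{d+1}$, conclude polynomiality by partial fractions, and identify the leading coefficient via the Riemann-sum limit $L_{\mc P}(k)/k^d \to \mathrm{vol}_{\mathrm{rel}}(\mc P)$. You have also correctly flagged the one genuinely delicate point, namely that when $d<n$ the lattice-point counts in $\Pi$ and the volume in the Riemann sum must both be taken relative to the intrinsic lattice $\ASpan(\mc P)\cap\Z^n$; transporting to $\R^d$ via the affine isomorphism $T$ from the definition of relative volume is exactly the clean fix and matches how the paper itself sets up relative volume. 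The only cosmetic remark is that the inductive framing in your first paragraph is somewhat redundant: once the cone argument establishes the result for lattice simplices of \emph{every} dimension, the inclusion--exclusion over cells of the triangulation already expresses $L_{\mc P}(k)$ as an integer combination of simplex Ehrhart polynomials, so no separate induction on $\dim\mc P$ is needed.
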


\bigskip

For $(A, \vec b)$ of bounded type, write $N(A, \vec b)$ for the number of integer points in the polytope $\mc P(A, \vec b)$.

\begin{prop}
\label{Ehrhart for totunimod}
Let $A$ be an $m \times n$ totally unimodular matrix and $\vec b \in \mb Z^m$. Suppose $(A, \vec b)$ is of bounded type and that $\mc P(A,\vec b) \neq \emptyset$. 
Then there is a polynomial $f(k)$ so that for positive integers $k$, we have $N(A, \vec bk)=f(k)$. Moreover $\deg f=\dim(\mc P(A,\vec b))$ and the leading coefficient of $f$ is the relative volume of $\mc P(A,\vec{b})$.
 
\end{prop}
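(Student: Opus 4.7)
The plan is to reduce the statement to Ehrhart's Theorem (\ref{thm: Ehrhart}). Two inputs are needed: first, that scaling $\vec b$ by a positive integer $k$ dilates the polytope by a factor of $k$; and second, that the polytope $\mc P(A, \vec b)$ is in fact a lattice polytope so that Ehrhart's Theorem applies.

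First I would establish the scaling identity
\beq
\mc P(A, k\vec b) = k \cdot \mc P(A, \vec b)
\eeq
for every positive integer $k$. This is a direct verification: the substitution $\vec y = \vec x / k$ converts the inequalities $A \vec x \leq k \vec b,\ \vec x \geq \vec 0$ into $A\vec y \leq \vec b,\ \vec y \geq \vec 0$, giving a bijection between the two sets. In particular the dimension is preserved and $k \cdot \mc P(A,\vec b)$ is nonempty and bounded.

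Next I would use total unimodularity. Since $A$ is totally unimodular and $\vec b \in \Z^m$, Hoffman and Kruskal's Theorem (Proposition~\ref{prop: Scrijver 1}) says that $\mc P(A,\vec b)$ is an integral polyhedron, hence (being bounded and nonempty) a lattice polytope in $\R^n$. Set $d = \dim \mc P(A,\vec b)$. Then Ehrhart's Theorem~\ref{thm: Ehrhart} applied to $\mc P(A, \vec b)$ produces a polynomial $f(k)$ of degree $d$ with leading coefficient equal to the relative volume of $\mc P(A,\vec b)$, such that $|k \cdot \mc P(A,\vec b) \cap \Z^n| = f(k)$ for all positive integers $k$.

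Combining the two steps gives
\beq
N(A, \vec b k) = |\mc P(A, k\vec b) \cap \Z^n| = |k \cdot \mc P(A, \vec b) \cap \Z^n| = f(k),
\eeq
which is exactly the claim, with $\deg f = d = \dim \mc P(A,\vec b)$ and the asserted leading coefficient. There is no real obstacle here; the content of the proposition lies entirely in the fact that total unimodularity lets us invoke Ehrhart's Theorem via Hoffman–Kruskal. The genuinely new work in this section, extending to $\vec b k + \vec c$ with nonzero $\vec c$ (Theorem~\ref{ext.thm.intro}), will presumably come later.
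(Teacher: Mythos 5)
Your proof is correct and takes essentially the same route as the paper: apply Hoffman--Kruskal (Proposition~\ref{prop: Scrijver 1}) to see that $\mc P(A,\vec b)$ is a lattice polytope, note the dilation identity $\mc P(A, k\vec b) = k\,\mc P(A,\vec b)$, and invoke Ehrhart's Theorem~\ref{thm: Ehrhart}. You have merely spelled out the substitution $\vec y = \vec x/k$ which the paper leaves implicit.
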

\begin{proof}
By Proposition~\ref{prop: Scrijver 1}, $\mc P(A,\vec{b}k)$ is a lattice polytope. Moreover, $\mc P(A,\vec{b}k) = k \mc P(A, \vec{b})$. Therefore the conclusion follows by Ehrhart's Theorem. % 
\end{proof}

\begin{lemma}
\label{rowoperation totunimod lemma}
Let $A$ be an $m \times n$ totally unimodular matrix with $a_{11} \neq 0$.   Let $A'$ be the  matrix obtained by applying all the row operations 
%\begin{equation} \label{a.row.operation}
\beq
R_i \mapsto R_i- a_{i1}a_{11}^{-1}R_1,
\eeq
%\end{equation}
 for $2 \leq i \leq m$.   Then $A'$ is totally unimodular.
\end{lemma}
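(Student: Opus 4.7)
My plan is to argue directly that every square submatrix of $A'$ has determinant in $\{-1,0,1\}$. Since $A$ is totally unimodular, $a_{11}\in\{-1,+1\}$, so $a_{11}^{-1}=a_{11}$ and the row operations are integral. Fix any $k\times k$ submatrix $B'$ of $A'$ with row index set $I=\{i_1<\cdots<i_k\}$ and column index set $J=\{j_1<\cdots<j_k\}$. Let $B$ denote the submatrix of $A$ with the same row/column indices. The proof would split on whether $1\in I$.

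If $1\in I$, then the first row of $B'$ coincides with the first row of $B$, and for $s\ge 2$ the $s$th row of $B'$ equals the $s$th row of $B$ minus $a_{i_s1}a_{11}^{-1}$ times the first row of $B$ (because restriction to the columns $J$ commutes with these row operations). Thus $B'$ is obtained from $B$ by elementary row operations that do not alter the determinant, giving $\det B'=\det B\in\{-1,0,1\}$.

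The interesting case is $1\notin I$, and this is where the main idea lies. The row operations transform the first column of $A$ into $(a_{11},0,0,\ldots,0)^t$, so the first column of $A'$ is this vector. Consider the augmented $(k+1)\times(k+1)$ submatrix $\tilde B'$ of $A'$ on rows $\{1\}\cup I$ and columns $\{1\}\cup J$. Expanding $\det\tilde B'$ along its first column produces a single nonzero cofactor and yields $\det\tilde B'=a_{11}\det B'$. On the other hand, the argument of the previous paragraph (applied to the $(k+1)\times(k+1)$ submatrix, in which row $1$ is present) shows $\det\tilde B'=\det\tilde B$, where $\tilde B$ is the corresponding submatrix of $A$. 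Hence $\det B'=a_{11}^{-1}\det\tilde B=\pm\det\tilde B\in\{-1,0,1\}$ by total unimodularity of $A$.

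The only real obstacle is noticing the augmentation trick in the second case; once one observes that the row operations kill column $1$ below the pivot, the $(k+1)\times(k+1)$ cofactor expansion reduces the determinant of an arbitrary submatrix of $A'$ to the determinant of a submatrix of $A$, up to sign. Everything else is bookkeeping about restricting row operations to column subsets.
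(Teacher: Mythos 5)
Your approach is essentially the paper's: case split on whether row index $1$ lies in $I$, a direct row-operation argument when it does, and an augmentation-plus-cofactor-expansion trick when it does not. There is, however, a small gap in your treatment of the case $1\notin I$. You form the augmented submatrix of $A'$ on rows $\{1\}\cup I$ and columns $\{1\}\cup J$ and call it $(k+1)\times(k+1)$, but it is square only when $1\notin J$ as well. If $1\in J$ then $\{1\}\cup J=J$, the augmented matrix is $(k+1)\times k$, and the cofactor expansion you invoke is not defined. That subcase must be handled separately, and it is immediate from the very fact you already noted: the first column of $A'$ is $(a_{11},0,\ldots,0)^t$, and since $1\notin I$ every row of the submatrix $A'_{IJ}$ has a zero in the column indexed by $j=1$, so $\det A'_{IJ}=0$ outright. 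The paper splits exactly into these two subcases ($1\notin I,\ 1\in J$ gives a zero column; $1\notin I,\ 1\notin J$ gives the augmentation). Once you insert that one-line observation for $1\in J$, your proof coincides with the paper's.
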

This seems to be well-known, but we provide a proof for the convenience of the reader.
\begin{proof} 
 
  Let $I \subseteq \{1,2, \ldots, m\}$ and  $J \subseteq \{1,2, \ldots, n\}$, with $|I|=|J|>0$. Let $A_{IJ}$ denote the submatrix of $A$ that corresponds to the rows with index in $I$ and columns with index in $J$. We need to show that $\det(A'_{IJ})$ is $1, -1$ or $0$. 

Case 1: If $1 \in I$, $\det(A'_{IJ})= \det(A_{IJ}) = 1,-1$ or $0$, since $A$ is totally unimodular and the determinant remains unchanged under such a row operation.

Case 2: If $1 \notin I$, $1 \in J$, $\det(A'_{IJ}) = 0$ since the first column of $A'_{IJ}$ is 0.

Case 3: If $1 \notin I$, $1 \notin J$, let $\tilde{I} = I \cup \{1\}$ and $\tilde{J} = J \cup \{1\}$. Then $\det(A'_{\tilde{I}\tilde{J}}) = a_{11} \det(A'_{IJ})$. By Case 1, we see $\det(A'_{\tilde{I}\tilde{J}})$ is $1$, $-1$ or $0$.

Hence $A'$ is totally unimodular.   \end{proof}

\begin{lemma} \label{zero.row.lemma}
Let $A$ be an $m \times n$ matrix, and $\vec b,\vec c \in \R^m$. Put
\beq
Z=\{ 1 \leq i \leq m \mid \text{ the $i$th row of $A$ is $0$}\},
\eeq
and $m_*=m-|Z|$. Let $A_*$ be the $m_* \times n$ matrix obtained by deleting the zero rows from $A$.
Similarly form $\vec b_*,\vec {c_*} \in \R^{m_*}$  by deleting the corresponding components from $\vec b$ and $\vec c$.

Then  one of the following must hold:
\begin{enumerate}
\item \label{one.zero.lem} $\mc P(A,\vec bk+\vec c)=\emptyset$ for $k \gg 0$.
\item \label{two.zero.lem} $\mc P(A,\vec bk+\vec c)=\mc P(A_*,\vec b_*k+\vec {c_*})$ for $k \gg 0$.
\end{enumerate} 
 \end{lemma}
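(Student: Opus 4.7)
The plan is to look at exactly what constraint each zero row of $A$ contributes and then do a case analysis on the sign of $b_i$ and $c_i$ for $i \in Z$.

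For any $i \in Z$, the inequality coming from the $i$th row of $A \vec x \leq \vec b k + \vec c$ reads
\beq
0 \leq b_i k + c_i,
\eeq
a condition on $k$ alone (independent of $\vec x$). First I would split into cases for each $i \in Z$: if $b_i > 0$, the inequality holds for all $k \gg 0$; if $b_i < 0$, it fails for all $k \gg 0$; if $b_i = 0$, the inequality becomes $0 \leq c_i$, which is either always true or always false depending on the sign of $c_i$.

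Next, I would combine these cases. If some $i \in Z$ has either $b_i < 0$, or $b_i = 0$ together with $c_i < 0$, then for $k \gg 0$ the $i$th inequality is violated regardless of $\vec x$, giving conclusion \eqref{one.zero.lem}. Otherwise, for every $i \in Z$ the $i$th inequality is automatically satisfied for all $k \gg 0$, so removing these rows leaves the feasible set unchanged; since $A_*$ and $(\vec b_*, \vec c_*)$ are obtained from $A$ and $(\vec b, \vec c)$ precisely by deleting these now-redundant rows, the remaining inequalities $A_* \vec x \leq \vec b_* k + \vec c_*$ cut out the same set as $A \vec x \leq \vec b k + \vec c$ and the nonnegativity condition $\vec x \geq 0$ is unchanged. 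This gives \eqref{two.zero.lem}.

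There is no real obstacle here; the only point to be careful about is the boundary case $b_i = 0$, where the bound on $k$ beyond which the $i$th constraint is (un)satisfied must be checked from $c_i$ rather than from $b_i$, so that the threshold ``$k \gg 0$'' is chosen as the maximum over all $i \in Z$ of the relevant thresholds.
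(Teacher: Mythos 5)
Your proof is correct and follows essentially the same case analysis as the paper's: split on the sign of $b_i$, then on the sign of $c_i$ when $b_i=0$, for each zero row $i \in Z$. The only cosmetic difference is that the paper handles the $b_i<0$ sub-case by invoking Lemma~\ref{lem: P(A,b)empty}, whereas you argue directly that $b_i k + c_i < 0$ for $k \gg 0$; both are fine.
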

 
 \begin{proof} 
 If there exists $i \in Z$ with $b_i<0$, then $\mc P(A,\vec b)=\emptyset$, so \eqref{one.zero.lem} holds by Lemma \ref{lem: P(A,b)empty}.
 So we may assume that $b_i \geq 0$ for all $i \in Z$. If there exists $i \in Z$ so that both $c_i <0$ and $b_i=0$, then $\mc P(A,\vec bk+\vec c)=\emptyset$ for all $k \geq 0$.
 Otherwise, the zero rows of $A$ correspond to inequalities $0 \leq b_ik+c_i$ with either $b_i>0$, or $b_i=0$ and $c_i \geq 0$. For large $k$, these inequalities will hold, giving  \eqref{two.zero.lem}.
 \end{proof}
 
 %Also note that $b_i<0$ for some $i \in Z$, then $P(A,\vec b)=\emptyset$; otherwise, $P(A,\vec b)=P(A_*,\vec b_*)$. In particular, if $P(A,\vec b)$ is nonempty and bounded, then also $P(A_*,\vec b_*)$ is nonempty and bounded. 

\begin{thm}
\label{totunimod polytope polynomial}
Let $A$ be an $m \times n$ totally unimodular matrix and $\vec b, \vec c \in \mb Z^m$. Suppose $\mc P(A, \vec b)$ is  bounded. Then there is a polynomial $f(k)$ with $\deg f \leq n$, such that for integers $k \gg 0$, we have 
\beq
N(A, \vec bk+\vec c)=f(k).
\eeq
 If $\dim \mc P(A,\vec b) =n$, and none of the rows of $A$ are 0, then $\deg f = n$ and the leading coefficient of $f$ is the  volume of $\mc P(A,\vec b)$.
\end{thm}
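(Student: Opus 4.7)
The plan is to extend Proposition~\ref{Ehrhart for totunimod} from the pure-dilation case $\vec c=0$ to the general affine family $\vec bk+\vec c$ via reduction to a clean situation followed by an induction on $n$.

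I would first carry out reductions using the preparatory lemmas. By Lemma~\ref{zero.row.lemma}, either $\mc P(A,\vec bk+\vec c)=\emptyset$ for $k\gg 0$ (in which case $f=0$ works), or the zero rows of $A$ may be deleted without affecting the family; so I assume $A$ has no zero rows. By Lemma~\ref{lem: P(A,b)empty}, if $\mc P(A,\vec b)=\emptyset$ then $\mc P(A,\vec bk+\vec c)=\emptyset$ for $k\gg 0$ and $f=0$ suffices. Otherwise Lemma~\ref{lem: P(A,b) bounded} gives boundedness of $\mc P(A,\vec bk+\vec c)$ whenever nonempty, and Proposition~\ref{prop: Scrijver 1} shows each such polytope is a lattice polytope. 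For $k\gg 0$ the combinatorial type of $\mc P(A,\vec bk+\vec c)$ stabilizes: each vertex comes from a basis $B$ of $n$ tight constraints (drawn from the inequalities $A\vec x\leq\vec bk+\vec c$ and $\vec x\geq 0$) with invertible square submatrix, and equals $\vec u_B k+\vec w_B$ with $\vec u_B,\vec w_B\in\Z^n$ by Cramer's rule and total unimodularity; whether this candidate vertex is feasible is decided by signs of finitely many linear functions of $k$, which stabilize for large $k$.

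For the polynomial count itself I would induct on $n$. The base case $n=1$ reduces to counting integers in an interval whose endpoints are eventually integers linear in $k$. For the inductive step, slice $\mc P(A,\vec bk+\vec c)$ by the last coordinate $x_n=j$: the slice is governed by the totally unimodular $m\times(n-1)$ matrix $A'$ obtained by deleting the last column of $A$, with right-hand side $\vec bk+\vec c-jA_{\cdot,n}$ linear in both $k$ and $j$. A two-parameter strengthening of the theorem, proved by the same induction, gives that each slice count is polynomial in $(k,j)$; summing over $j$ in an interval with linear-in-$k$ endpoints preserves polynomiality. For the degree and leading coefficient, $\tfrac{1}{k}\mc P(A,\vec bk+\vec c)$ converges to $\mc P(A,\vec b)$ as $k\to\infty$, so their dimensions agree for $k$ large, giving $\deg f\leq n$ in general and $\deg f=n$ under the stated hypotheses. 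The Ehrhart-style asymptotic $N(A,\vec bk+\vec c)\sim k^n\vol(\mc P(A,\vec b))$ then identifies the leading coefficient as $\vol(\mc P(A,\vec b))$.

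The main obstacle is the inductive polynomial count, which requires formulating and proving a strengthened two-parameter version of the theorem and controlling the joint stabilization of the combinatorial type in $(k,j)$, since slicing couples the two parameters through the shift $jA_{\cdot,n}$. Lemma~\ref{rowoperation totunimod lemma} may be invoked to simplify $A$ via row operations while preserving total unimodularity if the induction requires pivoting on a specific column. An alternative route is to apply Brion's formula to the stabilized combinatorial type, yielding an explicit rational-function expression whose specialization gives a polynomial, but this is less elementary than the direct induction and would require separate justification that the resulting residue calculation produces a true polynomial rather than merely a quasipolynomial.
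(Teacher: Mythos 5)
The overall architecture — reduce via Lemmas~\ref{lem: P(A,b)empty}, \ref{lem: P(A,b) bounded}, \ref{zero.row.lemma} to the nonempty, no-zero-rows case, then induct on $n$ with an explicit base case $n=1$, using Proposition~\ref{Ehrhart for totunimod} to anchor the leading term — matches the paper. But your inductive step has a genuine gap that the paper's decomposition is specifically designed to avoid.

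You slice by the value of a coordinate $x_n=j$, and $j$ ranges over an interval whose length grows linearly in $k$. Each slice count is then $N(A',\vec bk+(\vec c - jA_{\cdot,n}))$, where the constant offset $\vec c - jA_{\cdot,n}$ varies with the summation index $j$. The theorem, even after induction, only promises that this is a polynomial in $k$ for $k$ beyond a threshold that depends on the offset, i.e.\ on $j$; and since $j$ ranges up to $O(k)$, there is no uniform threshold. You name this explicitly as ``the main obstacle'' and defer it to ``a two-parameter strengthening of the theorem,'' but that strengthening is exactly the hard part — it is not formulated, not proved, and not obviously provable by ``the same induction,'' because the coupling between $k$ and $j$ through $jA_{\cdot,n}$ is precisely what the single-parameter statement cannot handle. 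As written, your induction does not close.

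The paper sidesteps this by decomposing along the \emph{constant} part $\vec c$ rather than along a coordinate. Fixing a row with $c_1\ne 0$, it partitions the lattice points of $\mc P(A,\vec bk+\vec c)$ into those satisfying the first constraint with right-hand side $b_1k$ (i.e.\ $\vec c$ reduced to $\vec{c'}=(0,c_2,\ldots)^t$), plus the hyperplane slices where the first constraint is tight at $b_1k+\ell$ for $\ell=1,\ldots,c_1$. The crucial point is that this produces only $|c_1|+1$ pieces, a number independent of $k$. The first piece has strictly fewer nonzero components of $\vec c$ and is handled by an inner induction on that count, bottoming out at $\vec c=\vec 0$ (Proposition~\ref{Ehrhart for totunimod}), which also supplies the degree-$n$ leading term. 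Each hyperplane slice, after solving for $x_1$ and applying Lemma~\ref{rowoperation totunimod lemma} to preserve total unimodularity, becomes an $(n-1)$-dimensional instance with fixed (not $k$- or $\ell$-varying) offset, handled by the outer induction on $n$ and contributing degree at most $n-1$. This double induction (on $n$ and on the number of nonzero entries of $\vec c$) is the mechanism that replaces your missing two-parameter lemma. If you want to salvage your coordinate-slicing route, you would need to state and prove that two-parameter polynomiality result independently, which is a nontrivial extra project; alternatively, your sketch of the asymptotic $N(A,\vec bk+\vec c)\sim k^n\vol(\mc P(A,\vec b))$ via convergence of $\tfrac1k\mc P(A,\vec bk+\vec c)$ to $\mc P(A,\vec b)$ also needs justification beyond what you give, since convergence of polytopes does not by itself yield the leading coefficient of the count without uniform lattice-point estimates.
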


\begin{proof}
By Lemma \ref{lem: P(A,b)empty}, we may assume $\mc P(A,\vec b) \neq \emptyset$.

Suppose first that $A$ has at least one zero row. By Lemma  \ref{zero.row.lemma}, either $N(A, \vec bk+\vec c)=0$ for $k \gg 0$, or $\mc P(A,\vec bk+\vec c)=\mc P(A_*,\vec b_*k+\vec {c_*})$ for $k \gg 0$. Since  $\mc P(A,\vec b)=\mc P(A_*,\vec b_*)$, we may assume  that none of the rows of $A$ are $0$ for the first statement of the theorem.

 Let  $\vec b = (b_1, b_2, \ldots, b_m)^t$ and $\vec c = (c_1, c_2, \ldots, c_m)^t$. 

We proceed by induction on $n$. 
For $n=1$, the matrix $A=(a_1, \ldots, a_m)^t$ is a single column, with each $a_i=\pm 1$. 
Moreover
\beq
\mc P(A,\vec b) =\{ x \in \R \mid a_i x \leq b_i \: \forall i, x \geq 0 \}.
\eeq
Put $I=\{ i \mid a_i=1\}$ and   $J= \{0\} \cup \{ j \mid  a_{j} = -1\}$. Note that
\beq
\begin{split}
\mc P(A,\vec b) \text{ is bounded} & \Leftrightarrow I \neq \emptyset, \: \: \:  \text{ and } \\
 \mc P(A, \vec b) \neq \emptyset & \Leftrightarrow -b_j \leq b_i \: \forall i \in I, j \in J. \\
 \end{split}
\eeq
Let $B^-= \max \{-b_j \mid j \in J\}$, and $B^+= \min\{b_i \mid i \in I\}$.  Since $\mc P(A,\vec b)$ is bounded and nonempty, we have $0 \leq B^- \leq B^+$, and may write
\beq
\mc P(A,\vec b)=[B^-,B^+].
\eeq
Therefore 
\beq
\dim \mc P(A,\vec b)=1 \Leftrightarrow B^- < B^+.
\eeq
Now let $C^-= \max\{-c_j \mid j \in J, \: -b_j= B^-\}$ and
$C^+ = \min\{c_i \mid i \in I, \: b_i=B^+\}$. If $\dim \mc P(A,b)=1$, then for $k \gg 0$, we have
\beq
\mc P(A,\vec bk + \vec c)=[kB^-+C^-,kB^++C^+],
\eeq
so that
\beq
N(A, \vec bk+\vec c) = (B^+ - B^-)k + (C^+ - C^-) + 1.
\eeq
On the other hand, if $\dim \mc P(A,\vec b) =0$, it is easy to see that
\beq
N(A, \vec bk+\vec c) = \begin{cases}
C^+ - C^- +1 &\text{ if } C^+ >C^- \\
0 &\text{ if }  C^+ \leq C^- ,
\end{cases}
\eeq
for $k \gg 0$.
From these calculations we deduce  the theorem when $n=1$.
 
For $n>1$, we further induct on the number of nonzero components of $\vec c$.  Let $A=(a_{ij})$. If $\vec c=\vec 0$, then the conclusion follows from Proposition ~\ref{Ehrhart for totunimod}.  
So suppose $\vec c \neq \vec 0$. By permuting the rows, we may assume $c_1 \neq 0$. 

First, we consider the case where $c_1 > 0$.  We partition  the integer points of $\mc P(A,\vec bk+\vec c)$ as follows. Consider the systems of inequalities:
\begin{equation}
\label{eq: 1}
\begin{array}{ccccccccc}
a_{11}x_1 & + & a_{12}x_2 & + & \ldots & + & a_{1n}x_n & \leq & b_1k\\
a_{21}x_1 & + & a_{22}x_2 & + & \ldots & + & a_{2n}x_n & \leq & b_2k + c_2\\
\vdots &  & \vdots &  & \ldots &  & \vdots &  & \vdots\\
a_{m1}x_1 & + & a_{m2}x_2 & + & \ldots & + & a_{mn}x_n & \leq & b_mk + c_m\\
\end{array}
\end{equation}
and 
\begin{equation}
\label{eq: 2}
\begin{array}{ccccccccc}
a_{11}x_1 & + & a_{12}x_2 & + & \ldots & + & a_{1n}x_n & = & b_1k + \ell\\
a_{21}x_1 & + & a_{22}x_2 & + & \ldots & + & a_{2n}x_n & \leq & b_2k + c_2\\
\vdots &  & \vdots &  & \ldots &  & \vdots &  & \vdots\\
a_{m1}x_1 & + & a_{m2}x_2 & + & \ldots & + & a_{mn}x_n & \leq & b_mk + c_m\\
\end{array}
\end{equation}
for $\ell = 1,2, \ldots, c_1$.  Write $\mc P^0(k)$ for the nonnegative solutions to \eqref{eq: 1} and $\mc P^\ell(k)$ for the nonnegative solutions to \eqref{eq: 2} for $1 \leq \ell \leq c_1$.  Then we have a disjoint union
\begin{equation} \label{peterd}
\mc  P(A,\vec bk+\vec c) \cap \Z^n= \coprod_{\ell=0}^{c_1} (\mc P^\ell(k) \cap \Z^n).
\end{equation}

 The first row of $A$ is nonzero, and we  assume for simplicity that $a_{11} \neq 0$. Solving the equality in \eqref{eq: 2} for $x_1$ gives:
$$x_1 = a_{11}^{-1}\left(b_1k + \ell - \sum\limits_{j \neq 1}a_{1j}x_j \right).$$
Substituting this into the rest  of \eqref{eq: 2} gives:
%\begin{equation}
%\label{eq: 2'}

\beq
\begin{array}{cccccc}
 
a_{21}a_{11}^{-1} \left(b_1k + \ell - \sum\limits_{j\neq 1}a_{1j}x_j \right) & + & \ldots & + & a_{2n}x_n &\leq b_2k + c_2 \\
\vdots &  & \vdots &  & \vdots & \vdots \\
a_{m1}a_{11}^{-1} \left(b_1k + \ell - \sum\limits_{j\neq 1}a_{1j}x_j \right) & + & \ldots & + & a_{mn}x_n & \leq b_mk + c_m. \\
\end{array}
\eeq
%\end{equation}
for $\ell = 1,2 \ldots, c_1$.
To these inequalities we add 
\beq
 \sum\limits_{j \neq 1} a_{11}^{-1} a_{1j}x_j \leq a_{11}^{-1}(b_1k+ \ell),
\eeq
corresponding to the condition $x_1 \geq 0$.

Write $A'$ for the $m \times (n-1)$ matrix obtained by first applying all the row operations $R_i \mapsto R_i -a_{i1}a_{11}^{-1}R_1$ to $A$ for $2 \leq i \leq m$, removing the first column, and multiplying the first row by $a_{11}^{-1}$.
Then $A'$ is totally unimodular by Lemma \ref{rowoperation totunimod lemma}.  
Define $\vec b' \in \Z^{m}$ by
\beq
\vec b' =(a_{11}^{-1} b_1, b_2-a_{21}a_{11}^{-1} b_1, \cdots, b_m-a_{m1}a_{11}^{-1} b_1)^t,
\eeq
and $\vec c_{(\ell)} \in \Z^{m}$ by
\beq
\vec {c}_{(\ell)}=(a_{11}^{-1} \ell, c_2-a_{21} a_{11}^{-1} \ell, \ldots, c_m-a_{m1} a_{11}^{-1} \ell)^t.
\eeq
Eliminating the first component gives a projection from $\R^n$ to $\R^{n-1}$. This projection maps $\mc P^\ell(k)$  bijectively to $\mc P(A',\vec b'k+ \vec c_{(\ell)})$ and also gives a bijection on integer points. 

Now $\mc P(A, \vec bk + \vec c)$ is bounded by Lemma \ref{lem: P(A,b) bounded}. Therefore the closed subset $\mc P^\ell(k)$ is compact, and it follows that its image $\mc P(A',\vec b' k+ \vec c_{(\ell)})$ is also bounded.
%The polytope $\mc P^\ell(k)$ lies inside $\mc P(A, \vec bk + \vec c)$, hence is bounded (by Lemma \ref{lem: P(A,b) bounded}). Thus $\mc P(A',\vec b' k+ \vec c_{(\ell)})$ is also bounded.
 By \eqref{peterd} we have
\begin{equation} \label{n.break.up}
N(A,\vec bk+\vec c)=N(A,\vec bk+\vec {c'}) + \sum_{\ell=1}^{c_1} N(A',\vec b'k+ \vec c_{(\ell)}),
\end{equation}
where  $\vec {c'}=(0,c_2,c_3,\ldots)^t$. 

By our induction on nonzero components of $\vec c$, we know that for $k \gg 0$, $N(A,\vec bk+\vec {c'})=  g(k)$, where $g$ is a polynomial with $\deg g \leq n$.
Moreover, if  $\dim \mc P(A,\vec b)=n$, then $\deg g=n$, and its leading coefficient is the  volume of $\mc P(A,\vec b)$.

For a given $1 \leq \ell \leq c_1$, either $\mc P(A',\vec b'k+ \vec c_{(\ell)}) = \emptyset$ for $k \gg 0$, or 
\beq
\mc P(A',\vec b'k+ \vec c_{(\ell)})=\mc P(A'_*,\vec b'_*k+ \vec c_{(\ell),*})
\eeq
by Lemma  \ref{zero.row.lemma}. In the first case, put $f_{(\ell)}=0$. In the second case,  $A'_*$ has no zero rows. Therefore by our induction on $n$,  there are polynomials $f_{(\ell)}$, with $\deg  f_{(\ell)} \leq n-1$, so that for $k \gg 0$ we have $N(A',\vec b'k+\vec c_{(\ell)})=f_{(\ell)}$.   
Now
\beq
f=g+\sum_{\ell=1}^{c_1} f_{(\ell)}
\eeq
satisfies the conclusion of the theorem.

For the case where $c_1<0$, consider \eqref{eq: 1} and \eqref{eq: 2}, but with  $\ell = c_1+1, \ldots, 0$. The elimination procedure runs as before, but replacing \eqref{n.break.up} with
\beq
N(A,\vec bk+\vec c) +  \sum_{\ell=c_1+1}^{0} N(A',\vec b' k+ \vec c_{(\ell)})
           =N(A,\vec bk+\vec {c'}),
           \eeq
and one takes
\beq
f=g-\sum_{\ell=c_1+1}^{0} f_{(\ell)}.
\eeq
(This time $\mc P^\ell(k)$ is bounded because it is contained in $\mc P(A,\vec bk+\vec{c'})$; thus its projection $\mc P(A',\vec b' k+\vec c_{(\ell)})$ is bounded.)

This completes the induction, and the theorem is proved. \end{proof}

\begin{ex}
Let $A=\begin{pmatrix} 0 \\ 1 \\ \end{pmatrix}$, $\vec b=\begin{pmatrix} 0 \\ 1 \\ \end{pmatrix}$, and  $\vec c=\begin{pmatrix} -1 \\ 0 \\ \end{pmatrix}$. Then $\mc P(A,\vec b)$ is the interval $[0,1]$, but $\mc P(A,\vec bk+\vec c)=\emptyset$. So we cannot remove the hypothesis in Theorem \ref{totunimod polytope polynomial} that none of the rows of $A$ are $0$.
\end{ex}

\begin{ex} 
Let $A= \begin{pmatrix} 1 &0 \\ -1 & 0\\  0 & 1 \\ 0 & -1 \\   \end{pmatrix}$, $\vec b= \begin{pmatrix} 1 \\ -1 \\ 1 \\ 0 \end{pmatrix}$, and $\vec c= \begin{pmatrix} 0 \\ -1 \\  0 \\ 0 \end{pmatrix}$. Then $\mc P(A,\vec b)=\{ 1\} \times [0,1]$, but $\mc P(A,\vec b k+ \vec c)=\emptyset$ for all $k$. So we cannot remove the hypothesis in Theorem \ref{totunimod polytope polynomial} that  $\dim(\mc P(A,\vec b)) = n$.
\end{ex}

Write $M_{\vec{r}, \vec{c}}$ for the number of nonnegative integer points in the transportation polytope $\mc M(\vec r, \vec c)$ and $V_{\vec r, \vec c}$ for its relative volume.

\begin{lemma}
\label{lemma: Mrkck polynomial} For $1 \leq i \leq s$ let $r_i,a_i \in \Z$, and for $1 \leq j \leq t$ let $b_j,c_j \in \Z$, with
 $\sum r_i = \sum c_j$ and $\sum a_i =\sum b_j$. Put $\vec{r}_k = (r_1k + a_1,  \ldots, r_sk + a_s)$ and $\vec{c}_k = (c_1k + b_1, \ldots, c_tk + b_t)$. Then for $k \gg 0$, the function $k \mapsto M_{\vec{r}_k, \vec{c}_k}$ is a polynomial in $k$ of degree equal to $(s-1)(t-1)$ and leading coefficient $V_{\vec{r}, \vec{c}}$.

\end{lemma}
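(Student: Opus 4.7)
The plan is to recast the counting problem as counting integer points in a polytope of the form $\mc P(A, \vec b k + \vec c)$ and then invoke Theorem~\ref{totunimod polytope polynomial}. By Proposition~\ref{transport.A.b.form}, the projection $\pi$ gives a bijection on integer points between $\mc M(\vec r_k, \vec c_k)$ and $\mc M'(\vec r_k, \vec c_k)$, and moreover $\mc M'(\vec r_k, \vec c_k) = \mc P(A, \vec b_k)$, where $A$ is the same $(s+t-1) \times (s-1)(t-1)$ totally unimodular coefficient matrix for every $k$ — it depends only on $s,t$ — while all dependence on the margins is packaged into $\vec b_k$.

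Reading off the explicit formula for $\vec b$ in the proof of Proposition~\ref{transport.A.b.form}, I would observe that $\vec b_k$ is affine-linear in $k$: writing
\[
\vec B = (r_1,\ldots,r_{s-1},c_1,\ldots,c_{t-1},\, r_s - \textstyle\sum_{j=1}^{t-1}c_j)^t,
\]
\[
\vec C = (a_1,\ldots,a_{s-1},b_1,\ldots,b_{t-1},\, a_s - \textstyle\sum_{j=1}^{t-1}b_j)^t,
\]
one has $\vec b_k = \vec B k + \vec C$ (using $\sum r_i = \sum c_j$ and $\sum a_i = \sum b_j$ to ensure the last coordinate is consistent). Hence
\[
M_{\vec r_k, \vec c_k} = N(A, \vec B k + \vec C),
\]
and moreover $\mc P(A, \vec B) = \mc M'(\vec r, \vec c)$.

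To apply Theorem~\ref{totunimod polytope polynomial} I need to verify: $A$ is totally unimodular (given by Proposition~\ref{transport.A.b.form}); $\mc P(A,\vec B)$ is bounded (also from Proposition~\ref{transport.A.b.form}, as the pair $(A,\vec B)$ is of bounded type); $\dim \mc P(A,\vec B) = (s-1)(t-1)$, the ambient dimension, which again is part of Proposition~\ref{transport.A.b.form}; and finally that no row of $A$ is zero. The last point follows by direct inspection of the system of inequalities exhibited in the proof of Proposition~\ref{transport.A.b.form}: each of the first $s-1$ rows contains the row-sum constraint for some row of $X$, each of the next $t-1$ rows contains a column-sum constraint, and the last row has entries $-1$ throughout — none are identically zero.

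Granting these four points, Theorem~\ref{totunimod polytope polynomial} produces a polynomial $f(k)$ of degree $(s-1)(t-1)$ with $M_{\vec r_k,\vec c_k} = f(k)$ for $k \gg 0$, whose leading coefficient is the volume of $\mc P(A,\vec B) = \mc M'(\vec r,\vec c)$. By Proposition~\ref{transport.A.b.form} this volume equals the relative volume of $\mc M(\vec r, \vec c)$, which is $V_{\vec r, \vec c}$ by definition. I do not anticipate any real obstacle here; the entire argument is bookkeeping to fit the hypotheses of Theorem~\ref{totunimod polytope polynomial}, with the mildest step being the no-zero-row check, which is immediate from the explicit form of the constraints.
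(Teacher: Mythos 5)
Your proof is correct and follows essentially the same route as the paper's: project via $\pi$ to $\mc M'(\vec r_k,\vec c_k)=\mc P(A,\vec B k+\vec C)$ and invoke Theorem~\ref{totunimod polytope polynomial}. You spell out the explicit formulas for $\vec B$ and $\vec C$ and verify the no-zero-row condition by inspecting the constraints, which the paper simply asserts, but this is the same argument with the bookkeeping made explicit.
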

\begin{proof}
The nonnegative integer points in $\mc M(\vec{r}_k$, $\vec{c}_k)$ are in bijection with the nonnegative integer points in its projection  $\mc M'(\vec{r}_k, \vec{c}_k)$. By Proposition \ref{transport.A.b.form}, we may write
\beq
\mc M'(\vec{r}_k, \vec{c}_k)=\mc P(A,\vec b k+\vec c),
\eeq
for an $(s+t-1) \times (s-1)(t-1)$ matrix $A$, and $\vec b,\vec c \in \R^{st}$. Moreover, $A$ is totally unimodular with no zero rows, and $\mc P(A,\vec b)$ is bounded, of dimension $(s-1)(t-1)$. 
 Hence the result follows by Theorem \ref{totunimod polytope polynomial}.
\end{proof}

\section{Proofs of the Main Theorems}\label{sec: proofs of thm 1 and 3}

All of the above was aimed towards proving Theorems  \ref{main.thm.intro} and \ref{not.prime.intro}, which we complete in this section. 

Recall that for integral vectors $\vec r$ and $\vec c$, write $M_{\vec r, \vec c}$ for the number of nonnegative integral matrices with row margins $\vec r$ and column margins $\vec c$. Let us write $V_{s,t}$ for the relative volume of the transportation polytope for row margins $(\underbrace{s, \ldots, s}_{t \text{ times}})$ and column margins $(\underbrace{t, \ldots, t}_{s \text{ times}})$. 
For instance, by Example \ref{Ex.V23}, $V_{2,3}=3$.

We recall Theorem \ref{main.thm.intro} from the Introduction.
\bigskip

\noindent {\bf Theorem 1.}  Let $s, t$ be relatively prime.  There is a quasipolynomial $Q_{\sigma,\tau}(k)$ of degree $(s-1)(t-1)$ and period $st$, so that for integers
 $k \gg 0$, we have $\Nstk=Q_{\sigma,\tau}(k)$.  The leading coefficient of $Q_{\sigma,\tau}(k)$ is $V_{s,t}$.
 
\begin{proof} 
 Put
 \beq
 (r_1, \ldots, r_s)=kt(1,1, \ldots,1)
 \eeq
 and
 \beq
 (c_1, \ldots, c_t)=ks(1,1,\ldots,1).
 \eeq
 By Theorem  \ref{rel.prime.cor.today}, there are integers $a_1,\ldots, a_s$ and $b_1, \ldots, b_t$ so that if
  $\vec{r}_k = (r_1k + a_1,  \ldots, r_sk + a_s)$ and $\vec{c}_k = (c_1k + b_1, \ldots, c_tk + b_t)$, then
 \beq
N_{\sigma,\tau}(i+stk)= M_{\vec r_k,\vec c_k}.
\eeq
for $i \gg 0$. The conclusion then follows from Lemma   \ref{lemma: Mrkck polynomial}.
\end{proof}

\begin{ex}
Let $s =2, t=3$, $\sigma = \tau = \emptyset$.  Then
$$N_{\emptyset, \emptyset}(6k) = M_{\vec r_k, \vec c_k}$$ where $\vec r_k = (3k,3k)$ and $\vec c_k = (2k,2k,2k)$. The projection $\mc M'(\vec r_k,\vec c_k)$ is illustrated in Figure 2.

\begin{figure}[!h]\label{Figure 2}
\begin{tikzpicture}[scale=2]
\draw (0,1) -- (1,0) -- (2,0) -- (2,1) -- (1,2) -- (0,2) -- (0,1);
\node[label=left:{$(0,k)$}] at (0,1) {};
\node[label=left:{$(k,0)$}] at (1,0) {};
\node[label=right:{$(2k,0)$}] at (2,0) {};
\node[label=right:{$(2k,k)$}] at (2,1) {};
\node[label=right:{$(k,2k)$}] at (1,2) {};
\node[label=left:{$(0,2k)$}] at (0,2) {};
\end{tikzpicture}
\caption{The transportation polytope for $i=0$}
\end{figure}

In fact, $N_{\emptyset, \emptyset}(6k) = 3k^2+3k+1$. For $i =1$, $$N_{\emptyset, \emptyset}(1+6k) = M_{\vec r_k, \vec c_k}$$ where $\vec r_k = (3k+1,3k)$ and $\vec c_k = (2k+1,2k,2k)$. The projection $\mc M'(\vec r_k, \vec c_k)$ is given in Figure 3.

\begin{figure}\label{Figure 3}
\begin{tikzpicture}[scale=2]
\draw (3,0) -- (5,0) -- (5,2) -- (3,4) -- (0,4) -- (0,3) -- (3,0);
\node[label=left:{$(k+1,0)$}] at (3,0) {};
\node[label=right:{$(2k+1,0)$}] at (5,0) {};
\node[label=right:{$(2k+1,k)$}] at (5,2) {};
\node[label=right:{$(k+1,2k)$}] at (3,4) {};
\node[label=left:{$(0,2k)$}] at (0,4) {};
\node[label=left:{$(0,k+1)$}] at (0,3) {};
\end{tikzpicture}
\caption{The transportation polytope for $i=1$}
\end{figure}

From this, one computes 
\beq
N_{\emptyset, \emptyset}(n)=\left\{
\begin{array}{@{}ll@{}}
3k^2 + 3k + 1, & \text{if}\ n=6k \\
3k^2 + 4k + 1, & \text{if}\ n=6k + 1 \\
3k^2 + 5k + 2, & \text{if}\ n=6k +2 \\
3k^2 + 6k + 3, & \text{if}\ n=6k +3 \\
3k^2 + 7k + 4, & \text{if}\ n=6k +4 \\
3k^2 + 8k + 5, & \text{if}\ n=6k +5.
\end{array}\right.
\eeq 
\end{ex}

We recall Theorem \ref{not.prime.intro} from the Introduction.
\bigskip

\noindent {\bf Theorem 3.}
If $\core_d(\sigma) = \core_d(\tau)$, then there is a quasipolynomial $Q_{\sigma, \tau}(k)$ of degree $\frac{1}{d} (s-d)(t-d)$ and period $m$, so that for integers $k \gg 0$, we have $\Nstk=Q_{\sigma,\tau}(k)$. The leading coefficient of $Q_{\sigma, \tau}(k)$ is $\left(V_{\frac{s}{d},\frac{t}{d}}\right)^d$.

\begin{proof}
Let $i \geq \ell_0$. We must show that for $k \gg 0$, the map $k \mapsto N_{\sigma,\tau}(i+mk)$ is polynomial of the given degree and leading coefficient.
By Theorem \ref{prop: Nst = prodNsigmajtauj}, 
%\begin{equation} \label{almost.done}
\beq
N_{\sigma,\tau}(i+mk)= \prod_{j=0}^{d-1} N_{\sigma_{j}^i,\tau_{j}^i} \left(\ell_{j}^i + \frac{m}{d}k \right),
\eeq
%\end{equation}
where each $\sigma_j^i$ is an $\frac{s}{d}$-core, each $\tau_j^i$ is a $\frac{t}{d}$-core, and $\ell_j^i$ are certain nonnegative integers. Recall that $\frac{st}{d^2}=\frac{m}{d}$. By Theorem \ref{main.thm.intro}, for each $i,j$, and with $k \gg 0$, 
the map $k \mapsto N_{\sigma_{j}^i,\tau_{j}^i} \left(\ell_{j}^i + \frac{m}{d}k \right)$ is a polynomial of degree 
\beq
 \left( \frac{s}{d}-1 \right) \left( \frac{t}{d}-1 \right),
 \eeq
 and  leading coefficient $V_{\frac{s}{d},\frac{t}{d}}$. 
 The theorem follows. 
\end{proof}

\begin{ex}\label{ex2:noncoprime case}
From Example \ref{example1: noncoprime} we have for $\sigma = (3,1,1), \tau = (3,2)$, $$N_{\sigma, \tau}(12 + 12k) = N_{(1),(1,1)}(7+6k) \cdot N_{(1),\emptyset}(5+6k).$$
Following the proof of Theorem~\ref{totunimod polytope polynomial} gives
\beq
N_{(1),(1,1)}(7+6k)= 3k^2 +10k+7 
\eeq
and
\beq
N_{(1),\emptyset}(5+6k) = 3k^2 +8k+5.
\eeq
 Thus  
 \beq
 N_{\sigma, \tau}(12 + 12k) = (3k^2 +10k+7 )(3k^2 +8k+5).
 \eeq
 
 %9k^4 + 54k^3+116k^2+106k +35.
\end{ex}
 
 Finally, we consider the number of $\la$ with length \emph{exactly} $k$, and having $s$-core $\sigma$ and $t$-core $\tau$. This is  given by
 \beq
 N'_{\sigma, \tau}(k) = \Nstk - N_{\sigma, \tau}(k-1).
\eeq

From  Theorem \ref{not.prime.intro}  we deduce:

\begin{cor}
There is a quasipolynomial $Q'_{\sigma, \tau}(k)$ of degree less than $\frac{1}{d} (s-d)(t-d)$ and period $m$, so that for integers $k \gg 0$, we have $N'_{\sigma,\tau}(k)=Q'_{\sigma,\tau}(k)$. 
\end{cor}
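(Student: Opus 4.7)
The plan is to deduce the corollary directly from Theorem \ref{not.prime.intro} by observing that termwise differences of a quasipolynomial (with respect to its period) drop the degree by at least one.

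First I would unpack the quasipolynomial structure. By Theorem \ref{not.prime.intro}, for $k \gg 0$ we have $N_{\sigma, \tau}(k) = Q_{\sigma, \tau}(k)$, where $Q_{\sigma, \tau}$ has period $m$ and degree $D := \tfrac{1}{d}(s-d)(t-d)$. Concretely, for each residue $0 \le i < m$ there is a polynomial $R_i(k)$ of degree $D$ in $k$ such that $Q_{\sigma, \tau}(k) = R_i(k)$ whenever $k \equiv i \pmod m$. Moreover, the leading coefficient of each $R_i$ is the same constant, namely $\left( V_{s/d,\,t/d} \right)^{d}$.

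Next I would compute $N'_{\sigma, \tau}(k)$ on each residue class. For $k \gg 0$,
\beq
N'_{\sigma, \tau}(k) = Q_{\sigma,\tau}(k) - Q_{\sigma,\tau}(k-1).
\eeq
If $k \equiv i \pmod m$ with $1 \le i < m$, then $k-1 \equiv i-1 \pmod m$, so on this class
\beq
N'_{\sigma, \tau}(k) = R_i(k) - R_{i-1}(k-1).
\eeq
If $k \equiv 0 \pmod m$, then $k-1 \equiv m-1 \pmod m$, so
\beq
N'_{\sigma, \tau}(k) = R_0(k) - R_{m-1}(k-1).
\eeq
In both cases, $N'_{\sigma,\tau}$ restricted to the coset $i + m\W$ is a polynomial in $k$, so $N'_{\sigma,\tau}$ itself agrees with a quasipolynomial $Q'_{\sigma,\tau}$ of period $m$ for $k \gg 0$.

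Finally I would control the degree. For each residue class the polynomials $R_i(k)$ and $R_{i-1}(k-1)$ both have degree exactly $D$ in $k$, and their leading coefficients agree (both equal to $(V_{s/d,\,t/d})^d$, since shifting $k \mapsto k-1$ does not alter the leading coefficient of a polynomial). Hence the difference $R_i(k) - R_{i-1}(k-1)$ is a polynomial of degree at most $D-1$. This holds on every residue class modulo $m$, so $Q'_{\sigma,\tau}$ has degree strictly less than $D = \tfrac{1}{d}(s-d)(t-d)$, as claimed. There is no real obstacle here; the only thing to be careful about is the wrap-around case $i=0$, which is handled the same way since the leading coefficients of $R_0$ and $R_{m-1}$ coincide.
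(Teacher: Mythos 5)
Your proposal is correct and is exactly the argument the paper intends: it writes $N'_{\sigma,\tau}(k)=N_{\sigma,\tau}(k)-N_{\sigma,\tau}(k-1)$ and leaves the deduction from Theorem \ref{not.prime.intro} implicit, while you fill in the residue-class bookkeeping and the cancellation of the common leading coefficient $\left(V_{s/d,\,t/d}\right)^{d}$. Nothing is missing; the wrap-around case $i=0$ is handled correctly since all constituent polynomials share the same leading coefficient.
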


 \bibliographystyle{alpha}
\bibliography{References}
\end{document}